\documentclass[12pt]{amsart}
\usepackage{fullpage}
\usepackage[all]{xy}
\usepackage{hyperref, amscd, amssymb, amsmath, amsthm, graphics}
\usepackage{amsmath,amsfonts,amsthm,amssymb}
\usepackage{latexsym,amsmath}
\usepackage{graphicx,psfrag}
\usepackage{pinlabel}
\usepackage{mathrsfs}
\usepackage{xcolor}

 \pagestyle{headings}
 \setlength{\headheight}{6.5pt}
 \setlength{\headsep}{0.5cm}

\newtheorem{theorem}{Theorem}[section]
\newtheorem{proposition}[theorem]{Proposition}
\newtheorem{corollary}[theorem]{Corollary}
\newtheorem{lemma}[theorem]{Lemma}
\newtheorem{claim}[theorem]{Claim}
\newtheorem{assumption}[theorem]{Assumption}

\theoremstyle{definition}
\newtheorem{definition}[theorem]{Definition}

\theoremstyle{remark}
\newtheorem{remark}[theorem]{Remark}

\newtheorem{question}{Question}
\numberwithin{equation}{section}
\newcommand{\bZ}{\mathbb Z}
\newcommand{\bR}{\mathbb R}

\newcommand{\bH}{\mathbb H}
\newcommand{\mminus}{\backslash\backslash}
\newcommand{\Mod}{\operatorname{Mod}}
\newcommand{\Homeo}{\operatorname{Homeo}}
\newcommand{\from}{\colon\thinspace}

\newcommand{\mc}{\mathcal}
\newcommand{\Interior}[1]{\mathring{#1}}
  
\begin{document}
\sloppy

\title[Pseudo-Anosov surfaces and Dynamics in 3-manifolds]
{Pseudo-Anosov surfaces and Dynamics in 3-manifolds}

\author{Jason Manning}
\address{Department of Mathematics, Cornell University, Ithaca, NY 14853, USA}
\email{jfmanning@cornell.edu}
\author{Christoforos Neofytidis }
\address{Department of Mathematics and Statistics, University of Cyprus, Nicosia 1678, Cyprus}
\email{neofytidis.christoforos@ucy.ac.cy}

\subjclass[2010]{57M50, 57N37, 57R22, 51H20}
\keywords{pseudo-Anosov surface, partially pseudo-Anosov dynamics, mapping torus, I-bundle}

\date{\today}

\begin{abstract}
  We determine which closed orientable $3$--manifolds $M$ admit a self-homeomorphism restricting to a pseudo-Anosov map on an incompressible subsurface $\Sigma$, which we call a \emph{pseudo-Anosov surface}.  
  When $M$ is irreducible, we show that the self-homeomorphism of $M$ is isotopic rel $\Sigma$ to a ``partially pseudo-Anosov'' homeomorphism, a notion that we will introduce.  This is motivated by the corresponding results for Anosov tori in irreducible $3$--manifolds, and the connection to partially \emph{hyperbolic} diffeomorphisms, obtained by F. Rodriguez-Hertz, J. Rodriguez-Hertz and R. Ures.
\end{abstract}

\maketitle

\section{Introduction}

A self-diffeomorphism $f$ of a manifold $M$ is \emph{hyperbolic} or \emph{Anosov} if the tangent bundle $TM$ splits into two $f$--invariant sub-bundles (the \emph{stable} and \emph{unstable} sub-bundles) in such a way that the derivative of the diffeomorphism is uniformly contracting on the stable sub-bundle and uniformly expanding on the unstable sub-bundle.  The diffeomorphism $f$ is said to be \emph{partially hyperbolic} if $TM$ splits into three sub-bundles, which are a stable and unstable bundle as above, and a \emph{central} bundle on which the derivative behaves in a way which is intermediate between the behavior on the stable and unstable bundles.  

Rodriguez-Hertz--Rodriguez-Hertz--Ures~\cite{RHRHU} 
showed that if $M$ is an irreducible $3$--manifold, and $f$ is a self-diffeomorphism of $M$ which restricts to an Anosov map on some embedded torus $T$ (such $T$ is called an \emph{Anosov torus}), then $T$ must be incompressible and $M$ must be the mapping torus of $T$.  In this case, the monodromy of the mapping torus is itself either an Anosov map or $\pm id_T$.  Moreover, their argument implies that $f$ is homotopic rel $T$ to a partially hyperbolic diffeomorphism.
In particular, the hyperbolic dynamics on $T$ can be extended to partially hyperbolic dynamics on the entire $3$--manifold.  Conversely, the fiber $T$ of each of the above mapping tori $M$ is an {\em Anosov torus} in $M$.

In this paper we investigate a similar phenomenon where $T$ is replaced by a higher genus surface $\Sigma$, and Anosov maps are replaced by pseudo-Anosov maps. This leads us to introduce the concept of {\em partially pseudo-Anosov homeomorphism} (a precise definition will be given in Section \ref{sec:definitions}) in analogy to partially hyperbolic homeomorphism when dealing with tori. 
Our main result is the following:

\begin{theorem}\label{thm:classify}
  Let $M$ be a closed orientable irreducible $3$--manifold. There is a homeomorphism $f\from M\to M$ which restricts to a pseudo-Anosov map on an invariant embedded incompressible two-sided surface $\Sigma$ if and only if
   \begin{enumerate}
    \item\label{itm:pamt} $M$ is the mapping torus of a pseudo-Anosov map $\varphi\from \Sigma\to\Sigma$; or 
    \item\label{itm:sfs} $M$ is Seifert fibered with large base orbifold and $\Sigma$ is horizontal.
 \end{enumerate}
 Any such $f$ is isotopic rel $\Sigma$ to a partially pseudo-Anosov homeomorphism.
 \end{theorem}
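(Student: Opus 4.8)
The plan is to prove both directions of the equivalence and then, on top of the structural classification, upgrade any given $f$ to a partially pseudo-Anosov homeomorphism by an isotopy rel $\Sigma$. I would organize the argument around the cut-open manifold $M \mminus \Sigma$ (the manifold obtained by splitting $M$ along $\Sigma$). Since $\Sigma$ is incompressible and two-sided, each component of $M \mminus \Sigma$ is an irreducible $3$--manifold with incompressible boundary, and $f$ either preserves the two sides of $\Sigma$ or swaps them; replacing $f$ by $f^2$ if necessary, I may assume the sides are preserved, so $f$ induces a homeomorphism of $M \mminus \Sigma$. The key input is that a pseudo-Anosov map has no nontrivial reduction and acts on the curve complex of $\Sigma$ with unbounded orbits, which rigidly constrains how $\Sigma$ can sit in $M$: any essential annulus or torus meeting $\Sigma$ would force $f$ (or an iterate) to preserve a multicurve, contradicting pseudo-Anosov-ness. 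This is the mechanism by which the torus-case dichotomy of Rodriguez-Hertz--Rodriguez-Hertz--Ures generalizes.

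\textbf{Reverse direction.} In case \eqref{itm:pamt}, $M$ is the mapping torus of $\varphi$, and the natural monodromy homeomorphism realizes $\varphi$ on the fiber $\Sigma$; composing with the suspension flow direction, this $f$ is already essentially in "partially pseudo-Anosov" form, with the flow direction playing the role of the central bundle. In case \eqref{itm:sfs}, a Seifert fibered space with large base orbifold and horizontal surface $\Sigma$ is finitely covered by a surface bundle, and the monodromy of that bundle is (isotopic to) a pseudo-Anosov composed with a periodic rotation coming from the Seifert structure; the relevant $f$ can be taken to be a fiber-preserving homeomorphism that is pseudo-Anosov on $\Sigma$, using the Nielsen--Thurston decomposition of the induced mapping class.

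\textbf{Forward direction.} Given $f$ pseudo-Anosov on an incompressible two-sided $\Sigma$, I first argue that $\Sigma$ is non-separating or, if separating, that the two complementary pieces are interchanged; in all cases, after passing to an iterate, I get a homeomorphism of a compact $3$--manifold $N = M \mminus \Sigma$ with two boundary copies $\Sigma_\pm$ of $\Sigma$, together with a gluing. The pseudo-Anosov hypothesis forces the geometry of $N$: by the JSJ/geometrization machinery, $N$ is either an $I$--bundle over $\Sigma$ (yielding the Seifert case, since then $M$ is a Seifert fibered space with $\Sigma$ horizontal and the base orbifold is large because $\chi(\Sigma)<0$), or $N$ is a product $\Sigma\times[0,1]$ glued by $\varphi$ (yielding the mapping torus case), or $N$ has an essential annulus/torus — which I rule out because it would produce an $f$--periodic essential multicurve on $\Sigma$, contradicting that $f|_\Sigma$ is pseudo-Anosov. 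Distinguishing the $I$--bundle subcase from the product subcase is exactly the twisted-versus-trivial $I$--bundle alternative, corresponding to whether $\Sigma$ is one- or two-sided in the closed manifold built from $N$; here $\Sigma$ two-sided in $M$ forces the relevant structure.

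\textbf{Upgrading to partially pseudo-Anosov.} Finally, in each of the two cases I produce the isotopy rel $\Sigma$. In the mapping torus case, I isotope $f$ along the $[0,1]$--fibers of $\Sigma\times[0,1] = M\mminus\Sigma$ so that it becomes the suspension monodromy on the nose — this is a standard "straightening" using that $f$ agrees with $\varphi$ on $\Sigma$ and that $\mathrm{Homeo}(\Sigma\times[0,1]\ \mathrm{rel}\ \Sigma\times\{0,1\})$ is connected by Alexander-type arguments — and the suspension flow direction supplies the central foliation while the stable/unstable foliations of $\varphi$, spread across the fibers, supply the stable/unstable directions. In the Seifert case, I isotope $f$ rel $\Sigma$ to be fiber-preserving; the Seifert circle direction is the central direction, and again the pseudo-Anosov foliations on the horizontal surface pull back to the transverse directions. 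The main obstacle, and the technical heart of the paper, is the forward direction's exclusion of essential annuli and tori in $M\mminus\Sigma$: one must show carefully that the pseudo-Anosov dynamics on $\Sigma$ is genuinely incompatible with any such essential surface, using the action on measured foliations / the curve complex and the fact that $f$ must permute the JSJ pieces of $M\mminus\Sigma$ and hence fixes (up to iterate) the boundary pattern they cut on $\Sigma$.
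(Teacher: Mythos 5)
Your outline has the right global shape (cut along $\Sigma$, analyze $M\mminus\Sigma$, straighten $f$ on the complement), but the forward direction as you describe it has two genuine gaps. First, your trichotomy for $N=M\mminus\Sigma$ --- ``$I$--bundle, or product glued by $\varphi$, or contains an essential annulus/torus which I rule out'' --- is not coherent: a product $\Sigma\times[0,1]$ \emph{is} an $I$--bundle, and $I$--bundles over higher genus surfaces contain many essential (vertical) annuli, so ``excluding essential annuli and tori in $M\mminus\Sigma$'' cannot be the goal. The actual argument (Proposition~\ref{prop:I-bundle}) runs through the characteristic submanifold $\chi(\hat M)$: the frontier curves of $\chi$ on $\partial\hat M$ would be a multicurve invariant under a pseudo-Anosov, hence empty, and a boundary component \emph{not} contained in $\chi$ would lie in a simple piece, whose mapping class group is finite by Johannson --- contradicting that $\hat f^2$ restricted to that boundary component has infinite order. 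Your ``invariant multicurve'' heuristic captures only the first half; the finiteness-of-$\Mod$ argument for simple pieces is the step that actually forces $\partial\hat M\subset\chi$ and hence the $I$--bundle conclusion.

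Second, and more seriously, knowing that $M$ is a mapping torus $M_\varphi$ (or a semi-bundle) does not yet place $M$ in case~\eqref{itm:pamt} or~\eqref{itm:sfs}: you must control the monodromy $\varphi$ (resp.\ the gluing involutions). The paper's key tool here is McCarthy's theorem that the normalizer of a pseudo-Anosov class $[\psi^2]$ in $\Mod^\pm(\Sigma)$ is virtually cyclic and is realized inside the stabilizer of the invariant foliations. Since $[\varphi]$ normalizes $[\psi^2]$ (Lemma~\ref{lem:commute}), $\varphi$ cannot be infinite-order reducible; it is periodic or pseudo-Anosov, and in the periodic case the quotient orbifold $\Sigma/\langle\varphi\rangle$ inherits the invariant foliations and is therefore \emph{large}. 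Nothing in your proposal rules out infinite-order reducible monodromy or establishes largeness of the base orbifold, and the same mechanism is needed in the separating case to choose the involutions $\tau_{\pm1}$ commuting appropriately with $\psi$. Relatedly, your reverse direction for case~\eqref{itm:sfs} never uses the largeness hypothesis: the construction requires choosing a pseudo-Anosov on the base orbifold, lifting a power of it to $\Sigma$ so that it commutes with the deck group of the product cover $\Sigma\times S^1\to M$, and then extending by the identity in the fiber direction; ``the monodromy of that bundle is a pseudo-Anosov composed with a periodic rotation'' is not correct (that monodromy is periodic). Finally, for the straightening step, the connectivity of $\Homeo(\Sigma\times I\ \mathrm{rel}\ \partial)$ is not an Alexander-trick statement for genus $\ge 2$; one needs Waldhausen's homotopy-implies-isotopy theorem, together with an equivariant refinement in the separating case so that the isotopy descends to the twisted $I$--bundles.
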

For us an orbifold is \emph{large} if it admits a pseudo-Anosov automorphism.  See Subsection~\ref{ss:large} for more detail.
 \begin{remark}
   Suppose a self-homeomorphism $f$ of $M$ restricts to a pseudo-Anosov map on some \emph{one-sided} surface $\Sigma$.  Let $N$ be a regular neighborhood of $\Sigma$.  Then $f$ is isotopic (via an isotopy supported near $N$) to a map $f'$ which restricts to a pseudo-Anosov map on the two-sided surface $\partial N$.
 \end{remark}

When a surface $\Sigma$ as in Theorem \ref{thm:classify} exists, we will say that $M$ admits a {\em pseudo-Anosov surface} and that $f$ \emph{realizes} $\Sigma$. From a topological point of view, our motivation comes from the work by Wang and the second author~\cite{NeoWang}, where it was shown that if a reducible 3-manifold $M$ contains an incompressible surface which is invariant under a homeomorphism $f$ of $M$, then this surface lies in a prime summand preserved by a homeomorphism isotopic to $f$. Whether this was the case for Anosov tori had been asked  by Rodriguez-Hertz--Rodriguez-Hertz--Ures~\cite{RHRHU}. 
Combining~\cite{NeoWang} with Theorem \ref{thm:classify}, we obtain a complete list of 3-manifolds that admit 
a pseudo-Anosov surface:

\begin{corollary}\label{c:main}
A closed orientable reducible 3-manifold admits 
a pseudo-Anosov surface if and only if one of its prime summands belongs to (1) or (2) as given in Theorem \ref{thm:classify}.
\end{corollary}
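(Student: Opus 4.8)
The plan is to derive the corollary from Theorem~\ref{thm:classify} together with the main result of~\cite{NeoWang}; the two implications are treated separately, and the whole argument amounts to moving a pseudo-Anosov surface between $M$ and a prime summand.

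For the ``only if'' direction, suppose $M$ is closed, orientable and reducible and that $f\from M\to M$ realizes a pseudo-Anosov surface $\Sigma$. By~\cite{NeoWang} we may isotope $f$, keeping $\Sigma$ invariant throughout, to a homeomorphism that also preserves a prime summand $P$ of $M$ with $\Sigma\subset P$; in particular $f|_\Sigma$ is still pseudo-Anosov. First I would check that $P$ is irreducible. The only closed orientable prime $3$--manifold that is not irreducible is $S^2\times S^1$, and no nontrivial closed surface group injects into $\pi_1(S^2\times S^1)\cong\bZ$, so every $\pi_1$--injective closed surface in $S^2\times S^1$ is a sphere; hence $S^2\times S^1$ carries no incompressible surface of negative Euler characteristic, and in particular no pseudo-Anosov surface. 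Since $\Sigma$ is one, $P\neq S^2\times S^1$, so $P$ is irreducible. Next, capping off the sphere along which $P$ is summed into $M$ with a ball and extending $f$ over it by coning---after a preliminary isotopy, supported in a collar of that sphere and hence rel $\Sigma$, that makes $f$ standard there---yields a self-homeomorphism of the \emph{closed} irreducible manifold $P$ restricting to a pseudo-Anosov map on the incompressible two-sided surface $\Sigma\subset P$; incompressibility of $\Sigma$ in $P$ follows from its incompressibility in $M$ by an innermost-disk argument. Theorem~\ref{thm:classify} then places $P$ in class~(1) or~(2), which is what we want.

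For the ``if'' direction, suppose some prime summand $Q$ of $M$ lies in class~(1) or~(2). By Theorem~\ref{thm:classify} there are a self-homeomorphism $g$ of $Q$ and a connected incompressible two-sided surface $\Sigma\subset Q$ with $g(\Sigma)=\Sigma$ and $g|_\Sigma$ pseudo-Anosov---one may take $\Sigma$ to be the fibre if $Q$ is a mapping torus, or a connected horizontal surface if $Q$ is Seifert fibered. Replacing $g$ by $g^2$ if necessary (a power of a pseudo-Anosov map is pseudo-Anosov), we may assume $g$ is orientation-preserving and preserves a component $C$ of $Q\setminus\Sigma$. Choose a ball $B\subset C$; since $C$ is connected and orientable and $g|_C$ preserves orientation, $B$ and $g(B)$ are ambiently isotopic in $C$, so after composing $g$ with an ambient isotopy of $Q$ supported in $C$---hence rel $\Sigma$---we may assume $g(B)=B$, and then, isotoping $g|_{\partial B}$ to the identity within $B$ and applying the Alexander trick, that $g$ is the identity on $\partial B$, none of this changing $g|_\Sigma$. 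Identifying $M$ with $(Q\setminus\Interior{B})\cup_{S^2}(M'\setminus\Interior{B'})$, where $M'$ is the complementary summand and $B'\subset M'$ a ball, define $f\from M\to M$ to be $g$ on $Q\setminus\Interior{B}$ and the identity on $M'\setminus\Interior{B'}$; these agree along the summing sphere since $g$ is the identity there. Then $f(\Sigma)=\Sigma$ and $f|_\Sigma$ is pseudo-Anosov, while $\Sigma$---being incompressible in $Q$, disjoint from $B$, and two-sided---remains incompressible and two-sided in $M$ by the usual innermost-disk argument across the summing sphere. Hence $f$ realizes the pseudo-Anosov surface $\Sigma$ in $M$, completing the proof.

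The content lies entirely in~\cite{NeoWang} and Theorem~\ref{thm:classify}, so I do not expect genuine difficulty. The two points needing care are keeping every isotopy rel $\Sigma$, so that the pseudo-Anosov dynamics on $\Sigma$ is never disturbed, and the two transfers of incompressibility of $\Sigma$ between $M$ and its prime summand. The most delicate step is arranging, in the ``if'' direction, that $g$ may be taken to be the identity near a ball disjoint from $\Sigma$; this is precisely where passing to $g^2$ and choosing $B$ inside a $g$--invariant component of $Q\setminus\Sigma$ come in.
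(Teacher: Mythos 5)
Your proof is correct and takes essentially the same route as the paper: the ``only if'' direction via the McCullough/Neofytidis--Wang result that an invariant incompressible surface can be pushed into a prime summand preserved by a suitably modified homeomorphism, and the ``if'' direction by arranging the realizing homeomorphism of the summand to be the identity on a ball disjoint from $\Sigma$ and extending by the identity across the connected sum. The one imprecision is describing the modified map as an isotopy of $f$---what \cite{NeoWang} actually provides is $hf$ with $h$ a composition of slide homeomorphisms and isotopies, so it need not be isotopic to $f$---but since $h$ restricts to the identity on $\Sigma$, the map still restricts to a pseudo-Anosov there and your argument is unaffected.
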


The hypothesis of incompressibility for Anosov tori was not present in the work of Rodriguez-Hertz--Rodriguez-Hertz--Ures (because, as mentioned above, Anosov tori must be incompressible), but it is essential here.  If we remove the incompressibility requirement on the pseudo-Anosov surface, no conclusions about the $3$--manifold can be obtained.  Indeed there are homeomorphisms of $S^3$ restricting to a pseudo-Anosov map of a genus $2$ Heegaard surface.  Thus, for any $3$--manifold, there is a self-homeomorphism which is arbitrarily close to the identity, but which restricts to a pseudo-Anosov map of a compressible subsurface; see Section \ref{sec:compressible}.

\subsection*{Outline of the paper}
In Section \ref{sec:definitions} we give formal definitions and some background on the proposed notion of partially pseudo-Anosov dynamics. 
In Section \ref{part1} we prove the major part of Theorem \ref{thm:classify}, showing that the existence of a pseudo-Anosov surface $\Sigma$ in an irreducible 3-manifold implies that the 3-manifold must belong to \eqref{itm:pamt} or \eqref{itm:sfs} as in Theorem \ref{thm:classify}, and that any map that realizes $\Sigma$ as a pseudo-Anosov surface
is isotopic relative to $\Sigma$ to a partially pseudo-Anosov homeomorphism. In Section \ref{sec:existence} we complete the proof of Theorem \ref{thm:classify} by giving examples of pseudo-Anosov surfaces in manifolds as in \eqref{itm:pamt} and \eqref{itm:sfs} of Theorem \ref{thm:classify}.
In Section \ref{sec:reducible} we discuss briefly the case of reducible 3-manifolds (Corollary \ref{c:main}). Finally, in Section \ref{sec:compressible} we give examples of compressible pseudo-Anosov surfaces in all 3-manifolds.

\subsection*{Acknowledgments}
We thank Brian Bowditch, Kathryn Mann, Dan Margalit, Federico Rodriguez-Hertz, Saul Schleimer and Bena Tshishiku for useful conversations.  Thanks to Shicheng Wang for a helpful comment on the work of Bonahon.
C. Neofytidis gratefully acknowledges the hospitality of Cornell University, MPIM Bonn, the University of Geneva and the University of Warwick, where parts of this project were carried out.  J. Manning was partially supported by Simons Collaboration Grants \#942496 and \#524176.

\section{Background}\label{sec:definitions}

We first recall some definitions from smooth dynamics, and propose the notion of partially pseudo-Anosov homeomorphism.

\begin{definition}
  Suppose $M$ is a Riemannian manifold.  A diffeomorphism $\alpha\from M\to M$ is \emph{hyperbolic} if there is an $\alpha$--invariant splitting of the tangent bundle $TM = E^s\oplus E^u$ and some $n>0$ so that for any unit vectors $v^s\in E^s, v^u\in E^u$ we have
  \[ \|D \alpha^n(v^s)\| \le \frac{1}{2} \quad\mbox{and}\quad \|D \alpha^n(v^u)\| \ge 2.\]
  The bundles $E^s$ and $E^u$ are called the \emph{stable} and \emph{unstable} sub-bundles of $TM$, respectively.

  The diffeomorphism is \emph{partially hyperbolic} if there is a splitting of the tangent bundle into \emph{three} invariant sub-bundles $TM = E^s\oplus E^c \oplus E^u$, and some $n>0$ so that for any unit vectors $v^s\in E^s, v^c\in E^s, v^u\in E^u$ we have
  \[ \|D \alpha^n(v^s)\|\le \frac{1}{2}\min\{1,\|D\alpha^n(v^c)\|\}\quad\mbox{and}\quad 
\|D \alpha^n(v^u)\|\ge 2\max\{1,\|D\alpha^n(v^c)\|\}.\]
\end{definition}
Hyperbolicity has a number of equivalent definitions in the literature.  Ours is taken from~\cite{BFP23}.  See \cite{CHHU} for more details.

\begin{definition}A homeomorphism $\psi$ of a surface is called \emph{pseudo-Anosov} if it preserves a pair of (singular) transverse measured foliations $(\mathcal{F}^s,\mu^s)$ and $(\mathcal{F}^u,\mu^u)$, and there is a number $\lambda>1$ so that for any arc $\alpha$ transverse to $\mathcal{F}^s$, we have $\mu^s(\psi(\alpha)) = \lambda^{-1}\mu^s(\alpha)$ and for any arc $\beta$ transverse to $\mathcal{F}^u$ we have $\mu^u(\psi(\beta)) = \lambda \mu^u(\beta)$.
\end{definition}

  A pseudo-Anosov map is not $C^1$ at the singularities, despite the common terminology ``pseudo-Anosov diffeomorphism'' (see \cite[p. 175]{FLP}).  However, away from the singularities it is a (hyperbolic) diffeomorphism.  

\begin{definition}\label{def:pA}
 An embedded closed orientable two-sided incompressible surface $\Sigma$ in a $3$--manifold $M$ is called {\em pseudo-Anosov} if there exists a homeomorphism $f$ of $M$ such that $f(\Sigma)=\Sigma$ and $f\vert_\Sigma$ is a pseudo-Anosov map.  
 In that case, we say that $\Sigma$ is \emph{realized by $f$} and that $M$ admits a pseudo-Anosov surface.
\end{definition}

 Theorem \ref{thm:classify} classifies the pairs $(M,\Sigma)$ where $M$ is an irreducible $3$--manifold and $\Sigma$ is 
 a pseudo-Anosov surface in $M$.  In case it is, we show that there is a realizing homeomorphism $f$ which extends that dynamics throughout $M$ and has the same relationship to ``pseudo-Anosov dynamics'' as a partially hyperbolic automorphism has to hyperbolic dynamics.  We propose the following definition to capture what we mean by this.

\begin{definition}\label{def:ppa}
  A homeomorphism $f\from M\to M$ is \emph{partially pseudo-Anosov} if there is an invariant embedded $1$-manifold $C \subset M$ so that $f$ is partially hyperbolic on the complement of $C$.
\end{definition}

The following result of McCarthy will be important
 for both directions of our main theorem. 
 We thank Bena Tshishiku for pointing out that the statement we give here follows from McCarthy's proof.

 \begin{theorem}\label{thm:mccarthy}\cite{McC}
  Let $\psi\from \Sigma\to \Sigma$ be pseudo-Anosov, and let $\mc{F}^s,\mc{F}^u$ be the stable and unstable foliations of $\psi$.
  Let $N$ be the normalizer of $[\psi]$ in $\Mod^\pm(\Sigma)$, and let $\mc{G}$ be the stabilizer of the pair $\{\mc{F}^s,\mc{F}^u\}$ in $\Homeo(\Sigma)$.  The projection $\Homeo(\Sigma)\to \Mod^\pm(\Sigma)$ restricts to an injection $\mc{G}\to \Mod^{\pm}(\Sigma)$ whose image contains $N$.
  Moreover
  \[ [N:\langle [\psi]\rangle] \le [\mc{G}:\langle \psi\rangle] <\infty .\]
\end{theorem}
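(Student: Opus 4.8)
The plan is to establish the three assertions separately — \textbf{(a)} $\Homeo(\Sigma)\to\Mod^\pm(\Sigma)$ is injective on $\mc G$, \textbf{(b)} its image on $\mc G$ contains $N$, \textbf{(c)} $[\mc G:\langle\psi\rangle]<\infty$ — and then assemble the index inequality. I would use throughout two classical facts about a pseudo-Anosov map $\psi$: the unordered pair $\{[\mc F^s],[\mc F^u]\}\subset\PMF(\Sigma)$ is an invariant of the isotopy class $[\psi]$, and each of $\mc F^s,\mc F^u$ is uniquely ergodic, so that any homeomorphism preserving $\mc F^s$ as a foliation satisfies $h_*\mu^s=c^s(h)\,\mu^s$ for a constant $c^s(h)>0$ (and similarly $c^u(h)>0$ for $\mc F^u$). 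It is convenient to fix the singular flat structure $q$ with horizontal and vertical measured foliations $(\mc F^s,\mu^s)$ and $(\mc F^u,\mu^u)$; let $X_0$ be the Riemann surface underlying $q$ and $A\subset\mc T(\Sigma)$ the Teichm\"uller geodesic it determines, along which $[\psi]$ translates by $\log\lambda$. Note $\psi\in\mc G$ with $c^u(\psi)=\lambda$.

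\textbf{(a) Injectivity.} Let $h\in\mc G$ with $[h]=1$. Since $[\mc F^s]\ne[\mc F^u]$ in $\PMF(\Sigma)$ and $[h]=1$ acts trivially there, $h$ cannot swap the two foliations, so it preserves each; writing $h_*\mu^s=c^s\mu^s$ and picking a simple closed curve $\gamma$ with $i(\gamma,\mc F^s)>0$ ($\mc F^s$ is filling), naturality of the geometric intersection number and $h\simeq\mathrm{id}$ give $i(\gamma,\mc F^s)=i(h\gamma,h_*\mc F^s)=c^s\,i(\gamma,\mc F^s)$, so $c^s=1$, and likewise $c^u=1$. Thus $h$ is an isometry of the flat metric underlying $q$, in particular a conformal or anticonformal automorphism of $X_0$; since $[h]=1$ it acts trivially on $\pi_1(\Sigma)$ up to conjugacy, which forces $h=\mathrm{id}$ as $\chi(\Sigma)<0$. (Equivalently, one may invoke the classical fact that a homeomorphism preserving a filling pair of measured foliations together with their transverse measures and isotopic to the identity is the identity, see \cite{FLP}.)

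\textbf{(b) and (c).} Given $[g]\in N$, so $[g][\psi][g]^{-1}=[\psi]^{\pm1}$: replacing $g$ by a homeomorphism in its isotopy class, we may assume $g$ carries the invariant foliations of $\psi$ to those of $\psi^{\pm1}$, i.e.\ $g$ maps the filling pair $\{\mc F^s,\mc F^u\}$ to itself, so $g\in\mc G$ and $[g]$ lies in the image of $\mc G$; this uses the uniqueness of the pseudo-Anosov representative of a mapping class (up to conjugacy by a homeomorphism isotopic to the identity). For the index, pass to the index--$\le2$ subgroup $\mc G_0\le\mc G$ preserving each foliation. Since a homeomorphism preserves total $q$--area, which $h_*$ scales by $c^s(h)c^u(h)$, we get $c^s(h)c^u(h)=1$ for $h\in\mc G_0$; hence $h$ pushes $q$ to $\mathrm{diag}(c^u(h),c^u(h)^{-1})\cdot q$, so $[h]$ preserves $A$ and translates along it by $\log c^u(h)$. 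Because $\mc G_0\hookrightarrow\Mod^\pm(\Sigma)$ by (a) and $\Mod^\pm(\Sigma)$ acts properly discontinuously on $\mc T(\Sigma)$, only finitely many elements of $\mc G_0$ move a point of $A$ by a bounded amount; thus $\{\log c^u(h):h\in\mc G_0\}$ is a discrete subgroup of $\bR$ — infinite cyclic, as it contains $\log\lambda\ne0$ — with finite kernel (the stabilizer of $[X_0]$ in $\mc G_0$, finite by properness again). Hence $[\mc G_0:\langle\psi\rangle]<\infty$, and $[\mc G:\langle\psi\rangle]\le2[\mc G_0:\langle\psi\rangle]<\infty$.

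\textbf{Assembling the inequality, and the main difficulty.} By (a), $\mc G\hookrightarrow\Mod^\pm(\Sigma)$ restricts to a bijection of $\mc G_N:=\{h\in\mc G:[h]\in N\}$ onto $N$ (using (b)), carrying $\psi$ to $[\psi]$; since $[\psi]$ has infinite order, $\langle\psi\rangle$ maps isomorphically onto $\langle[\psi]\rangle$, so $[N:\langle[\psi]\rangle]=[\mc G_N:\langle\psi\rangle]\le[\mc G:\langle\psi\rangle]<\infty$. I expect the genuine content to lie in two rigidity inputs rather than the bookkeeping: for (a), that a self-homeomorphism preserving the filling foliation pair \emph{with} its transverse measures and isotopic to the identity must be trivial; and for (c), the identification of $\mc G_0$ with a group acting on the Teichm\"uller axis $A$ by translations of length $|\log c^u(h)|$, so that proper discontinuity of the $\Mod^\pm(\Sigma)$--action on $\mc T(\Sigma)$ forces discreteness of the scaling factors. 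The careful bookkeeping of that last identification — in particular checking that $h\mapsto\log c^u(h)$ really is the translation-length homomorphism along $A$, which is where the constraint $c^s c^u=1$ enters — is the step I would watch most closely.
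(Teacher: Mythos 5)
Your argument is correct, but it takes a different route from the paper: the paper's ``proof'' of Theorem~\ref{thm:mccarthy} is essentially an assembly of citations to McCarthy's preprint (his Lemma~1 for realizing normalizer elements inside $\mc{G}$, Lemma~4 for the injectivity of $\mc{G}\to\Mod^\pm(\Sigma)$, Lemmas~2--3 and Theorem~1 for virtual cyclicity), whereas you give a self-contained proof. Your three ingredients --- unique ergodicity of $\mc F^s,\mc F^u$ to get the scaling constants $c^s(h),c^u(h)$; the area constraint $c^s c^u=1$ identifying $h\mapsto\log c^u(h)$ with the translation length along the Teichm\"uller axis $A$, so that proper discontinuity of the $\Mod^\pm(\Sigma)$--action forces discreteness; and rigidity of isometries of the flat structure $q$ that are isotopic to the identity --- are in the same spirit as McCarthy's original argument, so you have effectively reconstructed the cited source rather than found a new mechanism. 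What your version buys is independence from an unpublished preprint; what it costs is that several of your ``classical facts'' are exactly the nontrivial content being outsourced: unique ergodicity of the invariant foliations, uniqueness of the pseudo-Anosov representative of a class up to conjugacy by a homeomorphism isotopic to the identity (needed in your step (b)), and the fact that a nontrivial finite-order homeomorphism of a surface with $\chi<0$ is never isotopic to the identity (needed to finish your step (a)). These are all standard and correctly invoked, and the bookkeeping you flagged as delicate --- that $c^s c^u=1$ makes $[h]$ translate $A$ by $\log c^u(h)$, including for orientation-reversing $h$ --- does go through, so there is no gap.
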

\begin{proof}
  The fact that $N$ is virtually generated by $[\psi]$ is part of \cite[Theorem 1]{McC}.

  The rest of the statement follows from McCarthy's proof.  Indeed,
  \cite[Lemma 1]{McC} shows that each $s\in N$ is realized by a homeomorphism $\tilde{s}$ either preserving or exchanging the stable and unstable foliations $\mathcal{F}^s$ and $\mathcal{F}^u$ of $\psi$. 
  
  In particular, any element $s\in N$ is realized by $\tilde{s}\in \Homeo(\Sigma)$ which lies in the subgroup $\mathcal{G}$ preserving the pair of foliations $\{\mathcal{F}^u,\mathcal{F}^s\}$.  McCarthy's \cite[Lemma 4]{McC} says
  that the map from $\Homeo(\Sigma)$ to $\Mod(\Sigma)$ restricts to an injection on $\mathcal{G}$.  (That lemma is stated for orientation preserving maps, but the proof does not use the orientation.)
  It follows that the element $\tilde{s}$ is uniquely determined by $s$, and so $\mathcal{G}$ contains the desired realization of $N$.  McCarthy's Lemmas~\cite[Lemma 2, Lemma 3]{McC} show that $\mc{G}$ is virtually cyclic.
\end{proof}

\subsection{Large orbifolds}\label{ss:large}

 As noted in the introduction, we call a $2$--orbifold $\mathcal{O}$ \emph{large} if there exists a pseudo-Anosov map from $\mathcal{O}$ to itself.
  We only discuss $2$--orbifolds with isolated orbifold locus (i.e. no \emph{mirrors}) and compact underlying surface.  If $\mathcal O$ is such a $2$--orbifold, let $F_{\mathcal O}$ be the (punctured) surface obtained by deleting the orbifold locus. 
  The orbifold $\mathcal O$ admits a pseudo-Anosov self-map if and only if the surface $F_{\mathcal O}$ admits such a map.  It is well-known that a finite type orientable surface $F$ admits a pseudo-Anosov map if and only if it is either a punctured torus, or has Euler characteristic at most $-2$.  Slightly less well-known is the fact that a finite type non-orientable surface $F$ admits a pseudo-Anosov map if and only if its Euler characteristic is at most $-2$ (see \cite[Table 3]{Strenner}).

\begin{lemma}\label{lem:descends}
  Let $\psi\from \Sigma\to\Sigma$ be a pseudo-Anosov map, and let $G$ be a finite subgroup of the centralizer of $\psi$ in $\mathcal{G}$, the subgroup of $\Homeo(\Sigma)$ described in Theorem~\ref{thm:mccarthy}.  Then the quotient orbifold $\mathcal{O} = \Sigma/G$ is large.
\end{lemma}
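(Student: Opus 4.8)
The plan is to pass to the punctured surface $F_{\mathcal{O}}$ and show it admits a pseudo-Anosov map, using the Euler characteristic criterion recalled above. First I would observe that since $G$ is a finite group of homeomorphisms of $\Sigma$ commuting with $\psi$, the quotient $\mathcal{O} = \Sigma/G$ is a closed $2$--orbifold whose orbifold locus is the image of the (finite) set of points with nontrivial $G$--stabilizer; by averaging a metric we may assume $G$ acts by isometries, so the orbifold locus is isolated and $\mathcal{O}$ has no mirrors (since $G$ preserves orientation — note $G\subset \mathcal{G}\subset\Homeo(\Sigma)$, and we should check orientation is preserved, or simply allow the non-orientable case, for which the criterion still applies). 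The underlying surface of $\mathcal{O}$ is compact, so $\mathcal O$ is of the type considered in Subsection~\ref{ss:large}.

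Next I would produce the pseudo-Anosov map on $\mathcal{O}$. Since each element of $G$ commutes with $\psi$, the map $\psi$ descends to a homeomorphism $\bar\psi\from \mathcal{O}\to\mathcal{O}$. Because $G\subset\mathcal{G}$, every $g\in G$ preserves the pair $\{\mathcal{F}^s,\mathcal{F}^u\}$; since $G$ is finite and $\psi$ multiplies the transverse measures by $\lambda^{\pm1}$, no nontrivial power of an element of $G$ can exchange the foliations while scaling consistently, so in fact each $g\in G$ preserves each of $\mathcal{F}^s$ and $\mathcal{F}^u$ individually and preserves the transverse measures $\mu^s,\mu^u$ (a finite-order measure-preserving homeomorphism must have scaling factor $1$). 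Hence the foliations and transverse measures descend to singular measured foliations $\bar{\mathcal{F}}^s,\bar{\mathcal{F}}^u$ on $\mathcal{O}$, and $\bar\psi$ scales them by $\lambda^{\mp1}$; away from the (finite) branch locus and the singular points this exhibits $\bar\psi$ as pseudo-Anosov on $\mathcal{O}$, equivalently on $F_{\mathcal{O}}$.

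Finally I would rule out the degenerate possibilities using Euler characteristic. Quotienting multiplies Euler characteristic by at most $1/|G|$ and the branch/puncture corrections only decrease it further, so $\chi(F_{\mathcal{O}}) \le \chi(\Sigma)/|G| < 0$; one then checks $F_{\mathcal O}$ is not a sphere, disk, annulus, or the small exceptional surfaces excluded by the criterion. Alternatively, and more robustly, one can simply invoke that $\bar\psi$ (or a power of it) is genuinely pseudo-Anosov on $F_{\mathcal O}$ — a surface carrying a pseudo-Anosov map automatically satisfies the Euler characteristic bound — so $\mathcal O$ is large by definition.

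The main obstacle is the bookkeeping around the foliations: one must be careful that no element of $G$ exchanges $\mathcal F^s$ and $\mathcal F^u$ (which would still allow a finite quotient but would require replacing $G$ by an index-two subgroup, or $\psi$ by $\psi^2$), and that the transverse measures genuinely descend rather than merely the underlying foliations. Establishing that finite-order elements of $\mathcal G$ act with trivial dilatation, and hence fix the measured foliations on the nose, is the technical heart; everything else is routine orbifold Euler characteristic computation.
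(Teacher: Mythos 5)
Your proposal is correct and follows essentially the same route as the paper: $\psi$ descends to the quotient because $G$ commutes with it, the (measured) foliations descend because centralizing elements of $\mathcal{G}$ preserve the ordered pair $(\mathcal{F}^s,\mathcal{F}^u)$ and finite-order elements cannot scale the transverse measures, and the descended map is pseudo-Anosov with the same dilatation, so $\mathcal{O}$ is large by definition. The Euler-characteristic discussion is unnecessary (as you note yourself), since largeness is defined directly by the existence of a pseudo-Anosov automorphism.
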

\begin{proof}
    First observe that $\psi$ must send $G$--orbits to $G$--orbits so it induces some automorphism $\eta$ of $\mathcal{O}$.  
  Next note that the centralizer of $\psi$ in $\mathcal{G}$ must preserve the \emph{ordered} pair $(\mathcal{F}^u,\mathcal{F}^s)$.  In particular, the stable and unstable foliations of $\psi$ descend to singular foliations on
  $\mathcal{O}$.  Moreover if $\rho$ is the dilatation of $\psi$, then $\eta$ multiplies the transverse measures on these by $\rho^{\pm 1}$.  In particular $\eta$ is pseudo-Anosov.
\end{proof}

\section{Determining 3-manifolds that can admit pseudo-Anosov surfaces}\label{part1}

The main goal of this section is to prove the following:

\begin{theorem}\label{thm:necessary}
 Let $M$ be a closed orientable irreducible $3$--manifold. If $M$ admits a pseudo-Anosov surface $\Sigma$, realized by $f\colon M\to M$, then 
   \begin{enumerate}
    \item\label{itm:pamtA} $M$ is the mapping torus of a pseudo-Anosov map $\varphi\from \Sigma\to\Sigma$ which commutes with $(f|_{\Sigma})^2$; or
    \item\label{itm:sfsB} $M$ is Seifert fibered with large base orbifold and $\Sigma$ is horizontal.
 \end{enumerate}
 Moreover, $f$ is isotopic rel $\Sigma$ to a partially pseudo-Anosov homeomorphism.
 \end{theorem}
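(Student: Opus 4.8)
The plan is to analyze the cut manifold $M \mminus \Sigma$ obtained by splitting $M$ along $\Sigma$, using the fact that $\Sigma$ is incompressible and two-sided. The homeomorphism $f$ (or $f^2$, to ensure we are not swapping the two sides if $\Sigma$ is non-separating and $f$ interchanges its normal directions — actually, since $\Sigma$ is two-sided and $f(\Sigma)=\Sigma$, some power $f^k$ preserves a co-orientation) permutes the boundary components of $M \mminus \Sigma$, each of which is a copy of $\Sigma$. I would first invoke the characteristic submanifold / $I$-bundle theory of Jaco–Shalen and Johannson: a theorem of Bonahon (this is presumably the ``work of Bonahon'' thanked for in the acknowledgments) tells us that a self-homeomorphism of a surface $\Sigma$ that extends to a homeomorphism of a compact irreducible $3$--manifold $N$ with $\partial N \supset \Sigma$, in a way that is pseudo-Anosov on $\Sigma$, forces $N$ to be very restricted — essentially $N$ must be an $I$--bundle over a surface (the twisted or product $I$-bundle), because a pseudo-Anosov map cannot be made to respect any essential annulus or subsurface structure and hence cannot respect the characteristic submanifold decomposition unless that decomposition is trivial in the strongest sense.

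Concretely, I would argue as follows. Let $N = M \mminus \Sigma$. Since $f|_\Sigma$ is pseudo-Anosov, $f$ does not preserve the isotopy class of any essential simple closed curve or essential subsurface of $\Sigma$; passing to $N$, $f$ (or a power) must preserve the characteristic submanifold $\Sigma(N)$ of the pair $(N, \partial N)$ up to isotopy. The frontier of $\Sigma(N)$ meets $\Sigma$ in a canonical (up to isotopy) collection of disjoint essential curves, which $f|_\Sigma$ must permute up to isotopy — but a pseudo-Anosov map has no periodic essential multicurves, so this collection is empty. Therefore $\Sigma(N) = N$ or $\Sigma(N) = \varnothing$; since the component of $\partial N$ that is a copy of $\Sigma$ has negative Euler characteristic and must lie in the characteristic submanifold's boundary structure in a compatible way, one deduces $N$ is itself an $I$--bundle over a compact surface, with $\Sigma$ sitting as (part of) the horizontal boundary. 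Two cases emerge: (a) $N$ is a product $\Sigma \times I$, in which case gluing the two ends by the return map exhibits $M$ as a mapping torus of a surface automorphism; compatibility with $f|_\Sigma$ being pseudo-Anosov, together with Theorem~\ref{thm:mccarthy}, forces the monodromy to commute with $(f|_\Sigma)^2$ and — after adjusting by the finite-order part supplied by McCarthy's theorem — to be isotopic to a pseudo-Anosov map. This gives \eqref{itm:pamtA}. (b) $N$ is a twisted $I$--bundle, or $\Sigma$ is non-separating and $N$ is connected with two boundary copies of $\Sigma$ glued by $f$; unwinding the bundle structure, $M$ is Seifert fibered over a $2$--orbifold $\mathcal O$ with $\Sigma$ horizontal, and Lemma~\ref{lem:descends} (applied to the deck group of the covering $\Sigma \to \mathcal O$, which lies in the centralizer of a power of $f|_\Sigma$ inside $\mathcal G$) shows $\mathcal O$ is large. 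This gives \eqref{itm:sfsB}.

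For the last sentence — that $f$ is isotopic rel $\Sigma$ to a partially pseudo-Anosov homeomorphism — I would build the central $1$--manifold $C$ and the splitting explicitly in each case. In case \eqref{itm:pamtA}, $M$ fibers over $S^1$ with fiber $\Sigma$ and pseudo-Anosov monodromy $\varphi$; the suspension flow of $\varphi$ has a natural $Df$--invariant splitting $TM = E^s \oplus E^c \oplus E^u$ away from the singular orbits, where $E^c$ is tangent to the flow and $E^s, E^u$ are the suspensions of the stable/unstable line fields of $\varphi$, and $C$ is the union of the finitely many singular closed orbits; after isotoping $f$ rel $\Sigma$ to agree with the time-one (or appropriate time-$t$) map of this flow composed with a fiberwise map realizing $[\varphi]$ via McCarthy, the contraction/expansion estimates of Definition (partially hyperbolic) hold for a suitable iterate, possibly after smoothing in a neighborhood of $C$. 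In case \eqref{itm:sfsB}, $\Sigma$ horizontal means $M$ carries a flow transverse to $\Sigma$ coming from the Seifert structure (the circle fibers play the role of $E^c$), the return map to $\Sigma$ is the pseudo-Anosov $\eta$ lifted appropriately, and $C$ is the union of the exceptional fibers together with the fibers through the singular points of the invariant foliations; again one isotopes $f$ rel $\Sigma$ into a standard form adapted to this structure and verifies the partial-hyperbolicity inequalities off $C$.

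The main obstacle I anticipate is two-fold. First, the Bonahon-type rigidity step — ruling out all pieces of the characteristic submanifold and showing $N$ is an $I$--bundle — requires care about boundary patterns and about the case where $\Sigma$ is non-separating (so $N$ may be connected with two $\Sigma$-boundaries), and about orientation issues when $f$ swaps the two sides of $\Sigma$; handling the passage to a power $f^2$ cleanly, and then descending the conclusion back to $f$ itself (which is why the theorem only asserts the monodromy commutes with $(f|_\Sigma)^2$), is delicate. Second, and more seriously, the explicit construction of the partially hyperbolic splitting near the singularities $C$: a pseudo-Anosov map is genuinely non-smooth at prong singularities, so one must either work with a smooth model that is partially hyperbolic away from $C$ and quasi-conformally conjugate near $C$, or carefully verify that Definition~\ref{def:ppa} (which only demands partial hyperbolicity on $M \setminus C$) is satisfied — the subtlety being that $C$ must be a genuine embedded $1$--manifold and that the isotopy rel $\Sigma$ does not force $C$ to intersect $\Sigma$ in the wrong way. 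I expect the bulk of the technical work to be in verifying the uniform contraction/expansion estimates for an iterate of the standard-form $f$, using the dilatation $\lambda$ of the pseudo-Anosov map to dominate the (uniformly bounded, since $C$ is a finite union of periodic orbits) behavior in the central direction.
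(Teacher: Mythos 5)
Your overall architecture matches the paper's: cut along $\Sigma$, use the characteristic submanifold to show $M\mminus\Sigma$ is an $I$--bundle, invoke McCarthy's theorem to control the normalizer of $[\psi^2]$, and put $f$ into a standard fibered form. However, there are two genuine problems. First, your case analysis is wrong: when $M\mminus\Sigma$ is a product $\Sigma\times I$ (i.e.\ $\Sigma$ is non-separating), $M$ is a mapping torus $M_\varphi$, but McCarthy's theorem only excludes the possibility that $[\varphi]$ is infinite-order reducible; it leaves the dichotomy that $\varphi$ is either pseudo-Anosov (giving conclusion~\eqref{itm:pamtA}) or periodic (giving a Seifert fibered manifold with $\Sigma$ horizontal, i.e.\ conclusion~\eqref{itm:sfsB} --- think of $\Sigma\times S^1$ realized by $\psi\times\mathrm{id}$). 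Your claim that the product case ``forces the monodromy\dots to be isotopic to a pseudo-Anosov map'' by ``adjusting by the finite-order part'' is false; no adjustment turns the mapping torus of a periodic map into a pseudo-Anosov mapping torus. Relatedly, in the $I$--bundle step you need more than the observation that the frontier of the characteristic submanifold misses $\partial\hat M$: to conclude that each boundary component actually lies \emph{inside} the characteristic submanifold you must rule out its lying in a simple complementary piece, and the paper does this via Johannson's theorem that simple Haken manifolds have finite mapping class group (contradicting the infinite order of $\hat f^2$ on that boundary component). Your appeal to an unspecified ``Bonahon-type rigidity'' leaves this step unproved.

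Second, the ``moreover'' clause requires producing an \emph{isotopy rel $\Sigma$}, and your proposal never supplies the tool that makes this possible. The algebra (McCarthy plus the gluing relations) only shows that the induced map on $\Sigma\times I$ agrees with the standard model $(x,t)\mapsto(\psi(x),t)$ (or its side-swapping analogue) up to \emph{homotopy} rel boundary; upgrading this to an isotopy rel boundary is exactly Waldhausen's theorem for Haken manifolds, which the paper isolates as a lemma (together with an equivariant refinement needed in the separating case, where the isotopy must descend through the twisted $I$--bundles). Without this, ``after isotoping $f$ rel $\Sigma$ to agree with the time-one map of this flow'' is an assertion, not an argument. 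I would also note that once $f$ is in the standard form $(x,t)\mapsto(\psi(x),t)$, partial hyperbolicity off the singular orbits is essentially immediate from the definition of a pseudo-Anosov map (the central bundle is the $I$/fiber direction, on which the model is an isometry), so the analytic difficulties you anticipate near $C$ largely evaporate; the real work is the topological isotopy, not the estimates.
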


 The proof of Theorem \ref{thm:classify} will then be completed in Section \ref{sec:existence} by constructing examples of pseudo-Anosov surfaces in $3$--manifolds as given by Theorem \ref{thm:necessary} (\ref{itm:pamtA}) and (\ref{itm:sfsB}).
 
 In the rest of this section, we make the following

\begin{assumption}\label{pA}
  $M$ is a closed orientable irreducible 3-manifold which admits a pseudo-Anosov surface $\Sigma$ realized by a homeomorphism $f$.  The restriction of $f$ to $\Sigma$ will be denoted by $\psi$.
\end{assumption}

\subsection{A necessary condition}\label{sec:jsj}

First, we will show that $M\mminus \Sigma$ is an $I$--bundle, where $M\mminus \Sigma$ is the path completion of $M\setminus \Sigma$.\footnote{The manifold $M\mminus \Sigma$ is sometimes defined to be $M$ minus the interior of a regular neighborhood of $\Sigma$.  It is important later that a homeomorphism on $M$ preserving $\Sigma$ induces a specific homeomorphism of $M\mminus \Sigma$, not just a homeomorphism up to homotopy.}
Our tool is the characteristic submanifold from JSJ theory.

\subsubsection{Characteristic Seifert pairs and hyperbolicity}
We begin by recalling briefly some fundamental results and definitions in 3-manifold topology for characteristic Seifert pairs, 
following Jaco~\cite{Jaco}, Jaco-Shalen~\cite{JS} and Johannson~\cite{Jo}.  For more details, we refer the reader to the above references.

A compact 3-manifold $M$ (possibly with boundary) is called \emph{Haken} if it is orientable, irreducible, and contains a two-sided incompressible surface.  For example, any orientable irreducible 3-manifold with non-empty boundary is Haken.
A  \emph{Haken-manifold pair} $(M,F)$ is a Haken 3-manifold $M$ with an incompressible surface $F$ in the boundary $\partial M$.  When $M$ is connected, $(M,F)$ is called a \emph{Seifert pair} if either
\begin{enumerate}
\item $M$ is an $I$--bundle over a surface and $F$ is the corresponding $\partial I$--bundle; or
\item $M$ is Seifert fibered and $F$ is a union of fibers.
\end{enumerate}
In general, a Seifert pair is a union of such components.  

A fundamental result of Jaco--Shalen~\cite{JS} and Johannson~\cite{Jo} is that any Haken $3$--manifold contains a unique (up to isotopy) maximal Seifert pair:
\begin{theorem}[Characteristic submanifold theorem]\label{thm:JSJ}\cite[IX.12 and IX.15]{Jaco}
  Let $M$ be a Haken $3$--manifold with incompressible (possibly empty) boundary.  Then, up to isotopy, there is a unique maximal Seifert pair $(\chi(M),F)\subset(M,\partial M)$ with essential boundary.  Moreover, any essential annulus or torus in $M$ can be isotoped into $\chi(M)$.
\end{theorem}
This maximal Seifert pair is also called the \emph{characteristic submanifold} of $M$.

A manifold $M$ is  \emph{atoroidal} if it does not contain any essential tori and \emph{acylindrical} if it does not contain any essential annuli.  An atoroidal and acylindrical Haken manifold is necessarily \emph{simple} in the sense that its characteristic submanifold is empty.

\subsubsection{Reduction to $I$-bundles}

Using Jaco--Shalen and Johannson's Theorem~\ref{thm:JSJ}, we will show that the complement of a pseudo-Anosov surface in an irreducible $3$--manifold is an $I$--bundle.  Bonahon uses essentially the same argument to prove something slightly different in~\cite[Section 5]{Bon}.

 \begin{proposition}\label{prop:I-bundle}
 Suppose Assumption \ref{pA} holds. Then $\hat{M}=M\mminus\Sigma$ is an $I$-bundle.
 \end{proposition}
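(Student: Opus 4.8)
The plan is to pass to $\hat M=M\mminus\Sigma$ and run the characteristic submanifold machinery of Theorem~\ref{thm:JSJ}, playing it against the two basic rigidity properties of a pseudo-Anosov map: it fixes no essential multicurve (even up to isotopy), and it represents an infinite-order mapping class. Because $M$ is orientable and irreducible and $\Sigma$ is two-sided and incompressible, $\hat M$ is a (possibly disconnected) Haken manifold with incompressible boundary (cutting an irreducible manifold along an incompressible surface yields an irreducible manifold), each component of $\partial\hat M$ maps homeomorphically onto $\Sigma$ under the natural map $\hat M\to M$, and $\Sigma$ has genus at least $2$ since it carries a pseudo-Anosov map. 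The homeomorphism $f$ induces a genuine homeomorphism $\hat f\from\hat M\to\hat M$ (the point of the footnote), and via the above identifications some power of $\hat f$ restricts on each boundary component to a power of $\psi$. Let $V=\chi(\hat M)$ be the characteristic submanifold and $W$ the closure of $\hat M\setminus V$. By the uniqueness clause of Theorem~\ref{thm:JSJ}, $\hat f(V)$ is isotopic to $V$, so after an ambient isotopy I may assume $\hat f(V)=V$, and then, passing to a power $g=\hat f^{k}$, I may further assume that $g$ fixes every boundary component of $\hat M$; note $g(V)=V$ still, and $g$ restricts on each boundary component $\Sigma_i$ to a power of $\psi$, hence to a pseudo-Anosov class.

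The key observation is that $V\cap\Sigma_i$ is a $g$-invariant subsurface of $\Sigma_i$ whose frontier in $\Sigma_i$ is an essential multicurve (the characteristic submanifold has essential frontier). Since a pseudo-Anosov class fixes no nonempty essential multicurve, $V\cap\Sigma_i$ must be either empty or all of $\Sigma_i$. I then rule out the empty case: if $V\cap\Sigma_i=\emptyset$ then $\Sigma_i$ lies in the boundary of a component $W_0$ of $W$, which is atoroidal, acylindrical and Haken, hence has finite mapping class group (by Johannson~\cite{Jo}, or by Thurston hyperbolization together with Mostow rigidity). As $g$ preserves $W_0$, some power of $g$ is isotopic to the identity on $W_0$, so its restriction to $\Sigma_i$ is isotopic to the identity --- contradicting that a nontrivial power of a pseudo-Anosov map is never isotopic to the identity. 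Hence $\partial\hat M\subset V$.

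It remains to identify $\hat M$ with $V$. For each boundary component $\Sigma_i$, the component $V_i$ of $V$ containing it is a Seifert pair with $\Sigma_i$ in its distinguished boundary; since $\Sigma_i$ has genus $\ge 2$ it is not a union of fibers of a Seifert fibration, so $V_i$ is an $I$-bundle and $\Sigma_i$ is a component of its $\partial I$-bundle. If the frontier of $V_i$ were nonempty it would contain an essential annulus $A$ with $\partial A\subset\partial\hat M$; the region on the far side of $A$ cannot be a component of $W$ (such a component meets no boundary component of $\hat M$, so its boundary, a union of frontier surfaces of $V$, would consist of tori, not annuli), nor a Seifert-type component of $V$ (whose intersection with $\partial\hat M$ would be tori or Klein bottles), hence it is another $I$-bundle component attached to $V_i$ along a vertical annulus; but then the two together form an $I$-bundle, contradicting maximality of $V$. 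So each $V_i$ has empty frontier and is a whole component of $\hat M$; since every component of $\hat M$ meets $\partial\hat M$, the $V_i$ exhaust $\hat M$, and $\hat M$ is an $I$-bundle. I expect this last paragraph --- ruling out frontier tori, Seifert-type pieces and nontrivially attached $I$-bundle pieces, and invoking maximality of the characteristic submanifold --- to be the main obstacle; the earlier steps are a fairly direct confrontation of pseudo-Anosov rigidity with the characteristic submanifold.
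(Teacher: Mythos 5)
Your proposal is correct and follows essentially the same route as the paper: pass to $\hat M=M\mminus\Sigma$, invariantize the characteristic submanifold, use that a pseudo-Anosov class preserves no essential multicurve to show each boundary component lies entirely in or out of $\chi(\hat M)$, rule out the latter via Johannson's finiteness of the mapping class group of simple Haken pieces, and conclude the $I$--bundle components exhaust $\hat M$. Your closing case analysis is more elaborate than necessary --- once a closed surface of genus $\ge 2$ is a component of the $\partial I$--bundle, the base is closed and the frontier is automatically empty --- but it is not wrong.
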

\begin{proof}

  First, observe that $\hat{M}$ is orientable, irreducible with incompressible boundary, and $f$ induces a homeomorphism
  $\hat f\colon \hat{M}\to \hat{M}$.  The map $\hat{f}$ may exchange the boundary components of $\hat{M}$, but the map $\hat{f}^2$ preserves them, and restricts to a pseudo-Anosov map on each one.

By Theorem \ref{thm:JSJ}, there is a unique characteristic Seifert pair 
$$(\chi(\hat{M}),F)\subset(\hat{M},\partial\hat{M}).
$$
For brevity, we write $\chi$ for $\chi(\hat{M})$.  
We may assume by changing $\hat{f}$ by an isotopy that $\hat{f}(\chi) = \chi$.   Suppose that $F\cap P$ is non-empty for some boundary component $P$ of $\hat{M}$.  Then $\partial F\cap P$ is a system of curves on $P$ preserved by the pseudo-Anosov map $\hat{f}^2|_N$.  Such a system can only be empty, so $P\subset F$.  In other words, the frontier of $\chi$ is disjoint from $\partial \hat{M}$, and thus consists only of tori.

\begin{claim}\label{claim}
  If $P$ is a component of $\partial\hat{M}$, then $P\subset\chi=\chi(\hat{M})$.
\end{claim}
\begin{proof}[Proof of Claim] 
  Suppose the contrary, and let $M_0$ be the component of $\hat{M}\setminus\chi$ containing $P$.  Since $\hat{f}(\chi) = \chi$ and $\hat{f}^2(P) = P$, we have $\hat{f}^2(M_0) = M_0$.  Since $\hat{f}|_P$ is infinite order in the mapping class group of $P$, and $P$ is incompressible in $M_0$, this implies $\hat{f}^2$ is infinite order in the mapping class group of $M_0$.  However, $M_0$ is acylindrical and atoroidal 
  by~\cite[Lemma X.17 and Remark X.16(b)]{Jaco}, and so it is simple.  Johannson showed that the mapping class group of a simple Haken $3$--manifold is finite \cite[\S 27]{Jo}.  This contradiction establishes the claim.
\end{proof}
By Claim \ref{claim}, each component $P$ of the boundary of $\hat{M}$ is contained in some component of $\chi$.  Since the genus of $\Sigma$ is at least two, the component of $\chi$ must be an $I$--bundle, and in fact must be the entire component of $\hat{M}$ containing $P$.
\end{proof}

\subsection{Proof of Theorem \ref{thm:necessary}}\label{sec:incompressible}

In order to prove Theorem \ref{thm:necessary}, 
we need to show that the pair $(M,\Sigma)$ is as described in Theorem~\ref{thm:necessary} \eqref{itm:pamtA}-\eqref{itm:sfsB}, and that $f$ is isotopic rel $\Sigma$ to a partially pseudo-Anosov homeomorphism.

Note that, if $f$ exchanges the two sides of $\Sigma$, then $f^2$ preserves the two sides and still restricts to a pseudo-Anosov map on $\Sigma$.  

Our argument splits into two cases, depending on whether $\Sigma$ is separating or not.

\subsubsection{The non-separating case}\label{s:non-separating}

In case $\Sigma$ is non-separating, then Proposition~\ref{prop:I-bundle} implies that $M$ is a mapping torus 
\begin{equation}\label{eq:mt}
M_\varphi = \Sigma \times [0,1] / (x,0)\sim (\varphi(x),1)
\end{equation}
for some homeomorphism $\varphi\colon\Sigma\to\Sigma$, where we identify $\Sigma\subset M$ with $\Sigma\times\{0\}\subset M_\varphi$.  Since the mapping torus only depends on the isotopy class of the monodromy $\varphi$, the Nielsen--Thurston classification~\cite{Th2,FLP,Bers78,BestHand} tells us that we may take $\varphi$ to be one of following three mutually exclusive types:  Either
\begin{itemize}
\item[(a)] $\varphi$ is periodic (possibly the identity);
\item[(b)] $\varphi$ is reducible and infinite order; or
\item[(c)] $\varphi$ is pseudo-Anosov.
\end{itemize}
Notice that any nonzero power of $\varphi$ will be of the same type.

We first observe the following:

\begin{lemma}\label{lem:commute}
  The mapping class $[\varphi]$ commutes with $[\psi]^2$.
\end{lemma}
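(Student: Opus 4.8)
The plan is to argue at the level of fundamental groups. By Proposition~\ref{prop:I-bundle} we know $M = M_\varphi$ as in \eqref{eq:mt}; fix a basepoint $p_0\in\Sigma$, set $G=\pi_1(M,p_0)$ and $K=\pi_1(\Sigma,p_0)\trianglelefteq G$, so that $G/K\cong\bZ$, and choose $t\in G$ mapping to a generator and normalized so that conjugation by $t$ on $K$ is the monodromy automorphism $\varphi_*$ (here we use that, by Proposition~\ref{prop:I-bundle}, the gluing homeomorphism of the $I$--bundle $\hat M=\Sigma\times[0,1]$ may be taken to be $\varphi$). Under the Dehn--Nielsen--Baer isomorphism $\operatorname{Out}(K)\cong\Mod^\pm(\Sigma)$ (valid since $\Sigma$ is closed of genus $\ge 2$), the class $[\varphi]$ corresponds to the outer class of $\varphi_*$, and $[\psi]^2$ to that of $(\psi^2)_*=(\psi_*)^2$; so it is enough to show that $[\varphi_*]$ commutes with $[\psi_*]^2$ in $\operatorname{Out}(K)$.

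Next I would feed $f^2$ into this picture. Whether or not $f$ exchanges the two sides of $\Sigma$, the map $f^2$ satisfies $f^2(\Sigma)=\Sigma$ and $f^2|_\Sigma=\psi^2$. Choosing a path in $\Sigma$ from $p_0$ to $f^2(p_0)$ and using it for the change of basepoint, $f^2$ induces an automorphism $F\in\operatorname{Aut}(G)$, well defined up to conjugation by elements of $K$; since $f^2(\Sigma)=\Sigma$ we have $F(K)=K$, and the restriction $F|_K\in\operatorname{Aut}(K)$ has the same outer class as $(\psi^2)_*$, because it is (up to the choice of path) the automorphism induced by $f^2|_\Sigma=\psi^2$. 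As $F(K)=K$, $F$ descends to an automorphism of $G/K\cong\bZ$, which is multiplication by $\pm1$; hence $F(t)=t^{\pm1}k_0$ for some $k_0\in K$.

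Now one simply computes, for every $k\in K$:
\[
F(\varphi_*k)=F(tkt^{-1})=F(t)\,F(k)\,F(t)^{-1}=t^{\pm1}\bigl(k_0\,F(k)\,k_0^{-1}\bigr)t^{\mp1}=\varphi_*^{\pm1}\bigl(k_0\,F(k)\,k_0^{-1}\bigr).
\]
Thus $F|_K\circ\varphi_*$ and $\varphi_*^{\pm1}\circ F|_K$ differ by the inner automorphism $k\mapsto k_0 k k_0^{-1}$ of $K$, so in $\operatorname{Out}(K)$ the outer class of $F|_K=(\psi^2)_*$ commutes with $[\varphi_*]^{\pm1}$, hence with $[\varphi_*]$. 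Translating back through Dehn--Nielsen--Baer gives $[\varphi][\psi]^2=[\psi]^2[\varphi]$ in $\Mod^\pm(\Sigma)$, and since $[\varphi]$ and $[\psi]^2$ are orientation preserving this already holds in $\Mod(\Sigma)$.

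The displayed computation and the reduction to $\operatorname{Out}(K)$ are formal; I expect the only delicate point to be the bookkeeping in the second paragraph — arranging the basepoints and the auxiliary path so that $F$ honestly preserves $K$ and restricts there to an automorphism in the outer class of $(\psi^2)_*$, and recognizing the monodromy of $M_\varphi$ as conjugation by $t$. Geometrically this is nothing more than the fact that a self-homeomorphism of the product $I$--bundle $\Sigma\times[0,1]$ fixing the two ends is isotopic to a level-preserving one, but the $\pi_1$ argument bypasses the need to isotope.
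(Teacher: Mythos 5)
Your $\pi_1$/Dehn--Nielsen--Baer argument is a genuinely different route from the paper's (which lifts $f^2$ to $\Sigma\times[0,1]$, reads off $\hat f^2(x,1)=(\varphi\psi^2\varphi^{-1}(x),1)$ from equivariance under the gluing, and projects to get a homotopy $\psi^2\simeq\varphi\psi^2\varphi^{-1}$, finishing with ``homotopic implies isotopic''). The algebraic reduction to $\operatorname{Out}(K)$ and the computation with $F(t)=t^{\pm 1}k_0$ are a perfectly reasonable substitute, and arguably make the basepoint bookkeeping more transparent than the geometric version.

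There is, however, one genuine logical slip at the end. In the case $F(t)=t^{-1}k_0$ your computation yields $F|_K\circ\varphi_*=\varphi_*^{-1}\circ\mathrm{inn}_{k_0}\circ F|_K$, i.e.\ $[F]\,[\varphi_*]\,[F]^{-1}=[\varphi_*]^{-1}$ in $\operatorname{Out}(K)$. That relation says conjugation by $[F]$ \emph{inverts} $[\varphi_*]$; it is not the statement that $[F]$ commutes with $[\varphi_*]^{-1}$ (which would be equivalent to commuting with $[\varphi_*]$). So the inference ``commutes with $[\varphi_*]^{\pm1}$, hence with $[\varphi_*]$'' fails for the minus sign, and if that case could occur the lemma would not follow --- indeed, elsewhere in the paper (Lemma~\ref{lem:finiteorder}) exactly this inverting relation arises and is \emph{not} commutation. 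The fix is easy but must be said: the projection $G\to G/K\cong\bZ$ is given by algebraic intersection number with the cooriented surface $\Sigma$, and $f^2$ preserves the two sides of $\Sigma$ (hence its coorientation) even when $f$ exchanges them; therefore $F$ acts as $+1$ on $G/K$, the sign is $+$, and the computation gives honest commutation. This is precisely the role played by the sentence ``Since $f^2$ preserves the sides of $\Sigma$'' in the paper's proof, so you need its algebraic counterpart here. With that one addition your argument is complete.
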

\begin{proof}
  We replace $M$ by the mapping torus $M_\varphi$ as described in equation~\eqref{eq:mt}, with $\Sigma = \Sigma\times\{0\}$, and lift $f$ to $\hat f\from \Sigma\times[0,1]\to \Sigma\times[0,1]$.  Since $f^2$ preserves the sides of $\Sigma$, we have $\hat{f}^2(x,0) = (\psi^2(x),0)$ for any $x\in \Sigma$, and,
  since $\hat{f}^2$ must respect the equivalence relation $\sim$, we have
  \[ \hat{f}^2(x,1)\sim \hat{f}^2(\varphi^{-1}(x),0) = (\psi^2\varphi^{-1}(x),0)\sim (\varphi\psi^2\varphi^{-1}(x),1) .\]
  In particular  $\hat{f}^2 (x,1) = (\varphi\psi^2 \varphi^{-1}(x),1)$ for each $x\in \Sigma$.
  Let $\pi\from \Sigma\times [0,1]$ be the projection onto $\Sigma$, and $H = \pi\circ \hat{f}^2$.  Then $H$ is a homotopy with $H_0 = \psi^2$ and $H_1 = \varphi \psi^2 \varphi^{-1}$.  Homotopic surface homeomorphisms are isotopic, so we are finished.
\end{proof}

McCarthy's Theorem~\ref{thm:mccarthy} implies that any infinite order element of the normalizer of $[\psi]^2$ has a positive power which is also a power of $[\psi]^2$.  In particular it would also be pseudo-Anosov, so we have the following.
\begin{corollary}\label{cor:notior}
  The monodromy $\varphi$ is not infinite order reducible.
\end{corollary}

\begin{lemma}\label{lem:betterphi}
  After possibly replacing $\varphi$ by a homotopic map, one of the following holds:
  \begin{enumerate}
  \item\label{itm:preserve} $f$ preserves the sides of $\Sigma$ and $\varphi \psi = \psi\varphi$; or
  \item\label{itm:exchange} $f$ exchanges the sides of $\Sigma$ and $\varphi \psi = \psi\varphi^{-1}$.
  \end{enumerate}
  In either case $\psi^2$ commutes with $\varphi$.
\end{lemma}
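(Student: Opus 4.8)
The plan is to sharpen the mapping-class identity of Lemma~\ref{lem:commute} --- from one involving $[\psi]^2$ to one involving $[\psi]$, up to a side-exchange sign --- and then to promote it to an honest identity of homeomorphisms using McCarthy's Theorem~\ref{thm:mccarthy}. By Corollary~\ref{cor:notior} we know $\varphi$ is periodic or pseudo-Anosov, but the argument below does not distinguish the two cases.

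First I would reexamine the mapping torus. Lift $f$ to $\hat f\from\Sigma\times[0,1]\to\Sigma\times[0,1]$ as in the proof of Lemma~\ref{lem:commute}. If $f$ preserves the two sides of $\Sigma$, then $\hat f$ preserves each boundary copy of $\Sigma$, and from $\hat f(x,0)=(\psi(x),0)$ together with the requirement that $\hat f$ respect the gluing $(x,0)\sim(\varphi(x),1)$ one computes $\hat f(x,1)=(\varphi\psi\varphi^{-1}(x),1)$; projecting $\hat f$ onto the $\Sigma$--factor then exhibits a homotopy from $\psi$ to $\varphi\psi\varphi^{-1}$, and since homotopic surface homeomorphisms are isotopic, $[\varphi]$ commutes with $[\psi]$. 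If instead $f$ exchanges the sides, then $\hat f$ interchanges the two boundary copies, and the same bookkeeping gives $\hat f(x,0)=(\varphi\psi(x),1)$ and $\hat f(x,1)=(\psi\varphi^{-1}(x),0)$, so the $\Sigma$--factor of $\hat f$ is a homotopy from $\varphi\psi$ to $\psi\varphi^{-1}$, whence $[\varphi\psi]=[\psi\varphi^{-1}]$ in $\Mod^\pm(\Sigma)$. (As a consistency check with Lemma~\ref{lem:commute}, either identity implies $[\varphi][\psi]^2=[\psi]^2[\varphi]$.)

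Next I would realize $\varphi$ inside McCarthy's group. Let $\mathcal{F}^s,\mathcal{F}^u$ be the invariant foliations of $\psi$; these are also the invariant foliations of $\psi^2$, so the stabilizer $\mathcal{G}\subset\Homeo(\Sigma)$ of the pair $\{\mathcal{F}^s,\mathcal{F}^u\}$ is the group associated by Theorem~\ref{thm:mccarthy} to both $\psi$ and $\psi^2$. Applying that theorem to $\psi^2$, the projection $\mathcal{G}\to\Mod^\pm(\Sigma)$ is injective with image containing the normalizer of $\langle[\psi]^2\rangle$; by Lemma~\ref{lem:commute} this normalizer contains $[\varphi]$, so there is a unique $\tilde\varphi\in\mathcal{G}$ with $[\tilde\varphi]=[\varphi]$. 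I would then replace $\varphi$ by $\tilde\varphi$: this changes the mapping torus only by a homeomorphism which can be taken to be the identity on $\Sigma$, so $\Sigma$, the realizing map $\psi=f|_\Sigma$, and the preserve/exchange dichotomy are all left intact. Since $\psi$ already preserves each of $\mathcal{F}^s$ and $\mathcal{F}^u$, we also have $\psi\in\mathcal{G}$. Now every word in $\varphi$ and $\psi$ lies in $\mathcal{G}$, and by injectivity of $\mathcal{G}\to\Mod^\pm(\Sigma)$ two such words with equal image in $\Mod^\pm(\Sigma)$ are equal as homeomorphisms; hence the relations of the previous paragraph become the honest identities $\varphi\psi=\psi\varphi$ (preserving case) and $\varphi\psi=\psi\varphi^{-1}$ (exchanging case), and $[\varphi]$ commuting with $[\psi]^2$ becomes $\varphi\psi^2=\psi^2\varphi$ (which, in the exchanging case, also follows formally from $\varphi\psi=\psi\varphi^{-1}$).

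The crux is the passage from a mapping-class relation to a relation among the maps themselves, which is false for surface homeomorphisms in general; it is made possible here entirely by McCarthy's theorem, since the foliation-stabilizer $\mathcal{G}$ injects into $\Mod^\pm(\Sigma)$ and simultaneously contains $\psi$ and a representative of $\varphi$. I expect the only genuinely delicate bookkeeping to be getting the $\varphi^{\pm1}$ twist from the gluing correct in the side-exchanging case of the first step, together with verifying that replacing $\varphi$ by $\tilde\varphi$ leaves the pair $(M,\Sigma)$ and the realizing homeomorphism $f$ unchanged in the relevant sense.
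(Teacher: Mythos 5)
Your proposal is correct and follows essentially the same route as the paper: compute the boundary behavior of the lift $\hat f$ on $\Sigma\times[0,1]$ to get the mapping-class relations $[\varphi][\psi]=[\psi][\varphi]$ (preserving case) or $[\varphi\psi]=[\psi\varphi^{-1}]$ (exchanging case), then use McCarthy's Theorem~\ref{thm:mccarthy} applied to $\psi^2$ to replace $\varphi$ by a representative in the foliation-stabilizer $\mathcal{G}$ and upgrade these to identities in $\Homeo(\Sigma)$ via the injectivity of $\mathcal{G}\to\Mod^\pm(\Sigma)$. The bookkeeping in the side-exchanging case matches the paper's computation exactly.
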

\begin{proof}
  We apply McCarthy's theorem~\ref{thm:mccarthy} to $\psi^2$.  As in that statement, let $\mathcal{G}$ be the subgroup of $\Homeo(\Sigma)$ preserving the pair $\{\mathcal{F}^s,\mathcal{F}^u\}$, the stable and unstable foliations of $\psi^2$ (which are the same as those of $\psi$).  Let $N$ be the normalizer of $[\psi^2]$ in $\Mod^{\pm}(\Sigma)$.  
  Lemma~\ref{lem:commute} implies that $[\varphi]\in N$.  Theorem~\ref{thm:mccarthy} then tells us that $\varphi$ is homotopic to an element of $\mathcal{G}$.  For the remainder of the proof we assume that $\varphi\in \mathcal{G}$.
  
    We divide into cases.

    If $f$ preserves the sides of $\Sigma$, then the same argument as in Lemma~\ref{lem:commute} shows that $[\varphi]$ commutes with $[\psi]$ (not just $[\psi^2]$).  Because we have chosen $\varphi\in \mathcal{G}$, Theorem~\ref{thm:mccarthy} gives us $\varphi\psi = \psi\varphi$. 

    If $f$ exchanges the sides of $\Sigma$ we argue similarly.  We have an induced homeomorphism on $M\mminus \Sigma = \Sigma\times[0,1]$, but now it exchanges the boundary components.  
    Recall that $M_\varphi = \Sigma\times[0,1]/\sim$ where $(x,0) \sim (\varphi(x),1)$ for all $x$.  We deduce
  \[ \hat f (x,0) \sim (\psi(x),0) \sim (\varphi\psi(x),1), \]
  and
  \[ \hat f(x,1) \sim \hat f(\varphi^{-1}(x),0) = (\varphi\psi\varphi^{-1}(x),1)\sim (\psi\varphi^{-1}(x),0) \]
  If $\tau(x,t) = (x,1-t)$ flips $\Sigma\times[0,1]$ upside-down, we have
  \[ \tau \hat f (x,0) = (\varphi\psi(x),0) \quad\mbox{and}\quad \tau\hat f(x,1) = (\psi\varphi^{-1}(x),1).\]
  In other words $\tau\hat f$ gives a homotopy from $\varphi\psi$ to $\psi\varphi^{-1}$.  In other words $[\varphi\psi] = [\psi\varphi^{-1}]$.  Again our choice of $\varphi$ together with McCarthy's theorem imply this relation is satisfied in $\Homeo(\Sigma)$, so we have $\varphi\psi = \psi\varphi^{-1}$ in this case.
\end{proof}
For the rest of this sub-subsection we assume that $\varphi$ satisfies the conclusions of Lemma~\ref{lem:betterphi}.  By Corollary~\ref{cor:notior}, $\varphi$ is either finite order or pseudo-Anosov.

We will now prove Theorem \ref{thm:necessary} in the non-separating case:

\begin{proposition}\label{prop:nonseparating}
 Suppose Assumption \ref{pA} holds and $\Sigma$ is non-separating. Then $M=M_\varphi$, where  
   \begin{enumerate}
    \item\label{itm:pamt1} 
    $\varphi$ is pseudo-Anosov  
    and commutes with $\psi^2$; or
    \item\label{itm:sfs2} $\varphi$ is periodic, commutes with $\psi^2$, $\Sigma$ is horizontal in the Seifert fibration  of $M_\varphi$ and $\mathcal{O}=\Sigma/\langle \varphi \rangle$ is large. 
 \end{enumerate}
 Moreover, $f$ is isotopic rel $\Sigma$ to a partially pseudo-Anosov homeomorphism.
\end{proposition}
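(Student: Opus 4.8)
The plan is to leverage the structural reductions already in place --- $M=M_\varphi$ as in \eqref{eq:mt}, with $\varphi\in\mathcal{G}$ chosen so that Lemma~\ref{lem:betterphi} applies, so that either (i) $f$ preserves the sides of $\Sigma$ and $\varphi\psi=\psi\varphi$, or (ii) $f$ exchanges the sides and $\varphi\psi=\psi\varphi^{-1}$, and in both cases $\varphi$ commutes with $\psi^2$ --- together with Corollary~\ref{cor:notior}, which leaves only the alternatives that $\varphi$ is periodic or pseudo-Anosov. I would then proceed in three steps: (A) decide which of \eqref{itm:pamt1}, \eqref{itm:sfs2} holds; (B) exhibit an explicit partially pseudo-Anosov model $g\from M_\varphi\to M_\varphi$; (C) show $f$ is isotopic to $g$ rel $\Sigma$.

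For (A), I would first rule out a pseudo-Anosov $\varphi$ in case (ii). Both $\varphi$ and $\psi$ lie in the centralizer of the pseudo-Anosov map $\psi^2$ inside $\mathcal{G}$, so (as in the proof of Lemma~\ref{lem:descends}) each fixes $\mathcal{F}^s$ and $\mathcal{F}^u$ individually. In case (ii) the relation $\varphi\psi=\psi\varphi^{-1}$ says that $\psi$ conjugates $\varphi$ to $\varphi^{-1}$; but if $\varphi$ were pseudo-Anosov its stable foliation would be one of $\mathcal{F}^s,\mathcal{F}^u$, conjugation by $\psi$ would fix it, while passing from $\varphi$ to $\varphi^{-1}$ interchanges stable and unstable --- impossible. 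So a pseudo-Anosov $\varphi$ forces case (i) and conclusion \eqref{itm:pamt1}. If instead $\varphi$ is periodic, then $M_\varphi$ is Seifert fibered, with the orbits of the suspension flow $\Phi$ (periodic, since $\varphi$ is) as fibers; $\Sigma=\Sigma\times\{0\}$ is transverse to every fiber, hence horizontal, with base orbifold $\mathcal{O}=\Sigma/\langle\varphi\rangle$. Applying Lemma~\ref{lem:descends} to $\psi^2$ and to the finite group $\langle\varphi\rangle\le\mathcal{G}$ gives that $\mathcal{O}$ is large; moreover $\psi$ normalizes $\langle\varphi\rangle$, so it descends to a homeomorphism $\bar\eta$ of $\mathcal{O}$ whose square is the induced pseudo-Anosov map of $\psi^2$, whence $\bar\eta$ is pseudo-Anosov. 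This is conclusion \eqref{itm:sfs2}.

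For (B), I would take $g[x,t]=[\psi(x),t]$ in case (i) and $g[x,t]=[\varphi\psi(x),1-t]$ in case (ii); these are well defined on $M_\varphi$ because $\psi\varphi=\varphi\psi$, respectively $\varphi\psi\varphi=\psi$ (a restatement of $\varphi\psi=\psi\varphi^{-1}$), and in both cases $g|_\Sigma=\psi$ and $g\Phi_s=\Phi_{\pm s}g$. Realizing $\varphi$ as an honest pseudo-Anosov (resp.\ finite-order) homeomorphism, let $C\subset M_\varphi$ be the union of the singular $\Phi$-orbits: the suspensions of the singularities of $\mathcal{F}^s\cup\mathcal{F}^u$, together with the remaining singular Seifert fibers in the periodic case. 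Then $C$ is an embedded $1$-manifold invariant under $g$, and on $M_\varphi\setminus C$ the flow is smooth and $g$ preserves the continuous splitting $E^s\oplus E^c\oplus E^u$ with $E^c$ the flow direction and $E^s,E^u$ tangent to the suspended (resp.\ horizontally lifted) stable and unstable foliations. Equipping $M_\varphi\setminus C$ with the metric adapted to the flow and to these transverse measured foliations (unit-speed flow, flat metric in the transverse directions), $g$ fixes the unit flow vector up to sign, so $\lVert Dg\rVert\equiv 1$ on $E^c$, while $g$ uniformly contracts $E^s$ by the factor $\lambda^{-1}$ and expands $E^u$ by the factor $\lambda$, with $\lambda$ the dilatation of $\psi$ (resp.\ of $\bar\eta$). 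Choosing $n$ with $\lambda^n\ge 2$ then exhibits $g$ as partially hyperbolic off $C$, i.e.\ partially pseudo-Anosov.

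For (C), cutting $M_\varphi$ along $\Sigma$ yields $\hat M\cong\Sigma\times[0,1]$, with induced homeomorphisms $\hat f,\hat g$. A direct computation with the identification $(x,0)\sim(\varphi(x),1)$, using $\varphi\psi\varphi^{-1}=\psi$ in case (i) and $\varphi\psi=\psi\varphi^{-1}$ in case (ii) exactly as in the proofs of Lemmas~\ref{lem:commute} and~\ref{lem:betterphi}, shows that $\hat f$ and $\hat g$ agree on $\partial\hat M=\Sigma\times\{0,1\}$. Thus $\hat f\hat g^{-1}$ is the identity on $\partial\hat M$; being the identity there it induces the identity on $\pi_1$, and since $\Sigma\times[0,1]$ is aspherical it is homotopic, hence (by Waldhausen's theorem for Haken manifolds) isotopic, to the identity rel $\partial\hat M$. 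Descending such an isotopy to $M_\varphi$ gives $f\simeq g$ rel $\Sigma$. I expect the main obstacle to be the bookkeeping in the orientation-exchanging case (ii) --- arranging the flip-and-twist map $g$ so that it simultaneously descends to $M_\varphi$, restricts to $\psi$ on $\Sigma$, and matches $\hat f$ on the boundary --- together with the (standard, but essential) input that $\operatorname{Diff}(\Sigma\times[0,1],\partial)$ is connected for $\Sigma$ closed of genus at least two, which is what upgrades the explicit model $g$ to an honest isotopy representative of $f$ rel $\Sigma$.
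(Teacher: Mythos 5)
Your proposal is correct and follows essentially the same route as the paper: the dichotomy via Lemma~\ref{lem:betterphi} and Corollary~\ref{cor:notior}, largeness of $\mathcal{O}$ via Lemma~\ref{lem:descends}, the explicit fiber-preserving (resp.\ flip) models $[x,t]\mapsto[\psi(x),t]$ and $[x,t]\mapsto[\varphi\psi(x),1-t]$, and Waldhausen's theorem in the form of Lemma~\ref{lem:wald1} to upgrade boundary agreement to an isotopy rel $\Sigma$. Your additional observations --- that a pseudo-Anosov monodromy forces $f$ to preserve the sides of $\Sigma$, and the explicit splitting/metric witnessing partial hyperbolicity off the singular orbits --- are correct elaborations of points the paper leaves implicit, not a different argument.
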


For establishing the ``moreover" part of Proposition \ref{prop:nonseparating}, we will also need the following lemma, which is a consequence of work of Waldhausen:

\begin{lemma}\label{lem:wald1}
  Let $g\from \Sigma\times I\to \Sigma\times I$ be equal to the identity on $\partial (\Sigma \times I)$.  Then $g$ is isotopic to the identity rel $\partial (\Sigma \times I)$.  
\end{lemma}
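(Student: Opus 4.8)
The plan is to prove Lemma~\ref{lem:wald1} as a standard consequence of Waldhausen's rigidity for Haken manifolds, together with a relative isotopy extension argument. First I would recall Waldhausen's theorem (from \cite{Wald}): if $M$ is a Haken $3$--manifold and $g\from M\to M$ is a homeomorphism that is homotopic to the identity rel $\partial M$, then $g$ is \emph{isotopic} to the identity rel $\partial M$. (One must check $M=\Sigma\times I$ satisfies the hypotheses: it is compact, orientable, irreducible since $\Sigma$ is a surface of negative Euler characteristic, has incompressible boundary, and is Haken — indeed any $\Sigma\times\{t\}$ is a two-sided incompressible surface.) So the real content is to show that $g$, assumed to be the \emph{identity on $\partial(\Sigma\times I)$}, is homotopic to the identity rel $\partial$; then Waldhausen upgrades this homotopy to an isotopy.

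To produce the homotopy, I would argue that $\Sigma\times I$ is aspherical (its universal cover is $\widetilde\Sigma\times I$, contractible since $\widetilde\Sigma$ is a plane or disk), so homotopy classes of maps rel boundary are detected by the induced map on $\pi_1$. Since $g$ fixes $\Sigma\times\{0\}$ pointwise, $g_*$ is the identity on $\pi_1(\Sigma\times I)\cong\pi_1(\Sigma)$. By obstruction theory for maps into an aspherical space, a self-map of $\Sigma\times I$ inducing the identity on $\pi_1$ and already equal to the identity on the subcomplex $\partial(\Sigma\times I)$ is homotopic rel $\partial$ to the identity: the obstructions to such a homotopy lie in $H^k\big(\Sigma\times I,\partial;\,\pi_k(\Sigma\times I)\big)$ which vanish for $k\ge 2$ since the target is aspherical, and the $\pi_1$-level obstruction vanishes by hypothesis. (Concretely: lift $g$ to $\tilde g\from\widetilde\Sigma\times I\to\widetilde\Sigma\times I$ fixing the preimage of $\Sigma\times\{0\}$; the straight-line homotopy in a suitable model, or simply contractibility of the target together with agreement on the full boundary, gives an equivariant homotopy rel $\partial$ to the identity, which descends.)

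Once $g$ is homotopic to $\mathrm{id}$ rel $\partial(\Sigma\times I)$, Waldhausen's theorem gives that $g$ is isotopic to $\mathrm{id}$ rel $\partial(\Sigma\times I)$, which is exactly the claim. I expect the main obstacle to be purely expository rather than mathematical: stating Waldhausen's theorem in precisely the relative form needed (isotopy \emph{rel the full boundary}, not just rel one boundary component or up to permuting boundary components) and confirming that $\Sigma\times I$ meets all the hypotheses, including that it is sufficiently large and has incompressible boundary. A secondary point to be careful about is that $\Sigma$ here has genus at least two (from Assumption~\ref{pA}), so there are no exceptional small-surface cases to worry about, and the asphericity and $\pi_1$-injectivity arguments go through without qualification.
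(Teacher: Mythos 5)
Your proposal is correct and follows essentially the same route as the paper: first produce a homotopy from $g$ to the identity rel $\partial(\Sigma\times I)$ using asphericity (the paper does this concretely, straightening the $I$--coordinate and then using straight-line homotopies in $\bH^2$, which is exactly your parenthetical argument), and then invoke Waldhausen's Theorem 7.1 to upgrade the homotopy to an isotopy rel boundary. Your closing remark about genus at least two is indeed the relevant point for the homotopy to be rel the \emph{disconnected} boundary (centerlessness of $\pi_1(\Sigma)$ rules out twist-type obstructions along arcs joining the two boundary components), and both arguments rely on it.
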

\begin{proof}
  First we claim that $g$ is homotopic to a fiber-preserving map $g'$.  Indeed, write $g(x,t) = (g_1(x,t),g_2(x,t))$, and let $H_s(x,t) = (g_1(x,t),(1-s)g_2(x,t) + st)$.  We have $H_0 = g$, and $H_1(x,t) = (g_1(x,t),t)$. 

  Now for each $t$, $g'_t = ( x\mapsto g_1(x,t) )$ is a homotopy equivalence of $\Sigma$ to itself, homotopic to the identity.  Fixing a hyperbolic structure on $\Sigma$ and lifting to $\bH^2$, we may use straight-line homotopies to continuously deform each $g'_t$ to the identity on $\Sigma$.  This process depends continuously on $g'_t$, so we obtain a homotopy of $g'$ to the identity on $\Sigma\times I$.

  A result of Waldhausen~\cite[Theorem 7.1]{Waldhausen} implies that since $g$ is homotopic to the identity rel boundary, it is isotopic to the identity rel boundary.
\end{proof}

\begin{proof}[Proof of Proposition \ref{prop:nonseparating}]
  As pointed out above, 
  Proposition~\ref{prop:I-bundle} implies that $M$ is a mapping torus $M_\varphi$ for some $\varphi\from \Sigma\to \Sigma$.
  We may assume $\varphi$ satisfies the conclusions of Lemma~\ref{lem:betterphi}, in particular $\varphi$ commutes with $\psi^2$ in $\Homeo(\Sigma)$.  By Corollary~\ref{cor:notior}, $\varphi$ is either pseudo-Anosov or periodic.
  If $\varphi$ is pseudo-Anosov, we obtain conclusion~\eqref{itm:pamt1}.

  If on the other had $\varphi$ is finite order,
  the interval fibers of $\Sigma\times[0,1]$ glue together in $M_\varphi$ to form the circles of a Seifert fibration in which the surface $\Sigma$ is horizontal.  The base orbifold of this Seifert fibration is $\mathcal{O} = \Sigma / \langle \varphi\rangle$.  By Lemma~\ref{lem:descends} the orbifold $\mathcal{O}$ is large and we have established conclusion~\eqref{itm:sfs2}.

  We are left to establish that $f$ is isotopic rel $\Sigma$ to a partially pseudo-Anosov homeomorphism.
  If $f$ preserves the sides of $\Sigma$, Lemma~\ref{lem:betterphi} tells us that $\varphi\psi = \psi\varphi$.
  On $M\mminus \Sigma = \Sigma\times [0,1]$, we have the induced map $\hat f(x,t)$
  where $\hat f(x,0) = (\psi(x),0)$ and $\hat f(x,1) = (\varphi\psi\varphi^{-1}(x),1) = (\psi(x),1)$.  In particular, if $\hat h(x,t) = (\psi(x),t)$ then $g = \hat h^{-1}\circ \hat f$ satisfies the hypotheses of Lemma~\ref{lem:wald1}.  That lemma gives an isotopy rel boundary from $g$ to the identity on $\Sigma \times [0,1]$.  Composing with $\hat h$ we obtain an isotopy rel boundary from $\hat f$ to $\hat h$.  Gluing back up, we have an isotopy rel $\Sigma$ from $f$ to a homeomorphism $h$ of $M_\varphi$ which preserves each fiber and acts as a pseudo-Anosov on that fiber.  This map $h$ is partially pseudo-Anosov.

  If the map $f$ switches the sides of $\Sigma$, Lemma~\ref{lem:betterphi} tells us that $\varphi\psi = \psi\varphi^{-1}$.
  We still get an induced map $\hat f$ on $M\mminus \Sigma = \Sigma\times[0,1]$, but now it exchanges the boundary components.
  Arguing as in the proof there, we have
  $\hat{f}(x,0) = (\varphi\psi(x),1)$ and $\hat{f}(x,1) = (\psi\varphi^{-1}(x),0) = (\varphi\psi(x),0)$.  Let $\sigma(x,t) = (\varphi\psi(x),1-t)$.  Then the Waldhausen Lemma~\ref{lem:wald1} implies that $\sigma^{-1}\circ \hat{f}$ is homotopic rel boundary to the identity.  Composing with $\sigma$ we obtain a homotopy rel boundary from $\hat{f}$ to $\sigma$.  Gluing back up, we have an isotopy rel $\Sigma$ from $f$ to a map $h\from M\to M$ which sends fibers to fibers.  The square of $h$ sends each fiber to itself and acts as a pseudo-Anosov on each fiber, so $h$ is a partially pseudo-Anosov homeomorphism.  (Note that $(\varphi\psi)^2 = \psi^2$.)
\end{proof}

\subsubsection{The separating case}
Now we suppose that $\Sigma$ separates $M$ into two $I$--bundles.  This is to say that $M$ is a \emph{semi-bundle} as studied for example by Cooper--Walsh \cite{CW}.  This means that $M$ is obtained by gluing together two nontrivial $I$--bundles along their boundary.  Such $I$--bundles can be described as follows.
\begin{lemma}\label{lem:Ibundle}
  Suppose that $C$ is a nontrivial $I$--bundle with boundary $\Sigma$.  Then there is a fixed-point free involution $\tau\from \Sigma\to \Sigma$ so that
  \[ C \cong C_\tau := \Sigma\times[0,1]/\sim, \quad  x\in \Sigma, (x,1)\sim (\tau(x),1).\]
  Moreover if $\tau$ and $\tau'$ are homotopic involutions, then there is a homeomorphism $C_\tau\to C_{\tau'}$ which restricts to the identity on $\Sigma\times\{0\}$.
\end{lemma}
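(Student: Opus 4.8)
The plan is to establish the two assertions separately. For the existence of $\tau$: a nontrivial $I$--bundle is a \emph{twisted} $I$--bundle over a compact surface $B$, and since $\partial C = \Sigma$ is closed and connected, $B$ is closed and the associated $\partial I$--subbundle $p\from\partial C = \Sigma\to B$ is a \emph{connected} double cover. Its nontrivial deck transformation $\tau$ is a fixed-point free involution of $\Sigma$. Being a twisted $I$--bundle, $C$ is homeomorphic to the mapping cylinder of $p$; and for a double cover $p\from\Sigma\to\Sigma/\tau$ the mapping cylinder is precisely $\Sigma\times[0,1]/\!\sim$ with $(x,1)\sim(\tau(x),1)$ (the image of $\Sigma\times\{1\}$ being $\Sigma/\tau\cong B$). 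This gives a homeomorphism $C\cong C_\tau$ carrying $\partial C$ onto $\Sigma\times\{0\}$. In our context $C$ is orientable, being a complementary piece of the orientable $M$, so $\tau$ is orientation-reversing and $B$ is non-orientable.

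For the ``moreover'' statement, I would first produce an isomorphism $\phi\from\pi_1 C_\tau\to\pi_1 C_{\tau'}$ restricting to the identity on $\pi_1(\Sigma\times\{0\}) = \pi_1\Sigma$. Each of $\pi_1 C_\tau$, $\pi_1 C_{\tau'}$ is an extension of $\bZ/2$ by $\pi_1\Sigma$ (the quotient being the structure group $\bZ/2$ of the $I$--bundle), and its outer action $\bZ/2\to\operatorname{Out}(\pi_1\Sigma)$ is generated by the class of $\tau_*$, respectively of $\tau'_*$. Since $\tau$ and $\tau'$ are homotopic, $\tau_* = \tau'_*$ in $\operatorname{Out}(\pi_1\Sigma)$, so both extensions realize the same outer action. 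As $\Sigma$ has genus at least two, $\pi_1\Sigma$ is centerless; hence $H^2(\bZ/2;Z(\pi_1\Sigma)) = 0$, and an extension of $\bZ/2$ by $\pi_1\Sigma$ with the given outer action is unique up to an isomorphism restricting to the identity on $\pi_1\Sigma$. That isomorphism is $\phi$.

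Next I would realize $\phi$ topologically. Both $C_\tau$ and $C_{\tau'}$ deformation retract onto the core surface $B$, hence are aspherical, and in particular irreducible (by the Sphere Theorem, since $\pi_2 = 0$); as $\pi_1\Sigma\hookrightarrow\pi_1 C_\tau$ their boundary is incompressible, and being orientable they are Haken. By asphericity, $\phi$ is induced by a map $F_0\from C_\tau\to C_{\tau'}$, and by building $F_0$ over a relative CW structure for $(C_\tau,\Sigma\times\{0\})$ — the obstructions to extension lying only in $\pi_1$, which $\phi$ resolves, and in $\pi_k C_{\tau'} = 0$ for $k\ge 2$ — one may arrange that $F_0$ restricts to the identity $\Sigma\times\{0\}\to\Sigma\times\{0\} = \partial C_{\tau'}$. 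Being a $\pi_1$--isomorphism of aspherical manifolds that restricts to a homeomorphism on the boundary, $F_0$ is a homotopy equivalence, so Waldhausen's theorem~\cite{Waldhausen} implies it is homotopic to a homeomorphism $C_\tau\to C_{\tau'}$. Since a self-homeomorphism of $\Sigma$ homotopic to the identity is isotopic to it, this homeomorphism can be corrected by an isotopy supported in a collar of $\partial C_{\tau'}$ so as to restrict to the identity on $\Sigma\times\{0\}$, as required.

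I expect the main obstacle to be the middle step: one needs the fundamental-group isomorphism to be \emph{literally} the identity on $\pi_1\Sigma$, not merely conjugate to it — this is precisely where the centrality of $\pi_1\Sigma$ is used — after which asphericity and Waldhausen's theorem finish the argument formally. An alternative to these two steps is to invoke the classical fact (due to Nielsen in the finite cyclic case) that homotopic finite-order homeomorphisms of a closed surface are conjugate by an ambient isotopy: given $h\tau h^{-1} = \tau'$ with $(h_t)_{t\in[0,1]}$ an isotopy from the identity to $h$, the map $(x,t)\mapsto(h_t(x),t)$ on $\Sigma\times[0,1]$ descends to the desired homeomorphism $C_\tau\to C_{\tau'}$.
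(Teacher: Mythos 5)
Your proof is correct and follows essentially the same route as the paper's (which is only a sketch): the involution is the deck transformation of the $\partial I$--subbundle double cover, and the second part extends the identity on $\Sigma\times\{0\}$ to a homotopy equivalence and then invokes Waldhausen's theorem. Your extension-theoretic argument (using that $\pi_1\Sigma$ is centerless) supplies exactly the detail the paper leaves implicit in asserting that the identity on $\Sigma\times\{0\}$ extends to a homotopy equivalence $C_\tau\to C_{\tau'}$, and your alternative via Nielsen's conjugacy theorem for homotopic periodic maps is also valid.
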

\begin{proof}
  (Sketch) The involution is given by following the $I$--fibers from one point on the boundary to another.  It is clearly fixed-point free.

  For the second part, note first that the identity on $\Sigma\times\{0\}$ extends to a homotopy equivalence $C_\tau\to C_{\tau'}$.  Then apply~\cite[Theorem 6.1]{Waldhausen} to promote the homotopy equivalence to a homeomorphism.
\end{proof}

The freedom to replace $\tau$ by a homotopic involution together with McCarthy's Theorem gives us the following.
\begin{lemma}\label{lem:semibundle}
  There are involutions $\tau_\epsilon$ of $\Sigma$ with $\epsilon \in \{\pm 1\}$
  such that $M=C_{-1}\cup_\Sigma C_{1}$, and
  each $C_\epsilon$ is homeomorphic to the mapping cylinder of the covering map $\Sigma \to \Sigma / \langle \tau_\epsilon\rangle$.  Moreover the involutions $\tau_\epsilon$ can be chosen to satisfy the following, for each $\epsilon\in \{\pm 1\}$.
  \begin{enumerate}
  \item\label{itm:taucommute} If $f$ preserves the sides of $\Sigma$, then $\psi = \tau_\epsilon\psi\tau_\epsilon$
  \item\label{itm:tauswitch} If $f$ exchanges the sides of $\Sigma$, then $\psi^2 =\tau_\epsilon \psi^2\tau_\epsilon$
    and $\psi = \tau_{-\epsilon}\psi\tau_\epsilon$.
  \end{enumerate}
\end{lemma}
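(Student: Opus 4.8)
The plan is to read off the two involutions from the $I$--bundle structure we already have, and then use McCarthy's theorem to control how $\psi$ interacts with them. By Proposition~\ref{prop:I-bundle} the cut-open manifold $M\mminus\Sigma$ is an $I$--bundle, and since $\Sigma$ separates it is a disjoint union of two components $C_{-1},C_{1}$, each an $I$--bundle with \emph{connected} boundary $\Sigma$, hence a twisted $I$--bundle over a non-orientable surface $N_\epsilon$ with $\chi(N_\epsilon)=\tfrac12\chi(\Sigma)<0$. Lemma~\ref{lem:Ibundle} identifies each $C_\epsilon$ with $C_{\tau_\epsilon}$, i.e.\ with the mapping cylinder of the double cover $\Sigma\to N_\epsilon=\Sigma/\langle\tau_\epsilon\rangle$, for some fixed-point-free involution $\tau_\epsilon$, and $M=C_{-1}\cup_\Sigma C_{1}$. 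So the real content of the lemma is how $\psi$ conjugates the $\tau_\epsilon$, and the tool we get to use is the freedom (again from Lemma~\ref{lem:Ibundle}) to replace each $\tau_\epsilon$ by a homotopic fixed-point-free involution.

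First I would extract the relations at the level of mapping classes. The $I$--fibration of a twisted $I$--bundle over a surface of negative Euler characteristic is unique up to isotopy, so any homeomorphism between the pieces $C_\epsilon$ is isotopic to a fiber-preserving one \cite{Waldhausen, Jo}. If $f$ preserves the sides of $\Sigma$, then $\hat f$ preserves each $C_\epsilon$; after an isotopy $\hat f|_{C_\epsilon}$ is fiber-preserving, hence covers a homeomorphism of $N_\epsilon$, so $\hat f|_{\partial C_\epsilon=\Sigma}$ is a lift of that map to the orientation double cover and therefore normalizes the deck group $\langle\tau_\epsilon\rangle\cong\bZ/2$. As this group has a unique nontrivial element, $\hat f|_\Sigma$ commutes with $\tau_\epsilon$, and since $\hat f|_\Sigma$ is isotopic to $\psi$ we get $[\psi][\tau_\epsilon]=[\tau_\epsilon][\psi]$ in $\Mod^\pm(\Sigma)$. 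If $f$ exchanges the sides, $\hat f$ interchanges $C_{1}$ and $C_{-1}$, and the same argument applied to fiber-preserving representatives of $\hat f|_{C_1}\from C_{\tau_1}\to C_{\tau_{-1}}$ and $\hat f|_{C_{-1}}\from C_{\tau_{-1}}\to C_{\tau_1}$ gives $[\psi][\tau_1][\psi]^{-1}=[\tau_{-1}]$ and $[\psi][\tau_{-1}][\psi]^{-1}=[\tau_1]$, whence also $[\psi^2][\tau_\epsilon][\psi^2]^{-1}=[\tau_\epsilon]$.

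Next I would upgrade these to honest identities in $\Homeo(\Sigma)$ using Theorem~\ref{thm:mccarthy}. In both cases each $[\tau_\epsilon]$ normalizes $\langle[\psi^2]\rangle$, so it is realized by a \emph{unique} element $\tilde\tau_\epsilon$ of the group $\mathcal G$ preserving the foliation pair $\{\mathcal F^s,\mathcal F^u\}$; since $\mathcal G\hookrightarrow\Mod^\pm(\Sigma)$ is injective and $[\tau_\epsilon]$ has order two (a nontrivial periodic map of a hyperbolic surface is not isotopic to the identity), $\tilde\tau_\epsilon$ is an involution, and being homotopic to the fixed-point-free involution $\tau_\epsilon$ it is itself fixed-point free (homotopic periodic surface homeomorphisms are conjugate). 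Since $\psi\in\mathcal G$ as well and $\mathcal G$ is a group, the elements $\psi\tilde\tau_\epsilon\psi^{-1}$, $\psi^2\tilde\tau_\epsilon\psi^{-2}$ and $\tilde\tau_{\pm1}$ all lie in $\mathcal G$, so any two of them that agree in $\Mod^\pm(\Sigma)$ agree in $\Homeo(\Sigma)$. Translating the mapping-class relations above, this gives $\psi\tilde\tau_\epsilon=\tilde\tau_\epsilon\psi$ (so $\psi=\tilde\tau_\epsilon\psi\tilde\tau_\epsilon$) in the side-preserving case, and $\psi\tilde\tau_{\pm1}\psi^{-1}=\tilde\tau_{\mp1}$ (so $\psi=\tilde\tau_{-\epsilon}\psi\tilde\tau_\epsilon$) together with $\psi^2\tilde\tau_\epsilon\psi^{-2}=\tilde\tau_\epsilon$ (so $\psi^2=\tilde\tau_\epsilon\psi^2\tilde\tau_\epsilon$) in the side-exchanging case. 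Finally, since $\tilde\tau_\epsilon$ is fixed-point free and homotopic to $\tau_\epsilon$, Lemma~\ref{lem:Ibundle} provides homeomorphisms $C_{\tau_\epsilon}\to C_{\tilde\tau_\epsilon}$ restricting to the identity on $\Sigma$; gluing them along $\Sigma$ shows $M=C_{\tilde\tau_{-1}}\cup_\Sigma C_{\tilde\tau_1}$, and renaming $\tilde\tau_\epsilon$ as $\tau_\epsilon$ finishes the argument.

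The step I expect to be the main obstacle is exactly this passage from the mapping class group to $\Homeo(\Sigma)$: a priori the $\tau_\epsilon$ satisfy the desired relations only up to isotopy, and we must realize them honestly while keeping the involutions both fixed-point free and inside their original homotopy classes, so that Lemma~\ref{lem:Ibundle} still lets us reassemble $M$. McCarthy's Theorem~\ref{thm:mccarthy} is tailor-made for this, since its injectivity statement for $\mathcal G$ lets commuting (resp. conjugation) relations among $\psi$ and the $\tilde\tau_\epsilon$ be certified in $\Mod^\pm(\Sigma)$, while the rigidity of periodic maps handles fixed-point-freeness. A secondary technical point is the standard fact that homeomorphisms of, or between, twisted $I$--bundles over surfaces of negative Euler characteristic are isotopic to fiber-preserving ones, which is what converts the topology of $\hat f$ into these algebraic relations.
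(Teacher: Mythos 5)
Your proposal is correct and follows essentially the same route as the paper: identify the two pieces as twisted $I$--bundles via Lemma~\ref{lem:Ibundle}, extract the conjugation relations between $[\psi]$ and the $[\tau_\epsilon]$ in $\Mod^\pm(\Sigma)$, and then use the injectivity of $\mathcal G\to\Mod^\pm(\Sigma)$ from Theorem~\ref{thm:mccarthy} together with the replacement clause of Lemma~\ref{lem:Ibundle} to promote these to identities in $\Homeo(\Sigma)$ while keeping the involutions fixed-point free. The only cosmetic difference is that you obtain the mapping-class relations by isotoping $\hat f$ to a fiber-preserving map and reading off how it normalizes the deck groups, whereas the paper lifts $\hat f$ to the trivial $I$--bundle double covers and computes the boundary restrictions explicitly; both are valid.
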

\begin{proof}
  Let $C_{\pm 1}$ be the two submanifolds of $M$ bounded by $\Sigma$, so $M\mminus \Sigma = C_{-1} \sqcup C_{1}$,
  and let $\tau_{\pm 1}$ be the involutions given by Lemma~\ref{lem:Ibundle}.  (We may replace these by homotopic involutions by the second part of Lemma~\ref{lem:Ibundle}.)  
  For each $\epsilon\in \{\pm 1\}$, let $\hat C_\epsilon$ be the double cover of $C_\epsilon$ which is a trivial $I$--bundle.  We can parametrize each $\hat C_\epsilon$ as $\Sigma\times[0,1]$, where $\Sigma\times \{0\}$ is a chosen lift of $\Sigma$, and $r_\epsilon(x,t) = (\tau_\epsilon(x),1-t)$ is the deck transformation.
  
  Suppose that $f$ preserves the sides of $\Sigma$.  Since $f|C_\epsilon$ preserves $\Sigma$ it lifts to some $\tilde{f}_\epsilon\from \hat C_\epsilon\to \hat C_\epsilon$.  This lift commutes with the deck transformation $r_\epsilon$, so we have, in coordinates,
  \[ \tilde{f}_\epsilon(x,0) = (\psi(x),0)\mbox{ and } \tilde{f}_\epsilon(x,1) = (\tau_\epsilon\psi\tau_\epsilon(x),1). \]
  Projecting $\hat C_\epsilon$ onto the $\Sigma$ factor thus gives a homotopy from $\psi$ to $\tau_\epsilon\psi\tau_\epsilon$.  Using Theorem~\ref{thm:mccarthy} and the second part of Lemma~\ref{lem:Ibundle}, we can replace the involutions $\tau_\epsilon$ with involutions which commute with $\psi$ in $\Homeo(\Sigma)$, thus establishing conclusion~\eqref{itm:taucommute}.

  If $f$ exchanges the sides of $\Sigma$, the above argument applied to $f^2$ allows us to choose the involutions $\tau_\epsilon$ to commute with $\psi^2$.  More precisely, McCarthy's Theorem~\ref{thm:mccarthy} together with the second part of Lemma~\ref{lem:Ibundle} allows us to assume each $\tau_\epsilon$ lies
  in the group $\mc{G}<\Homeo(\Sigma)$ preserving the stable and unstable foliations for $\psi^2$.  Note that $\psi\in \mc{G}$ as well.
  Since $f(\Sigma) = \Sigma$, the induced automorphism of $M\mminus\Sigma =  C_{-1}\sqcup C_1$ lifts to an automorphism $\tilde f$ of the trivial $I$--bundle cover $\widehat{M\mminus\Sigma} = \hat C_{-1}\sqcup \hat C_1$.
  Explicitly we write the cover in coordinates as
  \begin{equation}
    \widehat{M \mminus \Sigma } = \Sigma \times [0,1]\times\{\pm 1\},
  \end{equation}
  so that for each $\epsilon$
  \begin{equation}\label{tildef0} \tilde f(x,0,\epsilon) = (\psi(x),0,-\epsilon),\end{equation}
  and the map $r(x,t,\epsilon) = (\tau_\epsilon( x),1-t,\epsilon)$ gives the deck transformation of the cover $\widehat{M\mminus \Sigma} \to M\mminus \Sigma$.
    Since $\tilde f$ commutes with $r$, we have
    \begin{equation}\label{tildef1}
      \tilde f(x,1,\epsilon) = r \tilde f r(x,1,\epsilon) = r\tilde f (\tau_\epsilon( x),0,\epsilon)
       = r (\psi\tau_\epsilon(x), 0,-\epsilon) = (\tau_{-\epsilon}\psi\tau_\epsilon(x), 1,-\epsilon).
    \end{equation}
    Projecting onto the $\Sigma$ factor, we see that $\tilde{f}$ restricted to $\hat C_\epsilon$ gives a homotopy
    from $\psi$ to $\tau_{-\epsilon}\psi\tau_\epsilon$.

    Because $\{\psi,\tau_{\pm 1}\}\subset \mc{G}$, and McCarthy's Theorem~\ref{thm:mccarthy} tells us that $\mc{G}$ injects into $\Mod(\Sigma)$, the homotopy must be an equality, and we obtain
    \[ \psi = \tau_{-\epsilon}\psi\tau_\epsilon \]
    for each $\epsilon$, establishing conclusion~\eqref{itm:tauswitch}.
\end{proof}

\begin{lemma}\label{lem:finiteorder}
  $M$ is double covered by a mapping torus $M_\varphi$, where $\varphi\from \Sigma\to\Sigma$ has finite order.
\end{lemma}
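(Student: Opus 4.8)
The plan is to make the semi-bundle structure of $M$ explicit, trivialize the $I$--bundle on each side to realize the canonical double cover as a genuine mapping torus, and then use McCarthy's Theorem~\ref{thm:mccarthy} together with the relations of Lemma~\ref{lem:semibundle} to see that its monodromy cannot be pseudo-Anosov. Concretely, write $M = C_{-1}\cup_\Sigma C_1$ with the involutions $\tau_{\pm 1}$ and the relations furnished by Lemma~\ref{lem:semibundle}, so that (after absorbing the gluing homeomorphism $\partial C_{-1}\to\partial C_1$ into the parametrization of $C_1$, replacing $\tau_1$ by a conjugate involution) each $C_\epsilon$ is the mapping cylinder $C_{\tau_\epsilon}=\Sigma\times[0,1]/(x,1)\sim(\tau_\epsilon x,1)$ of Lemma~\ref{lem:Ibundle}. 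The trivial double cover of $C_{\tau_\epsilon}$ is $\hat C_\epsilon\cong\Sigma\times[-1,1]$ with free deck involution $r_\epsilon(x,t)=(\tau_\epsilon x,-t)$; gluing $\hat C_{-1}$ to $\hat C_1$ along the two resulting pairs of boundary copies of $\Sigma$ (one pair by the identity, the other by the map induced from the gluing of $M$) yields a connected manifold $\widetilde M$ with a free involution whose quotient is $M$. Tracking the identifications, $\widetilde M\cong M_\varphi$ with $\varphi=\tau_1\tau_{-1}$; this is the usual double cover of a semi-bundle (compare Cooper--Walsh~\cite{CW}). Since $\tau_{\pm 1}\in\mc G$, the monodromy $\varphi$ lies in $\mc G$, the (virtually cyclic) subgroup of $\Homeo(\Sigma)$ stabilizing the foliation pair of $\psi$.

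Next I would record the relations satisfied by $\varphi$. If $f$ preserves the sides of $\Sigma$, then $\tau_\epsilon\psi=\psi\tau_\epsilon$ for each $\epsilon$ by Lemma~\ref{lem:semibundle}\eqref{itm:taucommute}, so $\varphi$ commutes with $\psi$, while $\tau_1\varphi\tau_1^{-1}=\tau_{-1}\tau_1=\varphi^{-1}$ and $\tau_1$ commutes with $\psi$. If $f$ exchanges the sides, then by Lemma~\ref{lem:semibundle}\eqref{itm:tauswitch} each $\tau_\epsilon$ commutes with $\psi^2$ and $\psi=\tau_{-\epsilon}\psi\tau_\epsilon$; from the two instances $\psi\tau_1=\tau_{-1}\psi$ and $\psi\tau_{-1}=\tau_1\psi$ one computes $\psi\varphi\psi^{-1}=\tau_{-1}\tau_1=\varphi^{-1}$, and of course $\varphi$ commutes with $\psi^2$. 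In both cases, then, $\varphi$ commutes with $\psi^2$, and there is an element $g\in\mc G$ (namely $\tau_1$ in the first case, $\psi$ in the second) with $g\varphi g^{-1}=\varphi^{-1}$ and $g\psi^2 g^{-1}=\psi^2$.

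Now suppose for contradiction that $\varphi$ is pseudo-Anosov. Applying Theorem~\ref{thm:mccarthy} to $\psi^2$: since $[\varphi]$ centralizes $[\psi^2]$ and is infinite order, there are $k>0$ and $m\neq 0$ with $[\varphi]^k=[\psi^2]^m$ in $\Mod^\pm(\Sigma)$; as $\varphi,\psi\in\mc G$ and $\mc G$ injects into $\Mod^\pm(\Sigma)$, this lifts to the equality $\varphi^k=\psi^{2m}$ in $\Homeo(\Sigma)$. Conjugating by $g$ and using $g\varphi^k g^{-1}=\varphi^{-k}$ and $g\psi^{2m}g^{-1}=\psi^{2m}$ gives $\psi^{2m}=\psi^{-2m}$, i.e. $\psi^{4m}=\mathrm{id}$ with $m\neq 0$ --- impossible for the pseudo-Anosov map $\psi$. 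Hence $\varphi$ is not pseudo-Anosov. Since $\varphi\in\mc G$ and every infinite-order element of the virtually cyclic group $\mc G$ has a power equal to a nonzero power of $\psi$ (again Theorem~\ref{thm:mccarthy}), and such a power would be pseudo-Anosov, we conclude that $\varphi$ has finite order. Thus $\widetilde M=M_\varphi$ with $\varphi$ periodic is the required double cover. (Alternatively, once $\widetilde M=M_\varphi$ is identified one may lift $f^2$ and invoke Proposition~\ref{prop:nonseparating} to get that $\varphi$ is pseudo-Anosov or periodic, then exclude the former exactly as above.)

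I expect the main obstacle to be twofold. The delicate bookkeeping is in identifying the double cover as a mapping torus whose monodromy is precisely $\tau_1\tau_{-1}$ \emph{as a homeomorphism}, not merely up to isotopy --- this honest identification is what lets the later parts of the argument extend the dynamics. The conceptual heart, however, is ruling out that $\varphi$ is pseudo-Anosov: a general semi-bundle can be hyperbolic, so its canonical double cover can have pseudo-Anosov monodromy, and it is exactly the McCarthy-type rigidity forced by the relations of Lemma~\ref{lem:semibundle} (which in turn rely on $\Sigma$ being a pseudo-Anosov surface) that excludes this.
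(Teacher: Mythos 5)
Your proposal is correct and follows essentially the same route as the paper: realize the canonical double cover of the semi-bundle as the mapping torus of $\varphi=\tau_{1}\tau_{-1}$ (built by gluing the trivial double covers $\hat C_{\pm1}$), observe that the $\tau_\epsilon$ conjugate $\varphi$ to $\varphi^{-1}$ while commuting with $\psi^2$, and use McCarthy's Theorem to show an infinite-order $\varphi$ would force $\varphi^k=\psi^{2m}$ in $\mc G$, which is incompatible with those relations. The only difference is cosmetic: the paper phrases the contradiction as $\tau_{\pm1}$ simultaneously commuting and anticommuting with $\varphi^l$, whereas you conjugate the identity $\varphi^k=\psi^{2m}$ by a suitable $g\in\mc G$ to get $\psi^{4m}=\mathrm{id}$.
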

\begin{proof}
  Let $\tau_{\pm 1}$ be the involutions from the conclusion of Lemma~\ref{lem:semibundle}.
  The semi-bundle $M$ is double covered by a mapping torus $M_\varphi$, where $\varphi = \tau_{-1}\tau_1$.  (This double cover can be constructed by gluing together the double covers $\hat C_{\pm 1}$ from the proof of Lemma~\ref{lem:semibundle}.)
  For each $\epsilon\in\{\pm 1\}$ and any $k$ we have
  \begin{equation}\label{eq:anticommute}\tau_\epsilon\varphi^k\tau_\epsilon = \varphi^{-k}.\end{equation}
  By way of contradiction, suppose that $\varphi$ has infinite order.  By McCarthy's Theorem~\ref{thm:mccarthy} the normalizer of $\psi^2$ is virtually generated by $\psi^2$.  In particular there are nonzero $l,m$ so that 
  $\varphi^l = (\psi^2)^m$.  But then by Lemma~\ref{lem:semibundle}, the $\tau_{\pm 1}$ commute with $\varphi^l$, contradicting~\eqref{eq:anticommute}.
\end{proof}

We will now prove Theorem \ref{thm:necessary} in the separating case:

\begin{proposition}\label{prop:separating}
  Suppose Assumption \ref{pA} holds and $\Sigma$ is separating. Then
    $M$ is Seifert fibered with large base orbifold and $\Sigma$ horizontal.
 Moreover, $f$ is isotopic rel $\Sigma$ to a partially pseudo-Anosov homeomorphism.
\end{proposition}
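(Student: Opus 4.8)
The plan is to prove the two structural assertions and then the ``moreover'' clause, mirroring the non-separating case of Proposition~\ref{prop:nonseparating} but with the two twisted $I$--bundles $C_{\pm 1}$ of the semi-bundle playing the role of the single product $\Sigma\times[0,1]$. By Lemma~\ref{lem:finiteorder}, $\varphi=\tau_{-1}\tau_1$ has finite order, so $D=\langle\tau_1,\tau_{-1}\rangle\le\Homeo(\Sigma)$ is finite (a quotient of the infinite dihedral group, being generated by two involutions whose product has finite order). The $I$--fibers of $C_1$ and $C_{-1}$ fit together along $\Sigma$ into a foliation of $M$ whose leaves are circles: the leaf through $y\in\Sigma$ runs alternately along an $I$--fiber of $C_1$ and one of $C_{-1}$ and meets $\Sigma$ exactly in the finite orbit $D\cdot y$, so it closes up. This circle foliation is a Seifert fibration of $M$ with $\Sigma$ horizontal (every leaf is transverse to $\Sigma$) and base orbifold $\mathcal{O}=\Sigma/D$; equivalently it is the push-down of the evident Seifert fibration of the mapping-torus double cover $M_\varphi$ of Lemma~\ref{lem:finiteorder}, whose deck transformation carries $I$--fibers to $I$--fibers and hence is fiber-preserving. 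For largeness, Lemma~\ref{lem:semibundle}\eqref{itm:taucommute} puts $\tau_{\pm 1}$ in $\mathcal{G}$ and makes them commute with $\psi$, so $D$ is a finite subgroup of the centralizer of $\psi$ in $\mathcal{G}$ and Lemma~\ref{lem:descends} gives that $\mathcal{O}$ is large. If instead $f$ exchanges the sides of $\Sigma$ one uses Lemma~\ref{lem:semibundle}\eqref{itm:tauswitch}: the $\tau_{\pm 1}$ lie in $\mathcal{G}$, commute with $\psi^2$, and satisfy $\psi\tau_\epsilon\psi^{-1}=\tau_{-\epsilon}$, so $D$ is finite, centralizes $\psi^2$, and is normalized by $\psi$, and the argument of Lemma~\ref{lem:descends} applies to $\psi^2$ (and also shows $\psi$ itself descends to a pseudo-Anosov $\bar\psi$ on $\mathcal{O}$).

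For the ``moreover'' clause, I would isotope $f$, rel $\Sigma$, to a homeomorphism preserving the Seifert fibration, working one piece at a time. On the trivial $I$--bundle double cover $\hat C_\epsilon=\Sigma\times[0,1]$ of $C_\epsilon$, Lemma~\ref{lem:semibundle}\eqref{itm:taucommute} says the induced map $\hat f$ agrees on $\partial\hat C_\epsilon$ with the product map $\hat h_\epsilon(x,t)=(\psi(x),t)$, which descends to a homeomorphism $h_\epsilon$ of $C_\epsilon$ preserving the $I$--bundle structure because $\psi$ commutes with $\tau_\epsilon$. Then $g_\epsilon:=h_\epsilon^{-1}\circ(\hat f|_{C_\epsilon})$ is the identity on $\partial C_\epsilon=\Sigma$ and hence induces the identity on the index-two subgroup $\pi_1\Sigma\le\pi_1 C_\epsilon$; since surface groups have unique roots, $g_\epsilon$ then induces the identity on all of $\pi_1 C_\epsilon$, so $g_\epsilon$ is homotopic to the identity rel $\partial C_\epsilon$, and Waldhausen's theorem (as invoked in Lemma~\ref{lem:wald1}) upgrades this to an isotopy rel $\partial C_\epsilon$. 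Gluing the resulting isotopies of the two pieces over $\Sigma$ gives an isotopy rel $\Sigma$ from $f$ to a homeomorphism $h$ of $M$ preserving each $I$--bundle, hence the Seifert fibration, with induced map on $\mathcal{O}$ the pseudo-Anosov $\bar\psi$ to which $\psi$ descends. When $f$ exchanges the sides of $\Sigma$ one runs the same argument with $\hat f$ on $\hat C_1\sqcup\hat C_{-1}$ and the relation $\psi=\tau_{-\epsilon}\psi\tau_\epsilon$ in place of $\psi\tau_\epsilon=\tau_\epsilon\psi$.

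Finally, since $\hat h_\epsilon=\psi\times\mathrm{id}_{[0,1]}$ is a product and $\tau_\epsilon$ preserves each of the two $\psi$--foliations (it lies in $\mathcal{G}$ and cannot swap them, as it commutes with $\psi$, resp.\ $\psi^2$), the splitting $E^s\oplus E^c\oplus E^u$---stable direction of $\psi$, interval direction, unstable direction of $\psi$---is $\tau_\epsilon$--invariant, descends to $C_\epsilon$, and glues over $\Sigma$, where the $E^c$'s of the two sides combine into a line field transverse to $\Sigma$. With a $\tau_\epsilon$--invariant flat metric on $\Sigma$ adapted to $\psi$ and any $h$--invariant metric along the circle fibers, $Dh$ contracts $E^s$ by $\lambda^{-1}$, expands $E^u$ by $\lambda$, and is isometric on $E^c$; hence $h$ is partially hyperbolic off the $h$--invariant $1$--manifold $C$ consisting of the finitely many Seifert fibers lying over the cone points of $\mathcal{O}$ and over the singular points of the foliations of $\bar\psi$, i.e.\ $h$ is partially pseudo-Anosov. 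I expect the delicate step to be the one carried out relative to $\Sigma$: recognizing $f$ on each twisted $I$--bundle half $C_\epsilon$ as a controlled modification of a product map via Lemma~\ref{lem:semibundle}, and then combining Waldhausen's isotopy theorem with the observation that a self-homeomorphism of $C_\epsilon$ fixing $\partial C_\epsilon=\Sigma$ pointwise is automatically $\pi_1$--trivial (using uniqueness of roots in the surface group $\pi_1 C_\epsilon$ together with $[\pi_1 C_\epsilon:\pi_1\Sigma]=2$). Verifying the partial-hyperbolicity estimates for the resulting $h$ is then the same local model---a pseudo-Anosov map times an isometry on the fiber---already used in the Seifert case of Proposition~\ref{prop:nonseparating}.
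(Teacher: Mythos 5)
Your proposal is correct and follows the same overall architecture as the paper: the semi-bundle structure and the involutions $\tau_{\pm 1}$ from Lemmas~\ref{lem:Ibundle} and~\ref{lem:semibundle}, finiteness of $\langle\tau_1,\tau_{-1}\rangle$ via Lemma~\ref{lem:finiteorder}, the Seifert fibration assembled from the $I$--fibers with base orbifold $\Sigma/\langle\tau_{\pm1}\rangle$ shown large by Lemma~\ref{lem:descends}, and an isotopy rel $\Sigma$ from $f$ to the model map induced by $\psi\times\mathrm{id}$. The one place you genuinely diverge is the Waldhausen step. The paper compares $\tilde f$ with the product model on the \emph{trivial double cover} $\Sigma\times[0,1]\times\{\pm1\}$, applies Lemma~\ref{lem:wald1} there, and then invokes the equivariant refinement Lemma~\ref{lem:wald2} to push the isotopy down to the twisted $I$--bundles. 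You instead work directly on each twisted half $C_\epsilon$: the comparison map fixes $\partial C_\epsilon=\Sigma$ pointwise, hence induces the identity on the index-two subgroup $\pi_1\Sigma\le\pi_1 C_\epsilon$, and uniqueness of roots in the torsion-free surface group $\pi_1 C_\epsilon$ forces it to be $\pi_1$--trivial; you then apply Waldhausen to the Haken manifold $C_\epsilon$ itself. This is a clean alternative that avoids Lemma~\ref{lem:wald2}, but note that the intermediate assertion ``$\pi_1$--trivial and boundary-fixing implies homotopic to the identity rel boundary'' is exactly the content the paper's two Waldhausen lemmas are engineered to supply (asphericity kills the higher obstructions, and centerlessness of $\pi_1 C_\epsilon$ handles the track of the homotopy on the boundary); you should either cite this or fall back on the paper's double-cover route. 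Your final paragraph verifying the partial-hyperbolicity estimates for the descended map, and your explicit remark that $\tau_\epsilon$ preserves each foliation individually because it centralizes $\psi$ (resp.\ $\psi^2$), supply details the paper leaves implicit.
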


For establishing the ``moreover" part of Proposition \ref{prop:separating}, we will also need the following refinement of Lemma \ref{lem:wald1}:

\begin{lemma}\label{lem:wald2}
  Let $\tau\from \Sigma\to \Sigma$ be a fixed point free involution, and let $\rho\from \Sigma\times I\to\Sigma\times I$ be given by $\rho(x,t) = (\tau(x),1-t)$.  
  If $g$ as in Lemma~\ref{lem:wald1} commutes with $\rho$, then the isotopy can be chosen to descend to the quotient manifold $Q = \Sigma\times I/\langle \rho\rangle$.
\end{lemma}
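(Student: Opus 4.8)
The plan is to run the proof of Lemma~\ref{lem:wald1} $\rho$-equivariantly, push the resulting homotopy down to $Q$, apply Waldhausen's theorem there, and lift the resulting isotopy back up.

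Writing $g(x,t)=(g_1(x,t),g_2(x,t))$, the commutation $g\rho=\rho g$ amounts to the two identities $g_1(\tau x,1-t)=\tau g_1(x,t)$ and $g_2(\tau x,1-t)=1-g_2(x,t)$. A short computation using them shows that the ``flattening'' homotopy $H_s(x,t)=(g_1(x,t),(1-s)g_2(x,t)+st)$ from the proof of Lemma~\ref{lem:wald1} is rel boundary and satisfies $H_s\rho=\rho H_s$ for every $s$, so we may assume $g$ is already fiber-preserving, $g(x,t)=(g'_t(x),t)$, with $g'_0=g'_1=\mathrm{id}_\Sigma$, each $g'_t\simeq\mathrm{id}_\Sigma$, and $g'_{1-t}=\tau g'_t\tau^{-1}$. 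For the straight-line step I would first choose a $\tau$-invariant hyperbolic metric on $\Sigma$ (pull back a hyperbolic structure on the orbifold $\Sigma/\langle\tau\rangle$), so that $\tau$ lifts to an isometry $\tilde\tau$ of $\bH^2$ normalizing the deck group. For each $t$ let $\tilde g'_t$ be the canonical lift of $g'_t$, i.e.\ the unique lift commuting with all deck transformations; it exists since $g'_t\simeq\mathrm{id}$, varies continuously in $t$ with $\tilde g'_0=\tilde g'_1=\mathrm{id}$, and by uniqueness satisfies $\tilde g'_{1-t}=\tilde\tau\tilde g'_t\tilde\tau^{-1}$ exactly. Sliding $\tilde g'_t(x)$ to $x$ along the geodesic between them gives maps $\tilde g'_{t,s}$ that descend to $\Sigma$ (deck-invariance) and satisfy $\tilde g'_{1-t,s}\circ\tilde\tau=\tilde\tau\circ\tilde g'_{t,s}$ (since $\tilde\tau$ is an isometry). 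Hence $G_s(x,t)=(g'_{t,s}(x),t)$ is a homotopy from $g$ to $\mathrm{id}$, rel $\partial(\Sigma\times I)$, with $G_s\rho=\rho G_s$ throughout.

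Concatenating $H$ and $G$ produces a homotopy $g\simeq\mathrm{id}$ rel boundary through maps commuting with $\rho$; it descends to a homotopy $\bar g\simeq\mathrm{id}_Q$ rel $\partial Q$, where $\bar g$ is the induced homeomorphism of $Q=\Sigma\times I/\langle\rho\rangle$. Since $Q$ is a compact twisted $I$-bundle over a surface of negative Euler characteristic, it is irreducible with incompressible boundary $\partial Q\cong\Sigma$, so Waldhausen's isotopy theorem~\cite[Theorem 7.1]{Waldhausen} promotes this to an isotopy $\Phi_s$ from $\bar g$ to $\mathrm{id}_Q$ rel $\partial Q$ (in our applications $\tau$ is orientation-reversing, so $Q$ is orientable; in any case Waldhausen's argument does not require orientability). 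Lifting $\Phi_s$ to $\Sigma\times I$ starting from the lift $g$: each $\Phi_s$ fixes $\partial Q$ and $g$ fixes $\partial(\Sigma\times I)$ pointwise, so the lifted isotopy fixes $\partial(\Sigma\times I)$ throughout, forcing its time-$1$ map to be $\mathrm{id}_{\Sigma\times I}$; and as a lift of $\Phi_s$ it descends to $Q$. I expect the only delicate point to be the equivariance bookkeeping in the second paragraph — in particular, using a $\tau$-invariant metric and the $\pi_1$-trivial lifts so that $\tilde g'_{1-t}=\tilde\tau\tilde g'_t\tilde\tau^{-1}$ holds on the nose rather than merely modulo deck transformations; once the homotopy is $\rho$-equivariant, everything that follows is routine.
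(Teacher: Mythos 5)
Your proof is correct and follows essentially the same route as the paper's: make the flattening homotopy and the straight-line homotopies from Lemma~\ref{lem:wald1} $\rho$-equivariant (via a $\tau$-invariant hyperbolic metric), descend to $Q$, apply Waldhausen's Theorem~7.1 there, and lift the resulting isotopy back to $\Sigma\times I$. You simply spell out the equivariance and lifting details that the paper leaves as "easy to check."
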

\begin{proof}
  Referring to the proof of Lemma~\ref{lem:wald1}, it is easy to check that the homotopy $H$ described descends to $Q$.  If we choose a hyperbolic metric on $\Sigma$ which is preserved by the involution $\tau$, it is also the case that he straight-line homotopies of the $g_t'$ to the identity descend to $Q$.

  The above homotopies show that the quotient map $\overline g\from Q\to Q$ is homotopic rel boundary to the identity on $Q$.  Again applying~\cite[Theorem 7.1]{Waldhausen} we conclude that $\overline g$ is \emph{isotopic} to the identity rel boundary.  Since $\overline g$ lifts to $\Sigma\times I$, so does the isotopy.
\end{proof}

\begin{proof}[Proof of Proposition \ref{prop:separating}]
  We have established in Lemma~\ref{lem:finiteorder} that $M$ is double covered by a finite order mapping torus, so $M$ is Seifert fibered.  In fact, the fibers of this fibration come from the $I$--bundle structures on the $C_{\pm 1}$ described in Lemma~\ref{lem:semibundle}.  In particular, $\Sigma$ is horizontal.  We also see from this description that the base orbifold is $\mathcal{O} = \Sigma / \langle \tau_{\pm 1}\rangle$, where $\tau_{\pm 1}$ are the involutions from Lemma~\ref{lem:semibundle}.  Since $\psi^2$ commutes with both $\tau_1$ and $\tau_2$, Lemma~\ref{lem:descends} implies that $\mathcal{O}$ is large as required.

  It remains to show that $f$ is isotopic rel $\Sigma$ to a partially pseudo-Anosov homeomorphism.  We express $M$ in coordinates, as a quotient of the space $\widehat{M\mminus \Sigma}$ used in the proof of Lemma~\ref{lem:semibundle}.  Namely
  \begin{equation} \label{eq:tilde}
    M = \Sigma \times [0,1] \times \{ \pm 1 \} / \sim,
  \end{equation}
  where $\sim$ is the equivalence relation generated by the deck transformation $(x,t,\epsilon) \sim r(x,t,\epsilon) = (\tau_\epsilon,1-t,\epsilon)$ together with the gluing of the zero levels $(x,0,\epsilon)\sim (x,0,-\epsilon)$.  Because $f(\Sigma) = \Sigma$, there is an induced map
  \[ \tilde f \from \Sigma \times [0,1]\times\{\pm 1\} \to \Sigma \times [0,1]\times\{\pm 1\}, \]
which commutes with $r$.
  
When $f$ preserves the sides of $\Sigma$ in $M$,
$\tilde f$ preserves the components of $\Sigma \times [0,1]\times\{\pm 1\}$.  We have
\begin{equation*}
  \tilde f(x,0,\epsilon) = (\psi(x),0,\epsilon),
\end{equation*}
and
\begin{equation*}
  \tilde f(x,1, \epsilon) = r \tilde f r (x,1,\epsilon) = (\tau_\epsilon \psi \tau_\epsilon(x),1,\epsilon) = (\psi(x),1,\epsilon).
\end{equation*}
The last equality is from Lemma~\ref{lem:semibundle}.\eqref{itm:taucommute}.
Let $h(x,t,\epsilon) = (\psi(x),t,\epsilon)$, and note that $h$ respects the equivalence relation in~\eqref{eq:tilde} so it descends to a partially pseudo-Anosov homeomorphism $\bar{h}\from M\to M$ preserving the sides of $\Sigma$.  We show that $f$ is isotopic rel $\Sigma$ to $\bar{h}$.
Indeed $h^{-1}\circ \tilde f$ restricts to the identity on $\partial(\Sigma \times [0,1]\times\{\pm 1\})$ so
Lemma~\ref{lem:wald1} implies that it is isotopic rel boundary to the identity.  Since $\tilde f$ and $h$ both commute with $r$, Lemma~\ref{lem:wald2} implies that the isotopy descends to $M\mminus \Sigma$.  The surface $\Sigma$ is fixed during this isotopy so we deduce that $\bar h^{-1}\circ f$ is isotopic rel $\Sigma$ to the identity, and finally that $f$ is isotopic to the partially pseudo-Anosov map $\bar h$.  This establishes the Proposition in case $f$ preserves the sides of $\Sigma$.

Suppose now that $f$ exchanges the sides of $\Sigma$, so that $\tilde f$ exchanges the components of $\Sigma \times [0,1]\times\{\pm 1\}$.  For each $\epsilon$ we have
\[ \tilde f(x,0,\epsilon) = (\psi(x),0,-\epsilon)\]
and
\begin{equation*}
  \tilde f(x,1,\epsilon) = r\tilde f r(x,1,\epsilon) = (\tau_{-\epsilon}\psi\tau_\epsilon(x),1,-\epsilon) = (\psi(x),1,-\epsilon).
\end{equation*}
The last equality is from Lemma~\ref{lem:semibundle}.\eqref{itm:tauswitch}.
Let $h(x,t,\epsilon) = (\psi(x),t,-\epsilon)$.  Again $h$ respects the equivalence relation in~\eqref{eq:tilde} and descends to a partially pseudo-Anosov homeomorphism $\bar{h}\from M\to M$, this time exchanging the sides of $\Sigma$.  Again we have $h^{-1}\circ\tilde f$ equal to the identity on $\partial(\Sigma \times [0,1]\times\{\pm 1\})$, so Lemma~\ref{lem:wald1} implies that it is isotopic rel boundary to the identity.  As before this implies that $f$ is isotopic rel $\Sigma$ to $\bar h$, establishing the Proposition in the case that $f$ exchanges the sides of $\Sigma$.
\end{proof}

Putting together Propositions~\ref{prop:nonseparating} and~\ref{prop:separating}, we have finished the proof of Theorem~\ref{thm:necessary}.

\section{Examples of incompressible pseudo-Anosov surfaces}\label{sec:existence}

We now give examples of maps realizing the surfaces described in Theorem~\ref{thm:classify} as pseudo-Anosov surfaces, completing the proof of that theorem.

\subsection{Mapping tori of pseudo-Anosov homeomorphisms}
We begin with the following elementary observation: 

\begin{lemma}\label{lem:commute implies pA}
  Let $\varphi$ and $\psi$ be homeomorphisms of the same surface which commute in the mapping class group (i.e. $\varphi\circ\psi\simeq \psi\circ\varphi$).  Let $M = M_\varphi$ be the mapping torus determined by $\varphi$, and let $\Sigma$ be a fiber of this mapping torus.  Then there is a homeomorphism of $M$ which restricts to $\psi$ on $\Sigma$.
\end{lemma}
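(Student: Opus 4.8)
The plan is to construct the homeomorphism by hand on the mapping torus, presented as $M_\varphi = \Sigma\times[0,1]/\!\sim$ with $(x,0)\sim(\varphi(x),1)$, where the fiber $\Sigma$ is identified with $\Sigma\times\{0\}$. The obvious candidate $(x,t)\mapsto(\psi(x),t)$ descends to $M_\varphi$ only if $\psi\varphi=\varphi\psi$ holds on the nose, whereas we are given only that $\varphi\circ\psi\simeq\psi\circ\varphi$. The idea is to spread an isotopy realizing this commutation out along the $I$--direction, so that the two endpoints match the two ways of crossing the gluing.

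Concretely, I would first fix an isotopy $\{h_t\}_{t\in[0,1]}$ of $\Sigma$ with $h_0=\psi$ and $h_1=\varphi\psi\varphi^{-1}$; this exists precisely because $\varphi\psi\varphi^{-1}$ is isotopic to $\psi$, which is the hypothesis. Such an isotopy produces a level-preserving homeomorphism $\hat f\from \Sigma\times[0,1]\to\Sigma\times[0,1]$, $\hat f(x,t)=(h_t(x),t)$ (its inverse is $(y,t)\mapsto(h_t^{-1}(y),t)$, which is again continuous since inversion is continuous in the homeomorphism group of a compact surface). I would then verify that $\hat f$ respects $\sim$: on the bottom $\hat f(x,0)=(\psi(x),0)$, and on the top $\hat f(\varphi(x),1)=(h_1(\varphi(x)),1)=(\varphi\psi(x),1)$, and indeed $(\psi(x),0)\sim(\varphi\psi(x),1)$. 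Hence $\hat f$ descends to a homeomorphism $f\from M_\varphi\to M_\varphi$, and by construction $f|_\Sigma=h_0=\psi$, which is exactly what is claimed.

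There is no serious obstacle here; the only care needed is in the bookkeeping of the gluing relation (checking that the forced value is $h_1=\varphi\psi\varphi^{-1}$ and not $\varphi^{-1}\psi\varphi$, which in turn uses that $\hat f$ preserves the top level) and in the standard fact that an isotopy of $\Sigma$ genuinely yields a homeomorphism of $\Sigma\times[0,1]$, i.e.\ that $t\mapsto h_t^{-1}$ is continuous. If $\varphi$ and $\psi$ happen to preserve orientation, the resulting $f$ does as well, automatically.
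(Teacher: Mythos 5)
Your proposal is correct and is essentially identical to the paper's proof: both define $f(x,t)=(h_t(x),t)$ using an isotopy $h_t$ from $\psi$ to $\varphi\psi\varphi^{-1}$ and verify compatibility with the gluing $(x,0)\sim(\varphi(x),1)$. Your extra remarks on continuity of $t\mapsto h_t^{-1}$ and on which conjugate is forced are sensible bookkeeping but do not change the argument.
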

\begin{proof}
  Given a homeomorphism $\varphi\from F \to F$ we describe the mapping torus as
  \[ M_\varphi = F \times [0,1] / (x,0)\sim (\varphi(x),1) .\]
  We take $\Sigma$ to be the fiber which is the image of $F\times 0$.  Let $H_t$ be an isotopy from $\psi$ to $\varphi \psi \varphi^{-1}$.  Define $f(x,t) = (H_t(x),t)$.  We verify this is well-defined:  for $x\in F$, we have
  \[f(x,0) = (\psi(x), 0 ) \sim (\varphi\psi(x),1) = (\varphi\psi\varphi^{-1}\varphi(x),1) = f(\varphi(x),1).\]
\end{proof}

Therefore, if $\Sigma$ is the fiber of a mapping torus $M_\varphi$, where $\varphi$ is pseudo-Anosov, then Lemma \ref{lem:commute implies pA} implies that  $\Sigma$ is a pseudo-Anosov surface, by taking $\psi = \varphi$. This covers item \eqref{itm:pamt} of Theorem \ref{thm:classify}.

\subsection{Seifert fibered examples}
We now give examples of homeomorphisms realizing surfaces as in~\eqref{itm:sfs} of Theorem~\ref{thm:classify}.

For the remainder of the subsection we fix the following:

\begin{assumption}
  $M$ is a Seifert fibered manifold with a two-sided horizontal surface $\Sigma$.
\end{assumption}

We first observe that $M$ must be the quotient of a product by a cyclic or finite dihedral group.

\begin{lemma}\label{lem:deck}
    There is a regular finite sheeted cover $\hat{M}\cong \Sigma\times S^1$ with deck group $\tilde G$ of one of the following two types (writing $S^1$ as $\bR/2\pi\bZ$):
  \begin{enumerate}
  \item For some $k>0$, there is an order $k$ automorphism $\varphi$ of $\Sigma$ so that $\tilde G$ is generated by $(x,\theta)\mapsto (\varphi(x),\theta + \frac{2\pi}{k})$.
  \item There is a pair of involutions $\tau_1,\tau_2$ of $\Sigma$ so that $\tau_1\tau_2$ has order $k$, and $\tilde G$ is generated by
    \[ (x,\theta)\mapsto (\tau_1(x), - \theta)\] and
    \[ (x,\theta)\mapsto (\tau_2(x),\frac{2\pi}{k} - \theta).\]
 \end{enumerate}
 The horizontal surface $\Sigma\subset M$ is the image of $\Sigma\times 0$ in $\Sigma\times S^1$.  Let $G$ be the subgroup of $\Homeo(\Sigma)$ obtained by ignoring the action of $\tilde G$ on the circle factor.
 The base orbifold of $M$ can be identified with $\Sigma/G$.
\end{lemma}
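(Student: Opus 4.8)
The plan is to cut $M$ along $\Sigma$, recognize the pieces as $I$--bundles exactly as in Section~\ref{part1}, and then use the Seifert fibration — rather than McCarthy's theorem — to show the resulting gluing data is of finite order.

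First I would use the classical fact that, since $\Sigma$ is transverse to every fibre, cutting the Seifert fibred manifold $M$ along $\Sigma$ produces an $I$--bundle whose $I$--fibres are the arcs into which $\Sigma$ divides the Seifert circles; away from the exceptional fibres this is immediate, and near an exceptional fibre one checks it in the standard fibred solid torus model (see e.g.\ \cite{Jaco}). As $\Sigma$ is connected, there are two cases. If $\Sigma$ is non-separating, $M\mminus\Sigma$ is a connected $I$--bundle with boundary two copies of $\Sigma$, hence a product; choosing a coordinatization $M\mminus\Sigma\cong\Sigma\times I$ that carries Seifert arcs to the segments $\{x\}\times I$, we obtain $M=M_\varphi$ for a homeomorphism $\varphi$ of $\Sigma$, in such a way that the Seifert circles of $M$ are precisely the $\varphi$--orbits of these segments. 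If $\Sigma$ is separating, each component of $M\mminus\Sigma$ is an $I$--bundle with connected boundary $\Sigma$, hence a twisted bundle $C_{\tau_i}$ ($i=1,2$) for a fixed point free involution $\tau_i$ of $\Sigma$ (Lemma~\ref{lem:Ibundle}), so $M=C_{\tau_1}\cup_\Sigma C_{\tau_2}$ is a semi-bundle, double covered by the mapping torus $M_\varphi$ with $\varphi=\tau_1\tau_2$ (and a lift of $\Sigma$ is horizontal there as well).

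Next I would show $\varphi$ is \emph{periodic}. Every Seifert circle is a circle, so every $\varphi$--orbit is finite; since every regular fibre of $M$ meets $\Sigma$ in the same number $n$ of points, $\varphi^{\,n}$ fixes each point of $\Sigma$ lying on a regular fibre, hence fixes a dense subset, and therefore $\varphi^{\,n}=\mathrm{id}$. Let $k$ be the order of $\varphi$. (In the separating case this is applied to the double cover, giving that $\tau_1\tau_2$ has finite order $k$.) Now I would construct the product cover. The $k$--fold cyclic cover of $M_\varphi$ in the bundle direction is $M_{\varphi^{k}}=M_{\mathrm{id}}=\Sigma\times S^1$, and unwrapping $\bR\to\bR/k\bZ\cong\bR/2\pi\bZ$ identifies its deck group with the cyclic group generated by $(x,\theta)\mapsto(\varphi^{\pm1}(x),\theta+\tfrac{2\pi}{k})$; replacing $\varphi$ by $\varphi^{\pm1}$ yields type~(1). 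In the separating case I would instead check directly that the order $2k$ group of homeomorphisms of $\Sigma\times S^1$ generated by $(x,\theta)\mapsto(\tau_1(x),-\theta)$ and $(x,\theta)\mapsto(\tau_2(x),\tfrac{2\pi}{k}-\theta)$ acts freely — it contains the cyclic group just described together with an involution covering the semi-bundle involution of $M$, and freeness holds because $\tau_1$, $\tau_2$ and all their conjugates in $\Homeo(\Sigma)$ are fixed point free — and has quotient $M$; this gives type~(2). In either case $\Sigma\subset M$ is the image of $\Sigma\times 0$.

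Finally, the cover $\Sigma\times S^1\to M$ just constructed is fibre-preserving: the product fibration of $\Sigma\times S^1$ by the circles $\{x\}\times S^1$ is the pullback of the given Seifert fibration of $M$, and it is $\tilde G$--invariant, so the given fibration of $M$ is its quotient and its base orbifold is $(\Sigma\times S^1)/\tilde G$ modulo the circle foliation, namely $\Sigma/G$ where $G<\Homeo(\Sigma)$ is the image of $\tilde G$. I expect the main obstacle to be the first step — verifying carefully that $M\mminus\Sigma$ is an $I$--bundle whose fibres are the Seifert arcs, including near the exceptional fibres — since this is what makes ``following a Seifert circle'' mean ``iterating $\varphi$''; granted that, the remaining steps are routine covering-space bookkeeping together with standard facts about Seifert fibred spaces.
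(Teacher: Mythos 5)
Your argument is correct and essentially identical to the paper's: both cut $M$ along the horizontal surface to obtain $I$--bundles, take the first-return map along the Seifert fibres, deduce finite order from the constant number of intersections of $\Sigma$ with a regular fibre, and realize $M$ as the free quotient of $\Sigma\times S^1$ by the resulting cyclic or dihedral group (the paper packages the separating case as the kernel of a homomorphism to the dihedral group of order $2k$ defined by intersection with the zero-sections, while you assemble the same cover from the semi-bundle double cover and its cyclic cover and verify freeness directly --- a cosmetic difference). One small point you inherit from the statement itself rather than introduce: with the generators as written in case (2), the element $(x,\theta)\mapsto(\tau_1(x),-\theta)$ stabilizes $\Sigma\times\{0\}$ and acts on it by $\tau_1$, so the image of $\Sigma\times\{0\}$ is $\Sigma/\langle\tau_1\rangle$ rather than $\Sigma$; the horizontal surface is really the image of a level such as $\Sigma\times\{\pi/(2k)\}$ (equivalently, the centers of the two reflections should be shifted off $\theta=0$), a phase convention worth fixing but harmless for the later applications of the lemma.
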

\begin{proof}
  There are two cases, depending on whether the horizontal surface $\Sigma$ is separating.

  In case $\Sigma$ is non-separating, the manifold $M$ is a mapping torus $M_\varphi$ where $\varphi$ is given by the first return map of the flow along the fibers.  The map $\varphi$ is well-defined and unique up to taking inverses, since $\Sigma$ is assumed to be two-sided.  The surface $\Sigma$ cuts a regular fiber into $k$ components for some $k\ge 1$, and this is the order of $\varphi$.  In particular, the $\bZ$--action on $\Sigma\times S^1$ via $(x,\theta) \mapsto (\varphi(x), \theta +\frac{2\pi}{k})$ descends to an action of $\bZ/k$ on $\Sigma\times S^1$.  The reader can easily see that the action is free with quotient $M$.

  Otherwise, $\Sigma$ separates $M$ into a pair of nontrivial $I$--bundles $M_1$ and $M_2$.  For each $i$ we obtain a first-return map $\tau_i\from \Sigma\to \Sigma$ obtained by following the fiber into the corresponding $I$--bundle.  Note that $\tau_i$ is a fixed-point free involution of $\Sigma$.  The surface $\Sigma$ cuts a regular fiber into $2k$ components, for some $k\ge 1$, and we see that $\tau_2\tau_1$ has order $k$.  Intersection with the zero-sections of the $I$--bundles gives a homomorphism to the dihedral group of order $2k$.  The cover corresponding to the kernel is a product, and the deck group acts freely as in the statement of the lemma.
\end{proof}

\begin{lemma}\label{lem:descend}
  If $\psi$ is a homeomorphism of $\Sigma$ which commutes with every element of $G$, where $G$ is as in Lemma \ref{lem:deck}, then the self homeomorphism of $\Sigma\times S^1$ defined by $\Psi(x,\theta)=(\psi(x),\theta)$ descends to a homeomorphism of $M$ which restricts to $\psi$ on $\Sigma$.
\end{lemma}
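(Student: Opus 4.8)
The plan is to deduce the lemma directly from the explicit presentation of $M$ as a quotient of $\Sigma\times S^1$ furnished by Lemma~\ref{lem:deck}: one shows that the product homeomorphism $\Psi=\psi\times\mathrm{id}_{S^1}$ commutes with the deck group $\tilde G$ of the covering $\hat M\to M$, hence descends to $M$, and then reads off its restriction to $\Sigma$ from the fact that $\Sigma\subset M$ is the image of $\Sigma\times\{0\}$.

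First I would recall from Lemma~\ref{lem:deck} that $M=\hat M/\tilde G$ with $\hat M\cong\Sigma\times S^1$, that $\tilde G$ is finite, acts freely, and is generated by the one or two explicit homeomorphisms listed there, and that $G<\Homeo(\Sigma)$ is precisely the set of ``$\Sigma$--parts'' of the elements of $\tilde G$. Since $\tilde G$ acts freely and $\Psi$ is manifestly a homeomorphism of $\hat M$, it suffices to check that $\Psi$ commutes with each generator of $\tilde G$ (commuting with the generators forces commuting with all of $\tilde G$). In the cyclic case, for the generator $\gamma(x,\theta)=(\varphi(x),\theta+\tfrac{2\pi}{k})$ one computes $\Psi\gamma(x,\theta)=(\psi\varphi(x),\theta+\tfrac{2\pi}{k})$ and $\gamma\Psi(x,\theta)=(\varphi\psi(x),\theta+\tfrac{2\pi}{k})$, and these agree because $\varphi\in G$ and $\psi$ commutes with every element of $G$. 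In the dihedral case the same one-line computation applied to $a(x,\theta)=(\tau_1(x),-\theta)$ and $b(x,\theta)=(\tau_2(x),\tfrac{2\pi}{k}-\theta)$ yields $\Psi a=a\Psi$ and $\Psi b=b\Psi$, using $\tau_1,\tau_2\in G$.

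Granting this equivariance, $\Psi$ descends to a continuous map $\bar\Psi\from M\to M$; applying the same argument to $\psi^{-1}\times\mathrm{id}_{S^1}$ produces a continuous two-sided inverse, so $\bar\Psi$ is a homeomorphism. For the restriction to $\Sigma$, note that $\Psi(\Sigma\times\{0\})=\Sigma\times\{0\}$ and that $\Psi$ acts there by $\psi$ under the identification $\Sigma\times\{0\}\cong\Sigma$. Since the covering projection carries $\Sigma\times\{0\}$ onto the horizontal surface $\Sigma\subset M$ and intertwines $\Psi$ with $\bar\Psi$, the map $\bar\Psi$ preserves $\Sigma\subset M$ and restricts there to the homeomorphism of $\Sigma$ induced by $\psi$ (in the cyclic case the projection $\Sigma\times\{0\}\to\Sigma\subset M$ is injective, so $\bar\Psi|_\Sigma=\psi$ exactly).

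There is essentially no serious obstacle here: the whole content is the equivariance check of the second paragraph, which is a short direct calculation. The only point requiring any care is to match the explicit generators of $\tilde G$ provided by Lemma~\ref{lem:deck} against the hypothesis that $\psi$ centralizes $G$ — and for both the cyclic and the dihedral normal forms this matching is immediate, since each generator of $\tilde G$ acts on the $\Sigma$--factor by an element of $G$.
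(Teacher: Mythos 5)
Your proof is correct and follows the same route as the paper: check that $\Psi$ commutes with the generators of the deck group $\tilde G$, which holds because $\psi$ centralizes $G$ on the $\Sigma$--factor and the identity commutes with everything on the circle factor, and then descend to $M$. The paper's own proof is just a terser version of exactly this equivariance check.
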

\begin{proof}
   We just need to check that the elements of $\tilde{G}$ commute with $\Psi$.  This is immediate, since on the $\Sigma$ factor the element $\psi$ commutes with $G$, and on the circle factor the identity commutes with everything.
\end{proof}

\begin{proposition}\label{prop:existSF}
  If $\Sigma/G$ is large, where $G$ is as in Lemma \ref{lem:deck}, then $\Sigma$ is a pseudo-Anosov surface in $M$.  Moreover, $\Sigma$ is realized by a partially pseudo-Anosov homeomorphism of $M$.
\end{proposition}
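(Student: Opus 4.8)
The plan is to build a pseudo-Anosov homeomorphism $\psi$ of $\Sigma$ that commutes with every element of the group $G$ of Lemma~\ref{lem:deck} (which is finite in either case of that lemma), then feed $\psi$ into Lemma~\ref{lem:descend} to obtain a homeomorphism $\bar\Psi$ of $M$ restricting to $\psi$ on $\Sigma$; the partial hyperbolicity of $\bar\Psi$ will already be visible on the product cover $\Sigma\times S^1$.

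First I would lift the orbifold dynamics to $\Sigma$. Since $\mathcal O=\Sigma/G$ is large it carries a pseudo-Anosov automorphism $\eta$, and by Lemma~\ref{lem:deck} the covering $\Sigma\to\mathcal O$ is a finite orbifold cover with deck group $G$, corresponding to the normal subgroup $\pi_1(\Sigma)\trianglelefteq\pi_1^{\mathrm{orb}}(\mathcal O)$ of index $|G|$. As $\pi_1^{\mathrm{orb}}(\mathcal O)$ is finitely generated it has finitely many index-$|G|$ subgroups, which $\eta_*$ permutes; since $\pi_1(\Sigma)$ is normal, some power $\eta^N$ satisfies $(\eta^N)_*\pi_1(\Sigma)=\pi_1(\Sigma)$ and therefore lifts to a homeomorphism $\psi_0\from\Sigma\to\Sigma$. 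The invariant transverse measured foliations of $\eta^N$ pull back along $\Sigma\to\mathcal O$ to transverse measured foliations on $\Sigma$; this cover ramifies only over the cone points, and there with order $\ge 2$, so no $1$--pronged singularities at smooth points are created and the pullbacks are honest pseudo-Anosov foliations, scaled by $\rho^{\pm1}$ under $\psi_0$, where $\rho>1$ is the dilatation of $\eta^N$. Hence $\psi_0$ is pseudo-Anosov; this is the argument of Lemma~\ref{lem:descends} read in the opposite direction.

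The lift $\psi_0$ need not commute with $G$, but it normalizes it: for $g\in G$ the map $\psi_0 g\psi_0^{-1}$ covers the identity on $\mathcal O$, hence lies in $G$. So conjugation by $\psi_0$ is an automorphism of the finite group $G$; if $m$ is its order in $\operatorname{Aut}(G)$, then $\psi:=\psi_0^{\,m}$ commutes with every element of $G$ and is still pseudo-Anosov (with the same invariant foliations). Lemma~\ref{lem:descend} then produces $\bar\Psi\from M\to M$ with $\bar\Psi|_\Sigma=\psi$, so $\Sigma$ is a pseudo-Anosov surface in $M$.

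It remains to check that $\bar\Psi$ is partially pseudo-Anosov. I would equip $\Sigma$ with the flat metric $|q|$ of the invariant quadratic differential $q$ of $\psi$; since $q$ is pulled back from the orbifold quadratic differential on $\mathcal O$, it and hence $|q|$ are $G$--invariant. Writing $\lambda>1$ for the dilatation of $\psi$ and $\operatorname{Sing}(\psi)$ for its finite singular set, $\psi$ is affine on $\Sigma\setminus\operatorname{Sing}(\psi)$ with derivative conjugate to $\operatorname{diag}(\lambda,\lambda^{-1})$, so on $(\Sigma\setminus\operatorname{Sing}(\psi))\times S^1$, carrying the product of $|q|$ with a round metric on $S^1$, the map $\Psi(x,\theta)=(\psi(x),\theta)$ is partially hyperbolic, with $E^s\oplus E^u$ the stable and unstable line fields of $\psi$ and $E^c=TS^1$. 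Since $\psi$ commutes with $G$, each $g\in G$ conjugates $\psi$ to itself, hence preserves its unique pair of invariant foliations and so the set $\operatorname{Sing}(\psi)$; thus $\operatorname{Sing}(\psi)\times S^1$ is invariant under the deck group $\tilde G$ and descends to an embedded closed $1$--manifold $C\subset M$ (the quotient of an embedded $\tilde G$--invariant submanifold by the free $\tilde G$--action is embedded). The metric and the splitting are $\tilde G$--invariant and descend, so $\bar\Psi$ is partially hyperbolic on $M\setminus C$; therefore $\bar\Psi$ is a partially pseudo-Anosov homeomorphism realizing $\Sigma$. The step I expect to be the main obstacle is the construction of $\psi$: it requires passing to a power of $\eta$ so that a lift to $\Sigma$ exists, checking that the pulled-back foliations remain valid across the branch locus, and then passing to a further power so that the lift centralizes $G$ rather than merely normalizing it.
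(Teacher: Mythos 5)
Your proof is correct and follows essentially the same route as the paper: lift a pseudo-Anosov map of $\Sigma/G$ to a power that commutes with $G$, then apply Lemma~\ref{lem:descend} to the product map on $\Sigma\times S^1$. You supply more detail than the paper does on two points it leaves implicit (why a power of $\eta$ lifts, and why the descended map is partially pseudo-Anosov away from the quotient of $\operatorname{Sing}(\psi)\times S^1$), but the underlying argument is identical.
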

\begin{proof}
  Let $\bar\psi\from \Sigma/G\to \Sigma/G$ be a pseudo-Anosov map.  Replacing $\bar\psi$ by a power if necessary, we may assume that $\bar\psi$ lifts to a pseudo-Anosov $\psi\from \Sigma\to \Sigma$.  

  Since $\psi$ is a lift of a map on $\Sigma/G$, we have that for any $g\in G$ there is some $g'\in G$ so that $g \psi = \psi g'$, or $\psi^{-1} g \psi = g'$.  In particular conjugation by $\psi$ acts as some automorphism of the finite group $G$.  Raising $\psi$ (and $\bar\psi$) to a positive power we may assume that $\psi$ commutes with every element of $G$.

  We define an automorphism $\Psi$ of $\Sigma \times S^1$ by $\Psi(x,t) = (\psi(x),t)$.  By Lemma~\ref{lem:descend}, this descends to an automorphism of $M$ with the desired properties.
\end{proof}

\section{Reducible 3-manifolds (Proof of Corollary~\ref{c:main})}\label{sec:reducible}

Suppose now $M$ is a closed reducible 3-manifold. By the Kneser-Milnor decomposition theorem, $M$ can be decomposed as a finite connected sum 
 $$M = M_ 1 \# M_ 2  \# \cdots \# M_n\#(\#_m S^1\times S^2),$$
 where each $M_i$ is irreducible and $m\geq 0$. 
 
 The following fundamental theorem on the mapping class groups of reducible 3-manifolds was proved by McCullough.

\begin{theorem} {\normalfont(\cite[pg. 69]{Mc}).} \label{Mc1} 
Let $M$ be a closed oriented connected 3-manifold. Any orientation-preserving homeomorphism of $M$ is  isotopic to a composition of the following types of homeomorphisms: 
\begin{itemize}
\item[(1)] homeomorphisms  preserving  summands;
\item[(2)] interchanges of homeomorphic summands;
\item[(3)] spins of $S^ 1 \times S^ 2$ summands;
\item[(4)] slide homeomorphisms.
\end{itemize}
\end{theorem}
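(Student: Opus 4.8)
The plan is to run the classical reduction for mapping class groups of reducible $3$--manifolds: fix a geometric model of the connected sum in which the prime decomposition is carried by an explicit system of essential $2$--spheres, normalize an arbitrary homeomorphism — after composing with moves of types (2)--(4) — so that it preserves this system, and then read the generators off from its action on the complementary pieces and on the gluing data. For the model, I would realize $M$ by puncturing each prime summand $M_i$ in one open ball to get $M_i^\circ$, puncturing $m$ copies of $S^1\times S^2$ once each, and gluing all their boundary spheres to the boundary spheres of a single $3$--sphere with $n+m$ open balls removed, attaching the two spheres coming from each $S^1\times S^2$ so as to create a nonseparating sphere. This produces a distinguished disjoint union $\mathcal S\subset M$ of essential spheres — the $n$ separating spheres $\partial M_i^\circ$ together with $m$ nonseparating ones — whose complement is the disjoint union of the $M_i^\circ$ with one ``central'' piece $Y$, a punctured $3$--sphere carrying $m$ one--handles.

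The first substantive step is to isotope a given orientation--preserving $f$ so that, after composing with finitely many slide homeomorphisms, $f(\mathcal S)=\mathcal S$. I would put $f(\mathcal S)$ transverse to $\mathcal S$ and eliminate the circles of intersection one at a time by innermost--disk surgeries, using that each $M_i$ is irreducible (every embedded sphere bounds a ball) and that every essential sphere in $Y$ is isotopic to a component of $\partial Y$. This makes $f(\mathcal S)$ isotopic to a sphere system disjoint from $\mathcal S$; since both systems realize the same prime decomposition, Laudenbach's uniqueness theorem for sphere systems (transitivity of the action of the mapping class group together with slides on systems of a fixed combinatorial type) expresses $f(\mathcal S)$ as the image of $\mathcal S$ under a composition of an isotopy and slides. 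Composing $f$ with the inverse slides, I may then assume $f(\mathcal S)=\mathcal S$.

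Now $f$ permutes the components of $\mathcal S$, hence the pieces of $M\setminus\mathcal S$, and it can only carry $M_i^\circ$ to a homeomorphic $M_j^\circ$; composing with interchanges of homeomorphic summands (type (2)) I may assume $f$ preserves each $M_i^\circ$, preserves $Y$, and preserves each component of $\mathcal S$. On each $M_i^\circ$ I cone the restriction $f|_{M_i^\circ}$ over the missing ball to get a homeomorphism of $M_i$, and hence, extending by the identity elsewhere, a homeomorphism of $M$ of type (1); the coning is well defined up to isotopy because an orientation--preserving self--homeomorphism of $B^3$ is isotopic to the identity, which removes the only ambiguity (the sphere twist). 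On $Y$ the mapping class group, relative to the boundary identifications, is generated by spins of the $S^1\times S^2$ handles (type (3)) and slides (type (4)) — again by Laudenbach's analysis of the mapping class groups of punctured connected sums of copies of $S^1\times S^2$. Assembling these contributions writes $f$, up to isotopy, as a composition of the four listed types.

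The main obstacle is the normalization step: making $f$ preserve $\mathcal S$ rests on the full strength of the Laudenbach--type uniqueness and transitivity statement for sphere systems, and one must carefully track which slides and interchanges are introduced so that nothing outside types (1)--(4) appears. The second delicate point is the computation of $\mathrm{MCG}(Y)$, where separating genuine spins and slides from products among them is exactly where the combinatorial heart of McCullough's argument lies.
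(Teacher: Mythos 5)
The paper does not prove this statement at all: it is quoted verbatim from McCullough~\cite[pg.~69]{Mc} and used as a black box, so there is no internal proof to compare yours against. What you have written is a reconstruction of the standard C\'esar de S\'a--Hendriks--Laudenbach--McCullough argument, and the overall architecture (fix a sphere system carrying the prime decomposition, normalize $f$ to preserve it, then read off generators from the complementary pieces) is the right one. But as written it is a reduction to Laudenbach's uniqueness theorem for sphere systems and to the computation of the mapping class group of the central piece --- which you correctly identify as ``the combinatorial heart'' --- rather than a proof; those two inputs are essentially the entire content of the theorem.

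Two concrete errors in the sketch. First, the model is misdescribed: puncturing a copy of $S^1\times S^2$ in one open ball produces \emph{one} boundary sphere, not two, so your instruction to ``attach the two spheres coming from each $S^1\times S^2$'' does not parse. The standard model instead deletes $n+2m$ open balls from a central $S^3$, glues the $M_i^\circ$ to $n$ of the resulting boundary spheres, and identifies the remaining $2m$ boundary spheres in pairs (by orientation-reversing homeomorphisms) to create the $S^1\times S^2$ summands; the central piece $Y$ is then a punctured connected sum of $m$ copies of $S^1\times S^2$. Second, with that corrected model your normalization step breaks where you assert that ``every essential sphere in $Y$ is isotopic to a component of $\partial Y$'': this is true only when $m=0$. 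For $m>0$ the piece $Y$ contains nonseparating spheres and infinitely many isotopy classes of essential separating ones, so innermost-disk surgery alone does not push $f(\mathcal S)$ off $\mathcal S$ inside $Y$. One genuinely needs Laudenbach's theorem that homotopic sphere systems are isotopic (and the transitivity of slides plus interchanges on systems realizing the decomposition) already at this stage, not merely as a final bookkeeping device. With those repairs the outline matches the published proof, but in the context of this paper the honest course is simply to cite~\cite{Mc}, as the authors do.
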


We refer to~\cite{Mc} for the description of the four types of homeomorphisms that appear in  above theorem. The proof of Theorem \ref{Mc1} contains in fact the following statement: 

\begin{theorem} \label{Mc2} 
If $M$ is a closed oriented connected 3-manifold and $f$ is an orientation-preserving homeomorphism of $M$, then 
$$hf  =g_3 g_2 g_1,$$ 
where $h$ is a finite composition of homeomorphisms of type $(4)$ (i.e., slide homeomorphisms) and isotopies on $M$, and each $g_k$ is a composition of finitely many homeomorphisms of $M$ of type $(k)$, $k=1,2,3$. 
\end{theorem}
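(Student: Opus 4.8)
The plan is to deduce the statement from McCullough's Theorem~\ref{Mc1} by sorting a composition of the four types of homeomorphisms into the asserted order, using commutation relations that are built into their geometric definitions in \cite{Mc}. All four types are defined relative to a fixed sphere system $\mathcal S\subset M$ realizing the prime decomposition; writing the pieces of $M$ cut along $\mathcal S$ as the punctured prime summands $\widehat M_i = M_i\setminus\mathring B^3$ together with a region $H$ carrying the $S^1\times S^2$ summands, the type-$(3)$ and type-$(4)$ homeomorphisms live in, or are routed through, $H$. I would use the following relations: the conjugate of a slide by a homeomorphism of type $(1)$, $(2)$ or $(3)$ is again a slide; the conjugate of a spin of an $S^1\times S^2$ summand by an interchange is again a spin, of the image summand; the conjugate of a spin by a summand-preserving homeomorphism is again that spin; the conjugate of an interchange by a summand-preserving homeomorphism is an interchange post-composed with a summand-preserving homeomorphism; and the conjugate of a homeomorphism isotopic to the identity by anything is isotopic to the identity. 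Each follows directly from the definitions, since a slide is specified by a summand together with a loop that carries it, an interchange exchanges two homeomorphic summands by a standard identification, and a spin is supported near a handle of $H$, disjoint from the $\widehat M_i$.

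Granting the relations, I would start from a decomposition of $f$, up to isotopy, into factors of the four types as furnished by Theorem~\ref{Mc1}, and bubble-sort it so that all type-$(4)$ factors come first, then all type-$(3)$ factors, then all type-$(2)$ factors, then all type-$(1)$ factors, with the identity-isotopic factors migrated to the front as well. Each adjacent transposition of an out-of-order pair replaces the earlier-type factor by its conjugate under the later-type factor; by the relations above this conjugate has the same type, except that in one case --- an interchange conjugated past a summand-preserving map --- a trailing summand-preserving factor appears, which is immediately merged with the adjacent summand-preserving factor. Thus the sequence of types changes only by the single transposition being performed, the number of factors never grows, and the number of out-of-order pairs strictly decreases; the sort terminates with $f = h^{-1} g_3 g_2 g_1$, where $h^{-1}$ is a product of slides and identity-isotopic maps, $g_3$ of spins, $g_2$ of interchanges, and $g_1$ of summand-preserving maps. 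Since every step is exact apart from the initial ``up to isotopy'', and McCullough's construction tracks genuine homeomorphisms rather than mapping classes, the residual isotopy is folded into $h$, giving the equation of homeomorphisms $hf = g_3 g_2 g_1$.

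The part demanding the most care is the verification of the commutation relations, and in particular that the only conjugation which fails to preserve the type outright is the interchange/summand-preserving one, and that even there the extra factor is of the later type, so that the bubble sort makes monotone progress. This is a finite check against the geometric definitions of the four types in \cite{Mc}. In fact the order asserted in the statement is essentially the order in which McCullough's own reduction of $f$ produces the factors, so an alternative --- presumably what the paper means by saying the statement is ``contained in'' that proof --- is to follow the reduction step by step and record the order directly; I would present whichever is shorter. The only genuinely non-formal ingredient either way is the geometric content of the four relations, extracted from \cite{Mc}.
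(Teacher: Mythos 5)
Your proposal is correct in outline but takes a genuinely different route from the paper. The paper offers no independent derivation of Theorem~\ref{Mc2}: it simply records that McCullough's proof of Theorem~\ref{Mc1} produces the factors in exactly this order (one first corrects $f$ by slide homeomorphisms and isotopies until the sphere system is preserved, and the remaining map then factors through spins, interchanges, and summand-preserving homeomorphisms), so the statement is read off from \cite{Mc} rather than deduced from the statement of Theorem~\ref{Mc1}. Your bubble sort instead post-processes the unordered conclusion of Theorem~\ref{Mc1}; this buys independence from the internals of \cite{Mc}, at the price of the conjugation relations, which you assert rather than verify. Those relations are standard (they are geometric counterparts of the Fouxe--Rabinovitch relations for automorphisms of free products), but they hold only up to isotopy: the conjugate of a slide, spin, or interchange by another generator is merely \emph{isotopic} to a standard representative of the asserted type, so each swap may introduce an identity-isotopic correction factor. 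Hence ``the number of factors never grows'' is not literally true, and the termination argument needs rephrasing --- for instance, sort in phases and note that every correction factor is itself conjugation-stable and migrates into $h$, which is legitimate precisely because $h$ is allowed to contain isotopies. With that bookkeeping repaired, and the relations checked against McCullough's precise definitions (especially how type (1) maps interact with the spins and the $S^1\times S^2$ handles), your argument goes through; your closing remark correctly identifies the route the paper actually takes.
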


Theorem \ref{Mc2} alone implies that $hf$ permutes the prime summands of $M$. The main result of~\cite{NeoWang} is that $h$ can be chosen so that its restriction on each $f$-invariant incompressible surface is the identity~\cite[Theorem 1.3]{NeoWang}. An immediate consequence of that result and of its proof is the following:

\begin{corollary}\label{preserve summands}
Let  $f$ be an orientation-preserving homeomorphism of a closed oriented connected 3-manifold $M$ and $F\subset M$ an incompressible surface with $f(F)=F$.
Then $F$ can be isotoped into a prime summand of $M$ so that the homeomorphism $hf$ of $M$ 
preserves this prime summand and $F$, 
where $h$ is a finite composition of slide homeomorphisms and isotopies.
\end{corollary}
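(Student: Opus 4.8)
The plan is to obtain this statement as a short corollary of \cite[Theorem 1.3]{NeoWang} together with Theorem~\ref{Mc2} and the elementary fact that an incompressible surface in a reducible $3$--manifold can be isotoped off a system of reducing spheres.

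First I would fix a maximal system $\mathcal{S}=S_1\sqcup\cdots\sqcup S_r$ of pairwise disjoint, pairwise non-parallel essential $2$--spheres in $M$ realizing the prime decomposition, so that capping the components of $M\setminus\mathcal{S}$ with balls recovers the prime summands $M_1,\dots,M_n$ (together with some copies of $S^3$), and each component of $M\setminus\mathcal{S}$ includes canonically into one of the $M_i$. Using the incompressibility of $F$, a standard innermost-disk argument on the circles of $F\cap\mathcal{S}$ (each such circle bounds a disk in the sphere it lies on, hence, by incompressibility, a disk in $F$, across which one isotopes $F$ to reduce the intersection) lets me isotope $F$ until $F\cap\mathcal{S}=\emptyset$; then $F$ lies in a single component of $M\setminus\mathcal{S}$, and hence in a single prime summand, which I call $N$. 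This normalization is exactly the one performed at the start of the proof of \cite[Theorem 1.3]{NeoWang}.

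Next I would invoke \cite[Theorem 1.3]{NeoWang}: there is a finite composition $h$ of slide homeomorphisms and ambient isotopies of $M$ so that $hf$ has the form $g_3g_2g_1$ of Theorem~\ref{Mc2}, and so that $h$ restricts to the identity on the $f$--invariant surface $F$. Since $f(F)=F$ and $h|_F=\mathrm{id}_F$, we get $hf(F)=h(F)=F$, so $hf$ preserves $F$. Moreover each $g_k$ respects $\mathcal{S}$ (up to an isotopy which we absorb into $h$): homeomorphisms of type~$(1)$ preserve the summands, those of type~$(2)$ interchange homeomorphic summands, and those of type~$(3)$ are supported near an $S^1\times S^2$ summand. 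Thus $hf$ permutes the components of $M\setminus\mathcal{S}$. Let $V$ be the component containing $F$, so that $F\subset V$ and $V$ caps off to the prime summand $N$. Then $hf(V)$ is again a component of $M\setminus\mathcal{S}$ and it contains $hf(F)=F$; since distinct components are disjoint and $F\neq\emptyset$, we must have $hf(V)=V$, and hence $hf$ preserves $N$.

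The step I expect to require the most care is arranging the three conditions simultaneously: that $F$ lie in a prime summand, that $hf$ respect the sphere system $\mathcal{S}$, and that $h$ be the identity on $F$. Taken naively these interfere with one another (isotoping $F$ into a summand destroys $hf$--invariance of $F$; isotoping $hf$ to respect $\mathcal{S}$ changes $h$ along $F$), and reconciling them is precisely the bookkeeping carried out in the proof of \cite[Theorem 1.3]{NeoWang} --- one first pushes $F$ off $\mathcal{S}$ and then constructs $h$ rel $F$ while keeping track of the induced permutation of summands. This is why the corollary follows from that theorem \emph{and its proof}. Alternatively, one can absorb the initial isotopy pushing $F$ into $N$ into the isotopy part of $h$, using that a conjugate of a slide homeomorphism by an ambient isotopy is again (isotopic to) a slide homeomorphism, and that a conjugate of $f$ by an ambient isotopy differs from $f$ by an ambient isotopy.
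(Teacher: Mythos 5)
Your proposal is correct and follows exactly the route the paper intends: the paper gives no explicit proof, stating only that the corollary is ``an immediate consequence'' of \cite[Theorem 1.3]{NeoWang} (giving $h$ with $h|_F=\mathrm{id}$) together with Theorem~\ref{Mc2} (giving that $hf$ permutes the prime summands), and your write-up is a faithful unpacking of precisely those two ingredients, including the correct observation that the simultaneous normalizations are the bookkeeping done in the proof of \cite[Theorem 1.3]{NeoWang}.
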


Therefore, once we know a complete list of prime 3-manifolds that admit a pseudo-Anosov incompressible surface, then we derive a complete list of 3-manifolds admitting such a surface:

\begin{proposition}\label{p:reducible}
A closed oriented reducible 3-manifold admits a pseudo-Anosov surface if and only if one of its prime summands admits such a surface.
\end{proposition}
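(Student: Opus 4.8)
\textbf{Proof plan for Proposition~\ref{p:reducible}.}

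The plan is to prove the two implications separately, using Corollary~\ref{preserve summands} for the harder direction and a direct construction for the easier one.

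For the ``if'' direction, suppose some prime summand $M_i$ admits a pseudo-Anosov surface $\Sigma$, realized by a homeomorphism $g\from M_i\to M_i$. I would first arrange (by isotoping $\Sigma$ if necessary) that $\Sigma$ lies in the interior of $M_i$ minus a small ball $B$ used in forming the connected sum, and that $g$ is supported away from $B$; this is possible because $\Sigma$ is incompressible and hence can be pushed off any ball, and because any self-homeomorphism of $M_i$ is isotopic to one fixing $B$ pointwise. Then $g$ extends by the identity across the connected-sum region to a homeomorphism $f$ of $M = M_1 \# \cdots \# M_n \#(\#_m S^1\times S^2)$ with $f(\Sigma) = \Sigma$ and $f|_\Sigma = g|_\Sigma$ pseudo-Anosov. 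Since $\Sigma$ was incompressible in $M_i$ and lies in a prime summand, it remains incompressible in $M$ (a standard fact: an incompressible surface in a summand stays incompressible in the connected sum, as any compressing disk in $M$ can be isotoped off the connecting spheres). Hence $\Sigma$ is a pseudo-Anosov surface in $M$.

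For the ``only if'' direction, suppose $M$ admits a pseudo-Anosov surface $\Sigma$ realized by $f$. After replacing $f$ by $f^2$ if necessary we may assume $f$ is orientation-preserving (the square still restricts to a pseudo-Anosov map on $\Sigma$, using that a power of a pseudo-Anosov map is pseudo-Anosov). Apply Corollary~\ref{preserve summands} with $F = \Sigma$: there is a homeomorphism $h$, a finite composition of slide homeomorphisms and isotopies, such that $\Sigma$ is isotoped into a prime summand $M_i$ and $hf$ preserves both $M_i$ and $\Sigma$. The key point is that the isotopy moving $\Sigma$, together with $h$, conjugates the dynamics: the restriction $(hf)|_{\Sigma}$ is conjugate (via the homeomorphism of $\Sigma$ tracking the isotopy) to $f|_{\Sigma}$, hence is again pseudo-Anosov. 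Since $hf$ preserves the prime summand $M_i$, which we may regard as a closed irreducible $3$--manifold (replacing its puncture/connect-sum ball by a ball), restricting $hf$ to $M_i$ gives a homeomorphism of $M_i$ realizing $\Sigma$ as a pseudo-Anosov surface. Thus $M_i$ is a prime summand admitting a pseudo-Anosov surface.

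The main obstacle is the bookkeeping in the ``only if'' direction: one must check that the slide homeomorphisms and isotopies provided by \cite[Theorem 1.3]{NeoWang} really do carry the pseudo-Anosov structure along, i.e. that $(hf)|_\Sigma$ is topologically conjugate to $f|_\Sigma$ rather than merely isotopic to it, and that after cutting along the connecting spheres the restriction of $hf$ to the summand $M_i$ is a genuine self-homeomorphism of the closed manifold $M_i$ (not just of $M_i$ minus a ball). Both are routine given that $\Sigma$ and the support of $hf$ can be taken disjoint from the connecting spheres, but they are the places where care is needed; everything else is formal.
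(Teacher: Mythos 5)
Your overall strategy is exactly the paper's: the ``only if'' direction is a direct application of Corollary~\ref{preserve summands} (the paper additionally uses that, by \cite[Theorem 1.3]{NeoWang}, $h$ can be chosen to restrict to the \emph{identity} on the invariant surface, so $(hf)|_\Sigma=f|_\Sigma$ literally, which is cleaner than your conjugation bookkeeping), and the ``if'' direction is the same extend-by-the-identity construction across the connect-sum sphere.

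There is one genuine misstep in your ``if'' direction: the claim that ``any self-homeomorphism of $M_i$ is isotopic to one fixing $B$ pointwise'' is false. A homeomorphism isotopic to one that is the identity on a ball is necessarily orientation-preserving, so your argument breaks down when the realizing homeomorphism $g$ is orientation-reversing. Moreover, the isotopy must be performed \emph{rel} $\Sigma$ (otherwise the modified map need not preserve $\Sigma$, let alone restrict to a pseudo-Anosov on it), and for an isotopy rel $\Sigma$ to carry $g(B)$ back to $B$ one also needs $g$ to preserve the component of $M_i\setminus\Sigma$ containing $B$. Both issues are repaired at once by the paper's device, which you already use in the other direction: replace $g$ by $g^2$, which still restricts to a pseudo-Anosov map on $\Sigma$, preserves orientation, and preserves the (at most two) sides of $\Sigma$; only then can one isotope rel $\Sigma$ to a map that is the identity on a ball disjoint from $\Sigma$ and glue in the remaining summands. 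With that correction your proof matches the paper's.
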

\begin{proof}
  The ``only if'' part was discussed above. The converse can easily be shown: Let $\Sigma$ be a pseudo-Anosov surface in a closed oriented 3-manifold $M$, and $f\colon M\to M$ be a homeomorphism so that $f\vert_\Sigma$ is pseudo-Anosov.  After possibly replacing $f$ with its square, we may assume that $f$ preserves the components of $M\mminus \Sigma$ and preserves the orientation on $M$.  This implies that $f$ is isotopic rel $\Sigma$ to a map $\widetilde{f}$ which is the identity on a smooth closed ball $B$ disjoint from $\Sigma$.
  Let $N$ be any closed oriented 3-manifold, let $B'$ be any smooth closed ball in $N$.  Use the balls $B,B'$ to form the connect sum:
  \[ M\#N=(M\setminus \Interior{B})\cup_{\partial B\cong\partial B'}(N\setminus \Interior{B'}). \]
  Define  
\[
g\colon M\#N\longrightarrow M\#N
\]
by $g\vert_{M\setminus \Interior{B}}:=\widetilde f$ and $g\vert_{N\setminus \Interior{B'}}=\mathrm{id}$.  Then $g$ is a homeomorphism and $g\vert_\Sigma=f\vert_\Sigma$ is pseudo-Anosov.
\end{proof}

The proof of Corollary \ref{c:main} is now complete.

 \section{Compressible pseudo-Anosov surfaces}\label{sec:compressible}
By a \emph{compressible pseudo-Anosov} surface in a three-manifold $M$, we mean a two-sided, non-$\pi_1$--injective surface $\Sigma\subseteq M$ so that there is a homeomorphism of pairs $h\from (M,\Sigma)\to (M,\Sigma)$ restricting to a pseudo-Anosov map on $\Sigma$.
In contrast to the incompressible case, compressible pseudo-Anosov surfaces are extremely common.

\begin{lemma}
  The standard genus 2 Heegaard surface in $S^3$ is compressible pseudo-Anosov. 
\end{lemma}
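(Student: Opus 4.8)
The plan is to exhibit an explicit homeomorphism $h$ of $S^3$ preserving the standard genus $2$ Heegaard surface $\Sigma$ and restricting to a pseudo-Anosov map there. The standard genus $2$ splitting $S^3 = V_1 \cup_\Sigma V_2$ has the property that its mapping class group maps onto a large subgroup of $\Mod^\pm(\Sigma)$; concretely, a mapping class of $\Sigma$ extends over $S^3$ (in the sense of extending over both handlebodies simultaneously, possibly after swapping them) precisely when it lies in the so-called \emph{Goeritz group} together with the extra symmetry swapping $V_1$ and $V_2$. So the first step is to identify, inside $\Mod^\pm(\Sigma)$, a subgroup $H$ consisting of mapping classes that extend over the splitting, and then to produce a pseudo-Anosov element of $H$.

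The cleanest way to do this is via the hyperelliptic picture. Present $\Sigma$ as the double cover of $S^2$ branched over $6$ points, so that the hyperelliptic involution $\iota$ is central in $\Mod^\pm(\Sigma)$ and $\Mod^\pm(\Sigma)/\langle\iota\rangle$ is (up to the center) the mapping class group of the $6$--punctured sphere. The standard genus $2$ Heegaard surface of $S^3$ can be arranged to be $\iota$--invariant in such a way that $\iota$ extends over $S^3$ (it is a rotation of $S^3$), and moreover a half-twist swapping two adjacent branch points lifts to a homeomorphism of $\Sigma$ that extends over the splitting — this is the classical fact that the Goeritz group of the genus $2$ splitting of $S^3$ surjects (modulo $\iota$) onto a braid-type group on the $6$ points. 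Therefore it suffices to find a word in these half-twists whose image in $\Mod(S_{0,6})$ is pseudo-Anosov — equivalently, to find a pseudo-Anosov braid on $6$ strands all of whose generators are the appropriate ``extendable'' half-twists — and then lift it. A standard choice, e.g.\ a suitable product of half-twists along an alternating chain of the $6$ points, is pseudo-Anosov on $S_{0,6}$; its lift $\psi$ to $\Sigma$ is then pseudo-Anosov (a finite cover of a pseudo-Anosov map, with foliations pulled back, is pseudo-Anosov), and by construction $\psi$ is realized by a homeomorphism $h$ of $S^3$ with $h(\Sigma) = \Sigma$.

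I would organize the write-up as: (i) set up the branched-cover model of $\Sigma$ and recall that the genus $2$ splitting of $S^3$ is equivariant, so that certain half-twists of the $6$ branch points are realized by homeomorphisms of $S^3$ preserving $\Sigma$; (ii) quote or cite the standard description of the Goeritz group of the genus $2$ splitting (Goeritz, Scharlemann, or Akbas) to justify that these half-twists genuinely extend; (iii) pick a concrete pseudo-Anosov braid $\beta$ on $6$ strands built from these generators and cite (e.g.\ \cite{FLP}, or Penner/Thurston constructions) that it is pseudo-Anosov on $S_{0,6}$; (iv) lift $\beta$ to $\psi$ on $\Sigma$ and observe $\psi$ is pseudo-Anosov and extends over $S^3$. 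Finally, $\Sigma$ is compressible in $S^3$ since it bounds handlebodies on both sides, so it is a compressible pseudo-Anosov surface in the sense defined above.

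The main obstacle is step (ii): making precise \emph{which} homeomorphisms of $\Sigma$ extend over the genus $2$ splitting of $S^3$, and checking that the generators of the chosen braid lie among them. This is where one must be careful — an arbitrary half-twist of branch points need not extend, only those half-twists corresponding to arcs in $S^2$ that can be isotoped into the ``bridge sphere'' picture of the splitting. The honest fix is to choose the six branch points lying on a round circle in $S^2$ in the position where $\Sigma$ is the boundary of the $\epsilon$--neighborhood of a standard unknotted wedge of two circles, so that adjacent half-twists visibly extend, and to choose $\beta$ as a product of \emph{adjacent} half-twists only; the remaining content is then purely the (well-documented) fact that an alternating product of all five adjacent half-twists on $S_{0,6}$, or a Penner-type product of two multitwists supported on filling curve systems, is pseudo-Anosov. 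Everything else in the argument is routine covering-space and Heegaard-surface bookkeeping.
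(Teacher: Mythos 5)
Your overall strategy---find mapping classes of $\Sigma$ that extend over the splitting and assemble a pseudo-Anosov from them---is the right one, and the hyperelliptic-quotient route is genuinely different from the paper's. But the load-bearing step (ii) has a real gap, and your proposed fix does not close it. The claim that adjacent half-twists of the six branch points extend over the genus $2$ splitting is not a classical fact, and as stated it fails. In the quotient picture the splitting corresponds to a bridge sphere for the unknot in $3$--bridge position: the six points are paired into three overbridges in the upper ball and three underbridges in the lower ball, and these two pairings interleave. A half-twist along an arc parallel to an overbridge, say swapping the points $1,2$ joined by an overbridge, does not preserve the underbridge pairing $\{(6,1),(2,3),(4,5)\}$, so it admits no extension to the lower $(\mbox{ball},\mbox{tangle})$ pair, and the branched-cover construction produces no extension of its lift over the lower handlebody. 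Equivalently, upstairs that lift is the Dehn twist about the preimage curve, which is non-separating (it is one of the standard chain curves), hence cannot bound disks in both handlebodies; it bounds a meridian disk above but not below, and a twist about such a curve does not preserve the lower handlebody's disk set. So a chain of adjacent half-twists is simply not available inside the Goeritz group, and the known generating sets (Goeritz, Scharlemann \cite{Sch04}, Akbas, Cho) are not of this form. Note also that whether a Goeritz group contains a pseudo-Anosov element is precisely the nontrivial content here (see the Question closing Section~\ref{sec:compressible}), so it cannot be quoted as known.

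The paper avoids this issue entirely by twisting only along curves of the form $S\cap\Sigma$ for $S$ a $2$--sphere meeting $\Sigma$ transversely in one circle: such a twist extends over $S^3$ by an explicit rotation supported in $S\times I$, and one such curve $\alpha$ together with its mirror image $\beta$ already fills $\Sigma$, so Penner's criterion applies. If you want to keep your branched-cover framework, the honest repair is to replace your half-twists by the images in $\Mod(S_{0,6})$ of these reducing-sphere twists (twists about curves separating the six points $3+3$) and check filling downstairs---at which point you have essentially reconstructed the paper's argument in the quotient rather than found an independent one.
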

\begin{proof}
  Let $\alpha$ be the simple closed curve pictured in Figure~\ref{fig:genus2curve}.
  \begin{figure}[htbp]
    \hspace*{3cm}\includegraphics{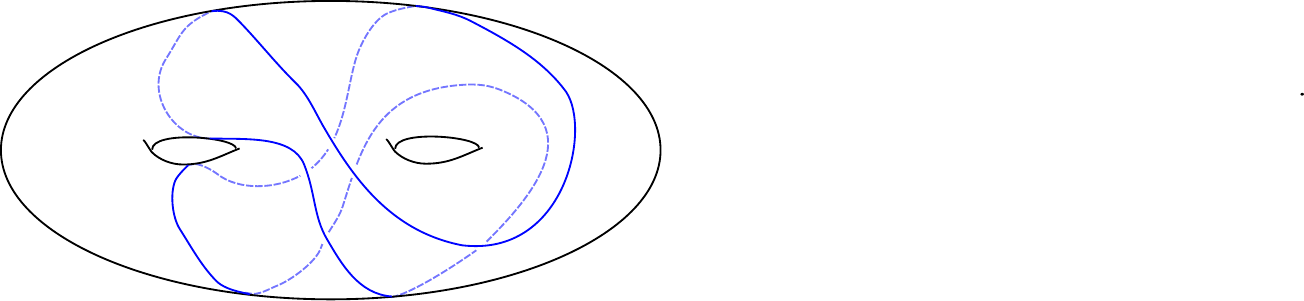}
    \caption{The curve $\alpha$.}
    \label{fig:genus2curve}
  \end{figure}
  Let $\beta$ be the curve obtained from $\alpha$ by reflecting the picture left-to-right.  The reader may verify the following:
  \begin{itemize}
  \item $\alpha$ is the transverse intersection of a $2$--sphere with $\Sigma$ (and hence so is $\beta)$.
  \item The pair of curves $\alpha,\beta$ fill the surface $\Sigma$.
  \end{itemize}
  For any sphere $S$ transversely intersecting the Heegaard surface $\Sigma$ in a single curve, there is a homeomorphism of $S^3$ supported in that sphere, preserving $\Sigma$ and restricting to a Dehn twist around $\Sigma\cap S$.  Indeed, there is a neighborhood $N$ of $S$ homeomorphic to $S\times I$, by a homeomorphism taking $\Sigma\cap N$ to $\alpha\times I$ for $\alpha$ a simple closed curve on $S$.  Think of $S$ as the round $2$--sphere in $\bR^3$, and $\alpha$ as the equator.  Let $r_t$ be rotation in angle $2\pi t$ preserving the equator, and define $T \from N \to N$ by $T(x,t) = (r_t(x),t)$.  The map $T$ restricts to the identity on $\partial N$, and to a Dehn twist in the annulus $N\cap \Sigma$.  (See \cite{Sch04} for a fuller discussion of the group of isotopy classes of homeomorphisms of $S^3$ preserving a genus two Heegaard splitting, also called the \emph{Goeritz group} of that splitting.)

  By a result of Penner~\cite{Penner}, the product of a positive Dehn twist around $\alpha$ with a negative Dehn twist around $\beta$ is isotopic to a pseudo-Anosov homeomorphism $\psi$ of $\Sigma$.  This isotopy can be extended to a small neighborhood of $\Sigma$, and so we obtain a homeomorphism $f\from S^3\to S^3$, preserving $\Sigma$, so that $f|\Sigma$ is the pseudo-Anosov map $\psi$.
\end{proof}

Since the homeomorphism $f$ constructed in the proof can be chosen to be the identity on a ball disjoint from $\Sigma$, we can embed this example inside any $3$--manifold at all, and we obtain:

\begin{corollary}
  Every $3$--manifold contains a compressible genus 2 pseudo-Anosov surface.
\end{corollary}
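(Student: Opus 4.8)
The plan is to transplant the example from the previous lemma into an arbitrary $3$--manifold by a connected-sum trick, exactly as the remark preceding the statement suggests. The key point is that the homeomorphism $f\from S^3\to S^3$ built there was supported in an arbitrarily small neighborhood $U$ of the genus $2$ Heegaard surface $\Sigma$; in particular $f$ is the identity on the complement of $U$, so after shrinking $U$ we may fix a smooth closed ball $B\subset S^3$ disjoint from $\overline U$ on which $f=\mathrm{id}$.

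Given a $3$--manifold $N$, I would pick a smooth closed ball $B'\subset N$ and form
\[ S^3\# N \;=\; (S^3\setminus\mathring B)\ \cup_{\partial B\cong\partial B'}\ (N\setminus\mathring{B'}) \;\cong\; N. \]
Under this identification $\Sigma$ becomes an embedded genus $2$ surface in $N$ lying entirely on the $S^3$ side. Define $g\from N\to N$ by $g:=f$ on $S^3\setminus\mathring B$ and $g:=\mathrm{id}$ on $N\setminus\mathring{B'}$. Since $f$ restricts to the identity near $\partial B$, the two pieces agree along the gluing sphere, so $g$ is a well-defined self-homeomorphism of $N$ with $g(\Sigma)=\Sigma$ and $g|_\Sigma$ equal to the pseudo-Anosov map $\psi$ from the lemma.

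Finally I would check that $\Sigma$ is still compressible in $N$, i.e. that $\pi_1\Sigma\to\pi_1 N$ fails to be injective. In $S^3$ the surface $\Sigma$ bounds a genus $2$ handlebody on each side; since $B$ may be taken inside one of these handlebodies, a meridian disk of the other handlebody $V$ is a compressing disk for $\Sigma$ disjoint from $B$, hence lying in $S^3\setminus\mathring B\subset N$. Thus an essential simple closed curve on $\Sigma$ bounds a disk in $N$, so $\Sigma$ is a compressible genus $2$ pseudo-Anosov surface in $N$, as claimed. The only step needing a moment's thought is this last one — making sure the connect-sum surgery cannot accidentally render $\Sigma$ $\pi_1$--injective — but since the connect-sum ball can be kept disjoint from a handlebody bounded by $\Sigma$, the compressing disk evidently survives, and the rest is formal.
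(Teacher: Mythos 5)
Your argument is correct and is exactly the paper's (one-line) proof: the realizing homeomorphism of $S^3$ is the identity on some ball, so one connect-sums any $N$ in along that ball; your extra check that a compressing disk survives (by keeping the connect-sum ball away from a handlebody side of $\Sigma$) is a worthwhile detail the paper leaves implicit. One small correction: the support of $f$ is not contained in a small neighborhood of $\Sigma$ alone, since the two twists are supported in neighborhoods of the entire spheres $S_\alpha$, $S_\beta$; this does not affect your argument, as a ball disjoint from $\Sigma\cup S_\alpha\cup S_\beta$ still exists.
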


A complete classification of compressible pseudo-Anosov surfaces seems difficult.
As an example, for Heegaard surfaces one is required to answer the following question.

\begin{question}
  For what Heegaard splittings $(M,\Sigma)$ does the Goeritz group contain a pseudo-Anosov homeomorphism?
\end{question}


\begin{thebibliography}{CRHRHU18}

\bibitem[Ber78]{Bers78}
L.~Bers.
\newblock An extremal problem for quasiconformal mappings and a theorem by
  {T}hurston.
\newblock {\em Acta Math.}, 141(1-2):73--98, 1978.

\bibitem[BFP23]{BFP23}
T.~Barthelm\'{e}, S.~R. Fenley, and R.~Potrie.
\newblock Collapsed {A}nosov flows and self orbit equivalences.
\newblock {\em Comment. Math. Helv.}, 98(4):771--875, 2023.

\bibitem[BH95]{BestHand}
M.~Bestvina and M.~Handel.
\newblock Train-tracks for surface homeomorphisms.
\newblock {\em Topology}, 34(1):109--140, 1995.

\bibitem[Bon83]{Bon}
F.~Bonahon.
\newblock Cobordism of automorphisms of surfaces.
\newblock {\em Ann. Sci. \'{E}cole Norm. Sup. (4)}, 16(2):237--270, 1983.

\bibitem[CRHRHU18]{CHHU}
P.~D. Carrasco, F.~Rodriguez-Hertz, J.~Rodriguez-Hertz, and R.~Ures.
\newblock Partially hyperbolic dynamics in dimension three.
\newblock {\em Ergodic Theory Dynam. Systems}, 38(8):2801--2837, 2018.

\bibitem[CW06]{CW}
D.~Cooper and G.~S. Walsh.
\newblock Virtually {H}aken fillings and semi-bundles.
\newblock {\em Geom. Topol.}, 10:2237--2245, 2006.

\bibitem[FLP79]{FLP}
A.~Fathi, F.~Laudenbach, and V.~Po\'{e}naru, editors.
\newblock {\em Travaux de {T}hurston sur les surfaces}, volume~66 of {\em
  Ast\'{e}risque}.
\newblock Soci\'{e}t\'{e} Math\'{e}matique de France, Paris, 1979.
\newblock S\'{e}minaire Orsay, With an English summary.

\bibitem[Jac80]{Jaco}
W.~Jaco.
\newblock {\em Lectures on three-manifold topology}, volume~43 of {\em CBMS
  Regional Conference Series in Mathematics}.
\newblock American Mathematical Society, Providence, R.I., 1980.

\bibitem[Joh79]{Jo}
K.~Johannson.
\newblock {\em Homotopy equivalences of {$3$}-manifolds with boundaries},
  volume 761 of {\em Lecture Notes in Mathematics}.
\newblock Springer, Berlin, 1979.

\bibitem[JS79]{JS}
W.~H. Jaco and P.~B. Shalen.
\newblock Seifert fibered spaces in {$3$}-manifolds.
\newblock {\em Mem. Amer. Math. Soc.}, 21(220):viii+192, 1979.

\bibitem[McC86]{Mc}
D.~McCullough.
\newblock Mappings of reducible {$3$}-manifolds.
\newblock In {\em Geometric and algebraic topology}, volume~18 of {\em Banach
  Center Publ.}, pages 61--76. PWN, Warsaw, 1986.

\bibitem[McC94]{McC}
J.~McCarthy.
\newblock Normalizers and centralizers of pseudo-{A}nosov mapping classes,
  1994.
\newblock Preprint,
  \url{https://users.math.msu.edu/users/mccarthy/publications/normcent.pdf}.

\bibitem[NW19]{NeoWang}
C.~Neofytidis and S.~Wang.
\newblock Invariant incompressible surfaces in reducible 3-manifolds.
\newblock {\em Ergodic Theory Dynam. Systems}, 39(11):3136--3143, 2019.

\bibitem[Pen88]{Penner}
R.~C. Penner.
\newblock A construction of pseudo-{A}nosov homeomorphisms.
\newblock {\em Trans. Amer. Math. Soc.}, 310(1):179--197, 1988.

\bibitem[RHRHU11]{RHRHU}
F.~Rodriguez~Hertz, M.~A. Rodriguez~Hertz, and R.~Ures.
\newblock Tori with hyperbolic dynamics in 3-manifolds.
\newblock {\em J. Mod. Dyn.}, 5(1):185--202, 2011.

\bibitem[Sch04]{Sch04}
M.~Scharlemann.
\newblock Automorphisms of the 3-sphere that preserve a genus two {H}eegaard
  splitting.
\newblock {\em Bol. Soc. Mat. Mexicana (3)}, 10(Special Issue):503--514, 2004.

\bibitem[Str17]{Strenner}
B.~Strenner.
\newblock Algebraic degrees of pseudo-{A}nosov stretch factors.
\newblock {\em Geom. Funct. Anal.}, 27(6):1497--1539, 2017.

\bibitem[Thu88]{Th2}
W.~P. Thurston.
\newblock On the geometry and dynamics of diffeomorphisms of surfaces.
\newblock {\em Bull. Amer. Math. Soc. (N.S.)}, 19(2):417--431, 1988.

\bibitem[Wal68]{Waldhausen}
F.~Waldhausen.
\newblock On irreducible {$3$}-manifolds which are sufficiently large.
\newblock {\em Ann. of Math. (2)}, 87:56--88, 1968.

\end{thebibliography}
\end{document}